\def \tt#1{\text{\rm #1}}
\def \etale {\'{e}tale }
\def \etalee {\'{e}tale}
\def \ab {\tt{ab}}
\def \log {\tt{log}}
\def \Hom {\tt{Hom}}
\def \Aut {\tt{Aut}}
\def \Inn {\tt{Inn}}
\def \Out {\tt{Out}}
\def \Rank {\tt{Rank}}
\def \tf {\tt{tf}}
\def \cycl {\tt{cycl}}
\def \Z {\mathbb{Z}}
\def \G {\mathbb{G}}
\def \Zhat {\hat{\mathbb{Z}}}
\def \Q {\mathbb{Q}}
\def \C {\mathcal{C}}
\def \D {\mathcal{D}}
\def \F {\mathcal{F}}
\def \abt {\text{\rm ab-t}}
\def \sp {\tt{Spec}}
\def \cusp {\text{\rm Cusp}}
\def \id {\tt{Id}}
\def \T {\tt{T}}
\def \tor {\tt{tor}}
\def \abe {\tt{abel}}
\def \Pic {\tt{Pic}}
\def \Det {\tt{Det}}
\def \Cusp {\text{\rm Cusp}}
\def \diag {\tt{diag}}
\def \card {\tt{Card}}
\def \ob {\tt{ob}}
\def \hom {\tt{Hom}}
\def \dom {\tt{dom}}
\def \cod {\tt{cod}}
\def \und {\tt{grph}}
\def \set {\tt{set}}
\def \gal {\tt{Gal}}
\def \base {\tt{Base}}
\def \pr {\tt{pr}}
\def \surjto {\twoheadrightarrow}
\def \injto {\hookrightarrow}
\def \cl {\tt{cpt}}
\def \comm {\mathcal{G}} 
\def \simto {\xrightarrow{\sim}}
\def \sx {\mathbb{S}_{3,X}}
\def \sxx {\mathbb{S}_{4,X}}
\def \sxxx {\mathbb{S}_{5,X}}
\def \vx {\mathbb{V}_{X}}
\crefname{section}{\S}{\S}
\Crefname{section}{\S}{\S}
\crefname{theo}{Theorem}{Theorem}
\Crefname{theo}{Theorem}{Theorem}
\crefname{defi}{Definition}{Definition}
\Crefname{defi}{Definition}{Definition}
\crefname{prop}{Proposition}{Proposition}
\Crefname{prop}{Proposition}{Proposition}
\crefname{lemm}{Lemma}{Lemma}
\Crefname{lemm}{Lemma}{Lemma}
\crefname{coro}{Corollary}{Corollary}
\Crefname{coro}{Corollary}{Corollary}
\crefname{remm}{Remmark}{Remmark}
\Crefname{remm}{Remmark}{Remmark}
\crefname{thma}{Theorem A}{Theorem A}
\Crefname{thma}{Theorem A}{Theorem A}
\theoremstyle{theorem}
\newtheorem{lemm}{Lemma}[section]
\newtheorem{theo}[lemm]{Theorem}
\newtheorem{prop}[lemm]{Proposition}
\newtheorem{coro}[lemm]{Corollary}
\newtheorem*{thma}{Theorem A}
\newtheorem*{thma'}{Theorem A$'$}
\newtheorem*{thmb}{Theorem B}
\newtheorem*{thmc}{Theorem C}
\newtheorem*{thmc'}{Theorem C$'$}
\theoremstyle{remark}
\newtheorem{remm}[lemm]{Remark}
\theoremstyle{definition}
\newtheorem{defi}[lemm]{Definition}
\newcommand{\longtitle}[1]{
	\ifodd\value{page}
	\protect\parbox{0.97\linewidth}{#1}
	\else
	\fi
}
\title[The Absolute Anabelian Geometry of Virtual Curves]{The Absolute Anabelian Geometry of Virtual Curves Arising from Sections of Arithmetic Fundamental Groups of Configuration Spaces}
\author{Zeming Sun}
\begin{document}

\maketitle


\section{Introduction}\label{Introductions}

The objective of this paper is to introduce the notions of \emph{virtual varieties} [cf. \cref{l41}] and \emph{pointed virtual curves} [cf. \cref{l23}] (a special case of the notion of a virtual variety), and to study various anabelian reconstructions related to pointed virtual curves, especially, pointed virtual curves that arise from $2$-configuration spaces [cf. \cref{Notations and Terminologies}, the discussion entitled ``Configuration Spaces'']. Roughly speaking, given a family of pointed curves, the notion of a pointed virtual curve is a generalization of the notion of a fiber of that family over a rational point, where the rational point is replaced by a Galois section of the \etale fundamental group of the base space of the family.



Under suitable assumptions, if the Galois section arises from a rational point $x$, then the virtual fundamental group associated to the pointed virtual curve is naturally isomorphic, up to composition with inner automorphisms, to the \etale fundamental group of the fiber of $x$ [cf. \cref{l01}]. Thus one may regard pointed curves as the special class of pointed virtual curves for which the given section is geometric. In particular, if one considers the family of pointed curves associated to a $2$-configuration space, then one may regard the pointed virtual curve that arises from a section as the ``cuspidalization'' of that section. 

In fact, our main interest lies in the family of pointed curves associated to a $2$-configuration space. Roughly speaking, we shall consider various anabelian reconstructions associated to the abstract virtual fundamental group
\begin{displaymath}
\Pi_{[\pr^{2/1}_{X},s]}
\end{displaymath}
[cf. \cref{l41}; \cref{Notations and Terminologies}, the discussion entitled ``Configuration Spaces''] that arises from the $2$-configuration space of a curve $X$ satisfying certain
conditions and a given Galois section $s:G_{k}\to \Pi_{X}$.


Our main result is as follows:


\begin{thma}\label{thma}
	([cf. Theorem 6.14(iii); Theorem 6.15(vi)(v)(vi)])
	
	Let $k$ be either a number field or a mixed-characteristic local field; $\bar{k}$ an algebraic closure of $k$; $G_{k}$ the Galois group $\gal(\bar{k}/k)$; $X$ a smooth, geometrically connected curve over $k$ of type $(0,r)$, where $r\geq 3$; $s:G_{k}\rightarrow \Pi_{X}$ a section of the natural surjection $\Pi_{X}\surjto G_{k}$. Write $\pr^{2/1}_{X}:X_{2}\rightarrow X$ for the first projection; $\mathbf{F}$ for the category of fields; $\Pi_{s}$ for the virtual fundamental group $\Pi_{[\pr^{2/1}_{X},s]}$ [cf. \cref{l41}; \cref{Notations and Terminologies}, the discussion entitled ``Configuration Spaces''].
	
    (i) One may reconstruct a field 
        \begin{displaymath}
            \base(\Pi_{s})	
	\end{displaymath}
	from the abstract profinite group $\Pi_{s}$ functorially with respect to isomorphisms of topological groups such that the action of $\Pi_{s}$ on $\base(\Pi_{s})$ functorially induced by the conjugation action of $\Pi$ on itself factors through the quotient $\Pi_{s}\surjto G_{k}$. Moreover, there exists a $G_{k}$-equivariant isomorphism $\base(\Pi_{s})\simto \bar{k}$.
	
	(ii) The isomorphism class of the function field $K(X)$ is determined by the isomorphism class of the abstract profinite group $\Pi_{s}$.
	
	(iii) Suppose that $r=3$. Then the isomorphism class of the scheme $X$ is determined by the isomorphism class of the abstract profinite group $\Pi_{s}$.
\end{thma}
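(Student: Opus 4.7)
The plan is to combine parts (i) and (ii) with a group-theoretic reconstruction of the cuspidal divisor of $X$ inside its smooth compactification. By (i), we recover the base field $k$; by (ii), the function field $K(X)$. Since $X$ is of type $(0,3)$, the smooth compactification $X^{\cl}$ is the unique smooth projective model of $K(X)/k$, hence is reconstructed as a scheme over $k$ (with $X^{\cl}_{\bar{k}}\cong\mathbb{P}^{1}_{\bar{k}}$). It therefore suffices to reconstruct the cuspidal divisor $D_{X}:=X^{\cl}\setminus X$ --- a reduced, $G_{k}$-stable, degree-$3$ closed subscheme of $X^{\cl}$ --- group-theoretically from $\Pi_{s}$.

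From the fiber-product definition $\Pi_{s}=\Pi_{X_{2}}\times_{\Pi_{X},s}G_{k}$ we have the exact sequence
\begin{displaymath}
1\longrightarrow\Delta_{F}\longrightarrow\Pi_{s}\longrightarrow G_{k}\longrightarrow 1,
\end{displaymath}
where $\Delta_{F}$ is the geometric fundamental group of a fiber of $\pr^{2/1}_{X}$, a curve of type $(0,4)$. Hence $\Delta_{F}$ is a free profinite group of rank $3$ carrying exactly $4$ conjugacy classes of cuspidal inertia subgroups: three arising from the cusps of $X$ and one ``diagonal'' class corresponding to the puncture tracked by $s$. Standard anabelian characterizations of cuspidal inertia in affine hyperbolic curves (via weight-filtration or commensurator arguments) identify these four classes group-theoretically in $\Pi_{s}$, together with the induced $G_{k}$-action on the set of classes. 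The diagonal class is then singled out by exploiting the fact that, geometrically, the divisor $\Delta\subset X\times X$ is a section of \emph{both} projections $X\times X\to X$, whereas the cuspidal divisors of $X_{2}$ corresponding to the cusps of $X$ are sections only of the first. Translating this into a group-theoretic criterion intrinsic to $\Pi_{s}$ uses the reconstruction of the second projection $X_{2}\to X$ (at the level of fundamental groups), carried out earlier in the paper: the diagonal class is then characterized as the unique cuspidal class whose decomposition subgroup is mapped under the reconstructed second projection into the image of $s$ in $\Pi_{X}$ (up to conjugacy). Discarding it yields a $G_{k}$-set of cardinality $3$.

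Finally, each of these three cuspidal inertia classes determines --- via its decomposition subgroup in $\Pi_{s}$ and the reconstructed function field $K(X)/k$ --- a valuation of $K(X)$ trivial on $k$, equivalently a closed point of $X^{\cl}_{\bar{k}}$; the union of the resulting $G_{k}$-orbits is the cuspidal divisor $D_{X}$, and $X=X^{\cl}\setminus D_{X}$ is thereby reconstructed as a scheme from $\Pi_{s}$. The main obstacle is the group-theoretic isolation of the diagonal cuspidal inertia class: in particular, when the Galois action on the cusps of $X$ is trivial, Galois-theoretic considerations alone cannot distinguish the four classes, and the argument requires essential use of the asymmetry between the two projections of $X_{2}\to X$ encoded in the fiber-product structure of $\Pi_{s}$.
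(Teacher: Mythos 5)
Your proposal only engages seriously with part (iii): parts (i) and (ii) are invoked as black boxes, whereas in the paper they are the bulk of the work (the construction of the graph of $\binom{r+1}{3}$ copies of $K(X)$ with compatible isomorphisms in \cref{l52}(ii), the extraction of $k_\Pi$ and of $\bar{k}$ with its $G_k$-action in \cref{l52}(iii), and the case analysis on the Galois orbit structure of the cusps in the proof of \cref{l74}(iv)(v)). That omission aside, the decisive problem is in your treatment of (iii).

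The step that fails is the claimed group-theoretic isolation of the diagonal cuspidal class. You propose to characterize $\diag_X$ as the unique class whose decomposition group is carried into $s(G_k)$ by ``the reconstructed second projection.'' But the second projection $(\pr^{1\backslash 2}_X)_*$ restricted to $\Pi_s$ is \emph{not} part of the data of the abstract profinite group $\Pi_s$, is never reconstructed in the paper, and in fact \emph{cannot} be reconstructed functorially: by \cref{l47}(vi)(vii) and \cref{l49}(v), the Klein four-group $\vx\subset\sxx$ acts on $X_2=\mathcal{M}_{0,5}$ \emph{over} $X=\mathcal{M}_{0,4}$ (with respect to the first projection), hence induces automorphisms of the abstract group $\Pi_s$ that permute the four cuspidal classes \emph{simply transitively} whenever the $G_k$-action on $\cusp([\pr^{2/1}_X,s])$ is trivial. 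Consequently no invariant functorial in the topological group $\Pi_s$ can single out $\diag_X$ among the four classes in that case; any ``second projection'' you produce from $\Pi_s$ is only well defined up to this Klein-group ambiguity, and different choices designate different classes as the diagonal. The paper states this obstruction explicitly just before \cref{l47} (``one cannot group-theoretically characterize the element $\diag_X$''). Your closing remark that the argument ``requires essential use of the asymmetry between the two projections encoded in the fiber-product structure of $\Pi_s$'' concedes exactly the point: that asymmetry is auxiliary scheme-theoretic data, not data of the abstract group, so appealing to it is circular relative to what the theorem asserts.

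The paper's resolution is the opposite of yours: rather than isolating the diagonal, it shows the choice does not matter. In the proof of \cref{l74}(vi) one splits on the cardinality of the set $E$ of non-Galois-fixed classes. If $\card(E)=0$, then by \cref{l49}(iv) $X\cong\mathbb{P}^1_k\setminus\{0,1,\infty\}$, so the scheme is already determined by $k$ (recovered in (iv)) with no need to locate the diagonal; if $\card(E)=2$, the Klein-group automorphisms are used to normalize the bijection of cuspidal classes so that it sends $\diag_X$ to $\diag_{X'}$, after which \cref{l48} and \cref{l72} apply; if $\card(E)=3$, the diagonal is the unique fixed class and is preserved automatically. If you want to salvage your argument, you must replace the ``unique class detected by the second projection'' criterion with this symmetry argument (or something equivalent); as written, the criterion does not exist at the level of generality the theorem requires.
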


In the course of proving Theorem A, we prove various intermediate results,
some of which are of interest independently of Theorem A since they may be proven under \emph{weaker hypotheses} than the hypotheses of Theorem A.

One such intermediate result is the following Theorem B, which may be
regarded as a partial generalization to the case of pointed virtual curves 
of \cite{MZK3}, Lemma 4.5.  The technique of proof of Theorem B is similar to that of \cite{MZK3}, Lemma 4.5.

\begin{thmb}
	([cf. \cref{l24}, \cref{l19}(ii), \cref{l34}, \cref{l39}])
	
	Let $k$ be a field of characteristic zero; $\C$ a pointed virtual curve [cf. \cref{l23}] of type $(g,r)$ with source $X$ and base $k$; $\bar{k}$ an algebraic closure of $k$; $G_{k}$ the Galois group $\gal(\bar{k}/k)$. Assume that $r\geq 1$.
	
	(i) The geometric fundamental group $\Delta_{\C}$, considered as a subgroup of $\Pi_{X}$, is \emph{independent} of the choice of $s$.
	
	(ii) Let $l$ be any prime such that $k$ is $l$-cyclotomically full [cf. \cref{l43}(ii)]. Consider the determinant map $\Det_{\Delta_{\C}}^{l}:\Pi_{\C}\rightarrow (\mathbb{Z}_{l}^{\times})^{\tf}$ [cf. \cref{l43}(iii)]. The homomorphism $(\Det_{\Delta_{\C}}^{l})^{\abt}:(\Pi_{\C})^{\abt}\rightarrow (\mathbb{Z}_{l}^{\times})^{\tf}$ induced on torsion-free quotients factors through $\Pi_{\C}\rightarrow G_{k}$, hence may be considered as a character of $G_{k}$. Then $ w^{l}_{(\cycl_{G_{k}})^{\abt}}((\Det_{\Delta_{\C}})^{\abt})$ [cf. \cref{l43}(i)] exists and is \emph{independent} of the section $s$.
	
	(iii) Suppose that there exists at least one $l$ that satisfies the condition in (ii). Then one may reconstruct the set of conjugacy classes of \emph{cuspidal inertia subgroups} $\Cusp(\C)$ [cf. \cref{l27}] of $\Delta_{\C}$.
\end{thmb}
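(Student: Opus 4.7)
The plan is to prove the three parts sequentially, adapting techniques from \cite{MZK3}, Lemma 4.5.

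For part (i), I would unpack the definition of a pointed virtual curve from \cref{l23}: $\C$ arises from a morphism $f: X \to Y$ of $k$-schemes with source $X$, together with a Galois section $s: G_k \to \Pi_Y$. The virtual fundamental group $\Pi_\C$ is then realized as the pullback $\Pi_X \times_{\Pi_Y} G_k$, and $\Delta_\C = \ker(\Pi_\C \to G_k)$ is canonically identified with $\ker(\Pi_X \to \Pi_Y)$, a subgroup of $\Pi_X$ that depends only on $f$ and not on $s$. Part (i) then follows from this identification.

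For part (ii), I would first show that $(\Det^l_{\Delta_\C})^{\abt}$ factors through $G_k$: any $\delta \in \Delta_\C$ acts on $\Delta_\C^{\ab, l, \tf}$ by inner automorphisms (via conjugation), hence trivially on the top exterior power, so the determinant character kills $\Delta_\C$ in the torsion-free abelianization. Existence of the weight is provided by the $l$-cyclotomic fullness assumption via \cref{l43}(i). For independence from $s$: by (i), the $\Z_l$-module $\Delta_\C^{\ab, l, \tf}$ is $s$-independent; although the $G_k$-action induced by $s$ via conjugation in $\Pi_\C$ may vary with $s$, its cyclotomic weight on the top exterior power is pinned down by the intrinsic cuspidal inertia contributions (each of cyclotomic weight $1$) together with the rank dictated by the type $(g, r)$ of $\C$ -- both numerical invariants of the underlying family structure.

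For part (iii), following the technique of \cite{MZK3}, Lemma 4.5, I would characterize conjugacy classes of cuspidal inertia subgroups of $\Delta_\C$ as certain pro-cyclic subgroups of an appropriate pro-$l$ quotient whose image in $\Delta_\C^{\ab, l}$ is of cyclotomic weight $1$ under the natural $G_k$-action, with total multiplicity consistent with the determinant weight from (ii). The main obstacle I anticipate lies here: making this characterization purely intrinsic to the abstract group $\Pi_\C$. Unlike the geometric setting of \cite{MZK3}, Lemma 4.5, in the virtual setting there is no direct access to a genuine fiber, and in particular the ``extra cusp'' arising from the section $s$ (in the $2$-configuration space case) must be detected from the group-theoretic structure alone. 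The $s$-independence results established in (i) and (ii) are precisely the invariants that make this detection well-defined, allowing one to reconstruct $\Cusp(\C)$.
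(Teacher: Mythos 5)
Your part (i) coincides with the paper's argument (\cref{l24}): $\Delta_{\C}=\ker(f_{*}:\Pi_{X}\to\Pi_{Y})$, which visibly does not involve $s$. The gaps are in (ii) and (iii). In (ii), the sentence asserting that the weight is ``pinned down by the intrinsic cuspidal inertia contributions together with the rank dictated by the type'' is exactly the statement to be proved, and no mechanism is offered. The paper's mechanism has two ingredients you omit. First, since $\Delta_{\C}$ is normal in all of $\Pi_{X}$ by (i), the character $\Det^{l}_{\Delta_{\C}}$ on $\Pi_{\C}$ is the restriction of a character defined on $\Pi_{X}$ itself; combined with the identification of $\Delta_{\C}$ with the geometric fundamental group of the generic fiber (\cref{l04}), this reduces everything to the case of an honest curve over the function field $K(Y)$, where no section intervenes. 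Second, even there, the rank of the ``genus part'' does not determine its weight: one needs the Albanese variety and the Weil pairing (\cref{l05}, \cref{l06}, \cref{l15}) to show that the abelian quotient of rank $2g$ has total weight $2g$ (self-duality), while the toric part of rank $r-1$ has weight $2r-2$. Your parenthetical claim that the cuspidal contributions each have cyclotomic weight also presupposes that an arbitrary, possibly non-geometric, section acts cyclotomically on inertia images; that is a consequence of, not an input to, the reduction just described.

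For (iii), the characterization you propose --- procyclic subgroups whose image in $\Delta_{\C}^{\mathrm{ab}}$ has cyclotomic weight --- fails already when $g=0$: there the whole of $\Delta_{\C}^{\mathrm{ab}}$ is generated by cuspidal inertia images, and when $G_{k}$ fixes the cusps it acts on all of it by the cyclotomic character, so every procyclic subgroup with nontrivial abelianized image passes the test while almost none is an inertia subgroup. The paper's criterion (\cref{l27}) is instead a ramification count over all open subgroups: $I\simeq\hat{\mathbb{Z}}$ is pre-cuspidal iff $r(J)<[IJ:J]\,r(IJ)$ for every open $J\subset\Delta_{\C}$ with $J\neq IJ$, i.e., $I$ forces ramification in every subcovering. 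Making this group-theoretic requires recovering the type of every open subgroup (\cref{l34}), which in turn requires reconstructing the cyclotomic character itself from the group (\cref{l22}, via the gcd of the weights over all \'etale coverings being $2$); none of this appears in your sketch, and you acknowledge the step is missing. Note finally that the ``extra cusp arising from the section'' that you flag as the main obstacle is not part of Theorem B(iii) at all: that statement reconstructs $\Cusp(\C)$ from the exact sequence $1\to\Delta_{\C}\to\Pi_{\C}\to G_{k}\to 1$ for a general pointed virtual curve, and the detection of the diagonal cusp in the configuration-space case is deferred to \cref{Automorphisms of 2-Configuration Spaces}.
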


Also, we may obtain a
\begin{displaymath}
\text{``semi-absolute-anabelian''}\Rightarrow \text{``absolute-anabelian''}
\end{displaymath}
type result:

\begin{thmc}
	([cf. \cref{l38}, \cref{l37}])
	
	Suppose that $k$ is either a number field or a mixed-characteristic local field; $X$ is a smooth, geometrically connected \emph{hyperbolic} curve over $k$. Let $\bar{k}$ be an algebraic closure of $k$; $G_{k}$ the Galois group $\gal(\bar{k}/k)$; $s:G_{k}\rightarrow \Pi_{X}$ a section of the natural surjection $\Pi_{X}\surjto G_{k}$. Then the exact sequence 
	\begin{displaymath}
	1\rightarrow \Delta_{[\pr^{2/1}_{X},s]}\rightarrow \Pi_{[\pr^{2/1}_{X},s]}\rightarrow G_{k}\rightarrow 1
	\end{displaymath}
	may be reconstructed group-theoretically from $\Pi_{[\pr^{2/1}_{X},s]}$. In particular, the group $\Delta_{[\pr^{2/1}_{X},s]}$ is \emph{characteristic} in $\Pi_{[\pr^{2/1}_{X},s]}$.
\end{thmc}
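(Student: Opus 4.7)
The plan is to reduce the theorem to the absolute-anabelian reconstruction of the Galois exact sequence for arithmetic fundamental groups of hyperbolic curves over number fields or mixed-characteristic local fields, which is a theorem of Mochizuki. By construction, $\Pi_{[\pr^{2/1}_X, s]}$ fits into a natural exact sequence
\begin{displaymath}
1 \to \Delta_{[\pr^{2/1}_X, s]} \to \Pi_{[\pr^{2/1}_X, s]} \to G_k \to 1,
\end{displaymath}
in which the kernel $\Delta_{[\pr^{2/1}_X, s]}$ is, by Theorem B(i), \emph{independent} of $s$ and isomorphic to the geometric fundamental group of a fibre of $\pr^{2/1}_X$. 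Since $X$ is hyperbolic of some type $(g,r)$, this fibre is hyperbolic of type $(g, r+1)$, so $\Delta_{[\pr^{2/1}_X, s]}$ is, abstractly, a profinite surface group of hyperbolic type.

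The reconstruction then proceeds in two main steps. First, I would apply Theorem B(iii) to recover the set of conjugacy classes of cuspidal inertia subgroups of $\Delta_{[\pr^{2/1}_X, s]}$ from the abstract profinite group $\Pi_{[\pr^{2/1}_X, s]}$; the hypothesis that $k$ be $l$-cyclotomically full for some prime $l$ is automatic, since $k$ is a number field or a mixed-characteristic local field. Second, I would transport Mochizuki's absolute-anabelian argument for hyperbolic curves to the virtual setting: the recovered cuspidal inertia subgroups, together with arithmetic features of $G_k$ peculiar to the number field or mixed-characteristic local field case (cohomological dimension, structure of decomposition groups, the cyclotomic character, etc.), should pin down $\Delta_{[\pr^{2/1}_X, s]}$ as a characteristic closed normal subgroup of $\Pi_{[\pr^{2/1}_X, s]}$. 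Once $\Delta_{[\pr^{2/1}_X, s]}$ is so identified group-theoretically, the quotient $G_k$ and hence the entire exact sequence are determined, and the ``in particular'' assertion follows at once.

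The main obstacle is that, when the section $s$ is not geometric, $\Pi_{[\pr^{2/1}_X, s]}$ need not be literally isomorphic to the arithmetic fundamental group of any hyperbolic curve over $k$, so one cannot invoke Mochizuki's theorem as a black box. The hard step is therefore to verify that the reconstruction argument uses only profinite-group-theoretic data --- namely, the action on cuspidal inertia subgroups, the cyclotomic character, and the arithmetic properties of $G_k$ --- rather than any genuine scheme-theoretic input on $X_2$. Given that Theorem B already supplies the recovery of cuspidal inertia without assuming geometricity of $s$, this verification reduces to carefully re-running Mochizuki's absolute-anabelian argument within the virtual framework; I expect this to be the content of the references \cref{l38} and \cref{l37}.
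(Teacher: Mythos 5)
There is a genuine gap, and it occurs exactly at the step you defer to ``re-running Mochizuki's argument.'' First, your step 1 is circular: Theorem B(iii) (i.e., \cref{l39}(ii)) reconstructs $\Cusp(\C)$ \emph{from the exact sequence} $1\to\Delta_{\C}\to\Pi_{\C}\to G_{k}\to 1$, so it takes the subgroup $\Delta_{[\pr^{2/1}_{X},s]}$ as input; you cannot use it as the first step of an argument whose goal is to locate that subgroup inside the abstract group $\Pi_{[\pr^{2/1}_{X},s]}$. Second, the paper's actual mechanism involves neither cuspidal inertia subgroups nor Mochizuki's absolute anabelian geometry of curves at this stage, and it splits into two cases that your sketch does not distinguish. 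In the number field case the statement is essentially trivial: $\Delta_{\C}$ is topologically finitely generated (\cref{l19}(i)) and $G_{k}$ is \emph{very elastic}, so $\Delta_{\C}$ is the unique maximal topologically finitely generated closed normal subgroup of $\Pi_{\C}$ (\cref{l38}). In the mixed-characteristic local case this fails because $G_{k}$ is itself topologically finitely generated, and this is where all the work lies.

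The paper's resolution of the obstacle you correctly identify (that $\Pi_{[\pr^{2/1}_{X},s]}$ need not be the arithmetic fundamental group of any curve when $s$ is not geometric) is not to transport an anabelian argument, but to transport a \emph{numerical invariant}: for open normal subgroups $H$ of ``admissible type,'' the quantity $\Rank_{p}(H^{\abt})-\Rank_{l}(H^{\abt})$ computes $[k_{H}:\Q_{p}]$ (\cref{l32}). This works because the decuspidalization $[\pr^{2/1}_{X},s]\to[\pr^{1}_{X},s]$ induces an isomorphism on $(-)^{\abt}$ (\cref{l29}(iii), \cref{l30}(iii)(iv)), and $\Pi_{[\pr^{1}_{X},s]}$ \emph{is} literally isomorphic to the arithmetic fundamental group of an honest curve (\cref{l33}(i)), where the rank formula of local class field theory applies. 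One then defines subgroups of ``field extension type'' by the equality of this invariant with the index, shows every such subgroup contains $\Delta_{[\pr^{2/1}_{X},s]}$, and characterizes $\Delta_{[\pr^{2/1}_{X},s]}$ as the maximal topologically finitely generated closed normal subgroup contained in all of them, using elasticity of $G_{k}$ and the infinitude of $G_{k}^{\abt}$ to rule out a nontrivial image in $G_{k}$ (\cref{l37}). None of this is recoverable from the outline you give, so the proposal as written does not constitute a proof.
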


This paper is organized as follows: In \cref{Virtual Curves}, introduce the notion of a virtual curve and discuss some basic properties of virtual curves. In \cref{Weights of Determinant Maps}, we prove Theorem B by considering the Weil pairing associated to the Albanese variety [cf. \cite{MZK3}, Appendix] of the curve under consideration and deduce a generalization of \cite{MZK3}, Lemma 4.5. In \cref{Characterization of Geometric Fundamental Groups}, we discuss various properties of configuration spaces and prove Theorem C by using a ``stricter'' family of open subgroups than the one in \cite{MZK6}, Lemma 1.1.4. In \cref{Automorphisms of 2-Configuration Spaces}, we discuss how to ``decuspidalize'' the diagonal of configuration space so that we may reduce to the case of a fundamental group of a curve of strictly Belyi type and, by applying methods in \cite{MZK2}, Chapter 1, reconstruct various copies of the function field of the curve, thus proving Theorem A.

The notion of a pointed virtual curve may be regarded as a partial generalization of the notion of a \emph{hyperbolic curvoid} in \cite{Hos4}.

\section{Notations and Terminologies}\label{Notations and Terminologies}

~\\
\textbf{Groups and Topologies:}

Throughout this paper, all maps between topological spaces and all morphisms between profinite groups are to be understood as \emph{continuous} without further mention. We shall regard groups not equipped with a topology as topological groups whose topology is the discrete topology.

The notation $1$ will be used to denote the trivial group.

Let $G$ be a profinite group; $\Sigma$ a non-empty set of prime numbers; $n$ an integer. A \emph{$\Sigma$-integer} is defined to be integer whose prime divisors are contained in $\Sigma$. We shall refer to the quotient (whose kernel is easily verified to be a characteristic closed subgroup)

\begin{displaymath}
G \surjto \varprojlim G/H
\end{displaymath}
of $G$, where $H$ ranges over the \emph{normal} open subgroups of $G$ such that $\left[ G:H\right]$ is a $\Sigma$-integer, as the \emph{maximal pro-$\Sigma$ quotient} of $G$, which we denote by $G^{(\Sigma)}$.


Let $G$ be a profinite group. We say that $G$ is \emph{elastic} if every nontrivial topologically finitely generated closed
normal subgroup of an open subgroup of $G$ is itself open.

Denote by $G^{\tf}$ the the maximal torsion-free Hausdorff quotient of an abelian topological group $G$. Denote by $G^{\ab}$ the maximal abelian Hausdorff quotient of a topological group $G$. Write $G^{\abt}\coloneqq(G^{\ab})^{\tf}$. Note that if $G$ is a profinite group (respectively, $G$ is an abelian profinite group), then $G^{\ab}$ (respectively, $G^{\tf}$) admits a naturally induced profinite structure. If $f:G\rightarrow H$ is a morphism of profinite groups, we denote the naturally induced morphism by $f^{\abt}:G^{\abt}\rightarrow H^{\abt}$.

Let $G$ be a topological group; $A$ a topological $G$-module. Then Denote by $A_{G}$ the maximal Hausdorff quotient of $A$ on which $G$ acts trivially; denote by $A^{G}$ the topological submodule of $A$ consisting of the elements fixed by $G$.

Let $n$ be a positive integer. Then denote by $\mathbb{S}_{n}$ the permutation group on $n$ letters.

Let $H$ be a subgroup of finite index of a group $G$. Then denote by $[G:H]$ the index of $H$ in $G$.

Let $G$ be a topological group. Then denote by $\Aut(G)$ the group of automorphisms of $G$; denote by $\Inn(G)$ the group of inner automorphisms of $G$; denote by $\Out(G)$ the group of outer automorphisms of $G$ (i.e., the quotient $\Aut(G)/\Inn(G)$). Suppose further that $G$ is \emph{topologically finitely generated}. Then $G$ admits a topological basis of characteristic open subgroups, which induces a natural topology on $\Aut(G)$ and $\Out(G)$.

Let $S$ be a set. Then denote by $\Aut_{\set}(S)$ the group of automorphisms of $S$; denote by $\card(S)$ the cardinality of $S$.

~\\
\textbf{Schemes and Curves:}

Let $A$ be a ring. Then denote by $\sp(A)$ the spectrum of $A$.

Let $X$, $Y$ be reduced connected schemes. We say that a morphism $f:Y\to X$ of schemes is a \emph{family of curves} if there exist smooth morphisms $g:Z\to X$, $i:Y\to Z$ of schemes, where $g$ is proper and geometrically connected of relative dimension one, and $i$ is an open immersion with dense image, such that $f=g\circ i$, and the complement (equipped with the reduced induced scheme structure) of the image of $Y$ in $Z$ (which may be empty) is finite \etale over $X$.


Let $k$ be a field. We say that a scheme $X$ is a \emph{curve} over $k$ if $X\to \sp(k)$ is a family of curves. 

Let $X$ be an integral scheme. Then denote by $K(X)$ the function field of $X$.

Let $X$ be a smooth curve over a field $k$; $\bar{k}$ an algebraic closure of $k$. Then denote by $X^{\cl}$ the (unique) smooth compactification of $X$. Write $\bar{X}\coloneqq X\times_{k} \bar{k}$, $\bar{X}^{\cl}\coloneqq X^{\cl}\times_{k} \bar{k}$. Write $g(X)$ for the genus of $\bar{X}^{\cl}$; $r(X)$ for the cardinality of the set of points in $\bar{X}^{\cl}\setminus\bar{X}$ (i.e., the set of \emph{geometric points} of $X^{\cl}\setminus X$). We shall refer to the pair $(g(X),r(X))$ as the \emph{type} of $X$. We say that a smooth curve (respectively, a family of curves) is \emph{hyperbolic} if it is of type $(g,r)$ (respectively, each of its fibers is of type $(g,r)$), where $2g+r\geq 3$.

We say that two connected schemes $X$ and $Y$ are \emph{isogenous} if there exists a connected scheme $Z$ such that there exist finite surjective \etale coverings $Z\rightarrow X$, $Z\rightarrow Y$.

We shall say that a curve $X$ is of \emph{strictly Belyi type} if $X$ is defined over a number field (i.e., obtained by base changing a curve over a number field) and isogenous to a smooth curve of type $(0,r)$ where $r\geq 3$.

Denote by $\G_{m,X}$ the multiplicative group scheme over a scheme $X$. If the base field $k$ is fixed, then we often abbreviate $\G_{m,k}$ as $\G_{m}$.

Let $k$ be a field, $A$ a group scheme over $k$. Then denote by $0_{A}$ the $k$-rational point representing the identity.

~\\
\textbf{Log Schemes:}

We refer to \cite{Kat1} for the definition of log schemes, log
\etale morphisms, and log smooth morphisms. We say that a log scheme $X^{\log}$ is \emph{log regular} if $X^{\log}$ satisfies the regularity condition in \cite{MZK4}, Definition 1.1. For a log scheme $X^{\log}$, we shall refer to the maximal open subscheme over which the log structure is trivial as the \emph{interior} of $X^{\log}$ and denote it by $(X^{\log})^{\circ}$.

Let $X$ be a scheme; $D$ a reduced closed subscheme; $X^{\log}$ a log scheme such that $X$ is the underlying scheme of $X^{\log}$. Then we say that $X^{\log}$ is induced by $D$ from $X$ if the log structure on $X^{\log}$ is defined by sub-monoid of $\mathcal{O}_{X}$ that consists of elements that are invertible outside $D$.


~\\
\textbf{Fundamental Groups:}

For a connected scheme $X$ with a geometric point $\bar{x}\rightarrow X$, denote by $\pi_{1}(X,\bar{x})$ the \etale fundamental group of $X$ with basepoint $\bar{x}$.

Throughout the paper, we omit the basepoint since there exists a natural isomorphism between fundamental groups associated to different basepoints which is well-defined up to an inner automorphism. We denote the \'{e}tale fundamental group of $X$ with basepoint omitted as $\Pi_{X}$. If $X$ is the spectrum of a field $k$, we also use the notation $G_{k}$ (i.e., the ``absolute Galois group of $k$'') to denote $\Pi_{X}$. Let $X$ be a scheme that is geometrically connected over a field $k$. Then denote by $\Delta_{X}$ the kernel of the natural surjection $\Pi_{X}\surjto G_{k}$, i.e., the \emph{geometric fundamental group}.

Any morphism $f:X\rightarrow Y$ of connected schemes induces an outer homomorphism $\Pi_{X}\rightarrow \Pi_{Y}$, which we denote by $f_{*}$.

Let $X^{\log}$ be a connected locally Noetherian fs log scheme [cf. \cite{Kat1}, 1.6]. Then denote by $\Pi_{X^{\log}}$ its log fundamental group [cf, \cite{Kat1}, Theorem 10.2, (3)] (again, up to an inner automorphism). Any morphism $f:X^{\log}\rightarrow Y^{\log}$ of connected locally Noetherian fs log schemes induces an outer homomorphism $\Pi_{X^{\log}}\rightarrow \Pi_{Y^{\log}}$, which we denote by $f_{*}$. Note that by \emph{log purity} [cf. \cite{MZK4}, Theorem B], the log fundamental group of a connected regular fs log scheme is isomorphic to the \etale fundamental group of its interior.

Throughout the paper, when we consider various morphisms and constructions of groups that arise from fundamental groups, without further explanation, we assume that those morphisms and constructions are only defined up to an inner automorphism.

~\\
\textbf{Configuration Spaces:}

Let $X$ be a variety over a field $k$. Define the \emph{2-configuration space} $X_{2}$ of $X$ to be the complement in $X\times_{k} X$ of the diagonal. The first (respectively, second) projection $\pr^{1}_{X}$ (respectively, $\pr^{2}_{X}$) from $X\times_{k} X$ to $X$ naturally restricts to $X_{2}$. Denote the restriction by $\pr^{2/1}_{X}$ (respectively, $\pr^{1\backslash 2}_{X}$).

~\\
\textbf{Numbers:}

Denote the set of positive integers by $\mathbb{N}^{+}$.

Denote the profinite completion (as an additive group) of $\mathbb{Z}$ by $\hat{\mathbb{Z}}$. Let $l$ be a prime number. Then denote by $\Z_{l}$ the pro-$l$ completion of $\Z$. The ring structure on $\Z$ naturally induces a ring structure on $\hat{\mathbb{Z}}$ and $\Z_{l}$. Note that $\Z_{l}$ is an integral domain. Denote by $\Q_{l}$ the function field of $\Z_{l}$. Write $\hat{\mathbb{Z}}^{\times}$ (respectively, $\Z_{l}^{\times}$) for the (topological) group of units of $\hat{\mathbb{Z}}$ (respectively, $\Z_{l}$). 

For positive integers $a,b$, denote by $\gcd(a,b)$ their greatest common divisor.

~\\
\textbf{Graphs and Categories:}

Throughout the paper, without further notification, all graphs will be \emph{directed} and allowed to have loops and multiple edges. Let $\mathfrak{G}$ be a graph. Then denote by $V(\mathfrak{G})$ the set of vertices. Denote by $E(\mathfrak{G})$ the set of edges. For any edge $e\in E(\mathfrak{G})$, denote by $h(e)$ (respectively, $t(e)$) the head (respectively, tail) of $e$.


Let $\mathbf{C}$ be a small category. Then denote by $\ob(\mathbf{C})$ (respectively, $\hom(\mathbf{C})$) the set of objects (respectively, morphisms) of $\mathbf{C}$. For any morphism $f \in\hom(\mathbf{C})$, denote by $\dom(f)$ (respectively, $\cod(f)$) the domain (respectively, codomain) of $f$. We shall refer to as a \emph{$\mathbf{C}$-graph} a collection of data $(\mathfrak{G},\mathbf{V},\mathbf{E})$, where $\mathfrak{G}$ is a graph; $\mathbf{V}:V(\mathfrak{G})\rightarrow \ob(\mathbf{C})$, $\mathbf{E}:E(\mathfrak{G})\rightarrow \hom(\mathbf{C})$ are maps of sets such that $\dom(\mathbf{E}(e))=\mathbf{V}(t(e))$, $\cod(\mathbf{E}(e))=\mathbf{V}(h(e))$.

Note that any small category $\mathbf{C}$ determines an \emph{associated graph} $\und(\mathbf{C})$, i.e., by replacing objects by vertexes, morphisms by edges, domains by tails, and codomains by heads. Thus, the notion of a $\mathbf{C}$-graph $(\mathfrak{G},\mathbf{V},\mathbf{E})$ is equivalent to a morphism of graphs $D_{(\mathfrak{G},\mathbf{V},\mathbf{E})}:\mathfrak{G}\to \und(\mathbf{C})$, which we refer to as the \emph{defining morphism} of $(\mathfrak{G},\mathbf{V},\mathbf{E})$.

Let $\mathbf{C}$ be a small category; $(\mathfrak{G},\mathbf{V},\mathbf{E})$, $(\mathfrak{G}',\mathbf{V}',\mathbf{E}')$ $\mathbf{C}$-graphs. Then we define a morphism
\begin{displaymath}
(f, \tau):(\mathfrak{G},\mathbf{V},\mathbf{E})\to (\mathfrak{G}',\mathbf{V}',\mathbf{E}')
\end{displaymath}
to be a collection of data $(f, \tau)$, where $f:\mathfrak{G}\rightarrow \mathfrak{G}'$ is a morphism of graphs, and $\tau:V(\mathfrak{G})\rightarrow \hom(\mathbf{C})$ is a morphism of sets such that for any vertex $v\in V(\mathfrak{G})$, $t(\tau(v))=\mathbf{V}(v)$, $h(\tau(v))=\mathbf{V}'(f(v))$, and for any edge $e\in E(\mathfrak{G})$, $\tau(h(e))\circ \mathbf{E}(e)=\mathbf{E'}(f(e))\circ \tau(t(e))$. Thus, a morphism of $\mathbf{C}$-graphs is a sort of ``natural transformation''.

Let $\mathbf{C}$ be a category; $A\in \ob(\mathbf{C})$ an object; $\mathfrak{G}$ a graph. Then define the \emph{constant $\mathbf{C}$-graph $A_{\mathfrak{G}}$ valued in $A$ over $\mathfrak{G}$} to be the $\mathbf{C}$-graph $(\mathfrak{G},\mathbf{V},\mathbf{E})$, where $\mathbf{V}(v)=A$ for any vertex $v\in V(\mathfrak{G})$, and $\mathbf{E}(e)=\id_{A}$ for any edge $e\in E(\mathfrak{G})$.



\section{Virtual Curves}\label{Virtual Curves}

In anabelian geometry, one important problem is the \emph{cuspidalization problem}. Roughly speaking, the cuspidalization problem concerns the reconstruction of the \etale fundamental group $\Pi_{X\setminus x}$ from the abstract topological group $\Pi_{X}$, where $X$ is a (typically affine) geometrically connected smooth curve over a field $k$ and $x$ is a $k$-rational closed point.

Of course, if one is not given any data about the closed point $x$, one cannot expect to reconstruct $\Pi_{X\setminus x}$. In general, there are at least two ways to resolve this issue. One way is to replace the datum of an abstract group $\Pi_{X}$ by the datum of a section $s:G_{k}\to \Pi_{X}$ of the natural surjection $\Pi_{X}\surjto G_{k}$, i.e., which corresponds to the $k$-rational point $x$. The other way is to reconstruct the \emph{set of all possible cuspidalizations} (with some restrictions) rather than to reconstruct a specific one.

One approach to the cuspidalization problem is the method of \emph{configuration spaces}. Roughly speaking, if one may reconstruct the morphism $(\pr^{2/1}_{X})_{*}:\Pi_{X_{2}}\to \Pi_{X}$ [cf. \cref{Notations and Terminologies}, the discussion entitled ``Configuration Spaces''] from $\Pi_{X}$ (where we note that this is in general not possible without additional
assumptions), then one may reconstruct $\Pi_{X\setminus x}$ as the preimage of $s(G_{k})$ by $(\pr^{2/1}_{X})_{*}$, where $s:G_{k}\to \Pi_{X}$ is the section of the natural surjection $\Pi_{X}\surjto G_{k}$ that arises from the $k$-rational point $x$ [cf. \cref{l01}].

One problem with the approach discussed in the preceding paragraph
is that it depends on the auxiliary datum of the set of sections of
$\Pi_{X}\surjto G_{k}$ that arise from $k$-rational points. This state of affairs leads naturally to the idea that one should generalize the notion of cuspidalization so that it applies to arbitrary sections of $\Pi_{X}\surjto G_{k}$.
This point of view is realized in the following definition of the notion of a \emph{virtual variety}.


\begin{defi}\label{l41}\quad
	
	(i) A \emph{virtual variety} $\mathcal{C}$ is a collection of data $(X,Y,f,k,\bar{k},t,s)$, where
	\begin{itemize}
		\item $k$ is a field of \emph{characteristic zero}, $\bar{k}$ is an algebraic closure of $k$;
		\item $X$, $Y$ are connected separated \emph{normal} schemes of finite type over $k$ such that the natural morphism $t_{*}:\Pi_{Y}\to \gal(\bar{k}/{k})$ is surjective, where $t$ is the natural morphism $Y\to \sp(k)$;
		\item $f:X\rightarrow Y$ is a smooth $k$-morphism of $k$-schemes such that the naturally induced morphism $f_{*}:\Pi_{X}\rightarrow \Pi_{Y}$ is surjective;
		\item $s:\gal(\bar{k}/{k})\rightarrow \Pi_{Y}$ is a section of $t_{*}$.
	\end{itemize}
	We refer to $k$ as the \emph{base} of $\mathcal{C}$, $X$ as the \emph{source} of $\mathcal{C}$, $Y$ as the \emph{target} of $\mathcal{C}$, $f$ as the \emph{defining morphism} of $\mathcal{C}$, $t$ as the \emph{structure morphism} of $\mathcal{C}$, and $s$ as the \emph{section} of $\mathcal{C}$.
	
	(ii) Define the pull-back of the diagram
	\begin{displaymath}
	\xymatrix{
		\Pi_{X}\ar[r]^{f_{*}} & \Pi_{Y} \\
		& \gal(\bar{k}/{k}) \ar[u]^{s}
	}
	\end{displaymath}
	to be the \emph{virtual fundamental group} of the virtual variety $\mathcal{C}$, and denote this group by $\Pi_{\mathcal{C}}$. There exists a natural surjective morphism $\Pi_{\mathcal{C}}\surjto \gal(\bar{k}/{k})$. We shall refer to the kernel $\Delta_{\mathcal{C}}$ of this morphism as the \emph{geometric virtual fundamental group}.
	
	(iii) For simplicity, if there is no ambiguity in the base $k$, we often abbreviate the collection of data $(X,Y,f,k,\bar{k},t,s)$ as $[f,s]$.
\end{defi}


The following corollary follows immediately from the definitions.
\begin{coro}\label{l24}
	Let $\mathcal{C}:(X,Y,f,k,\bar{k},t,s)$ be a virtual variety. Then there exist natural exact sequences
	\begin{displaymath}
	1\rightarrow \Delta_{\mathcal{C}} \rightarrow \Pi_{\mathcal{C}} \rightarrow \gal(\bar{k}/{k}) \rightarrow 1
	\end{displaymath}
	\begin{displaymath}
	1\rightarrow \Delta_{\mathcal{C}} \rightarrow \Pi_{X} \rightarrow \Pi_{Y} \rightarrow 1.
	\end{displaymath}
	Moreover, $\Delta_{\mathcal{C}}$, considered as a subgroup of $\Pi_{X}$, is independent of the choice of $s$.	
\end{coro}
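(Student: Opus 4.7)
The plan is to unwind the pullback description of $\Pi_{\C}$ and then identify its natural subquotients explicitly. By \cref{l41}(ii), an element of $\Pi_{\C}$ is a pair $(a,g)\in \Pi_{X}\times \gal(\bar k/k)$ with $f_{*}(a)=s(g)$, and the two projections give continuous homomorphisms $\pi_{X}\colon \Pi_{\C}\to \Pi_{X}$ and $\pi_{G}\colon \Pi_{\C}\to \gal(\bar k/k)$, the second being the surjection $\Pi_{\C}\surjto \gal(\bar k/k)$ appearing in the definition of $\Delta_{\C}$.

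First I would verify the surjectivity of $\pi_{G}$: given $g\in \gal(\bar k/k)$, the hypothesis in \cref{l41}(i) that $f_{*}$ is surjective produces some $a\in \Pi_{X}$ with $f_{*}(a)=s(g)$, so $(a,g)\in \Pi_{\C}$ maps to $g$. This yields the first displayed exact sequence, with $\Delta_{\C}=\ker(\pi_{G})$ by definition.

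Next I would compute $\ker(\pi_{G})$ concretely. A pair $(a,g)$ lies in the kernel iff $g=1$, in which case the pullback condition becomes $f_{*}(a)=s(1)=1$, i.e.\ $a\in \ker(f_{*})$. Thus $\pi_{X}$ restricts to a continuous isomorphism $\Delta_{\C}\simto \ker(f_{*})$ onto its image in $\Pi_{X}$ (the inverse is $a\mapsto (a,1)$), and this identification allows us to view $\Delta_{\C}$ as a closed subgroup of $\Pi_{X}$. The second exact sequence
\begin{displaymath}
1\rightarrow \Delta_{\C}\rightarrow \Pi_{X}\xrightarrow{f_{*}} \Pi_{Y}\rightarrow 1
\end{displaymath}
is then simply the exact sequence associated to the surjection $f_{*}$, whose surjectivity is again part of the hypotheses in \cref{l41}(i).

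Finally, the independence of $\Delta_{\C}\subset \Pi_{X}$ from the section $s$ is immediate from the identification $\Delta_{\C}=\ker(f_{*})$, since the right-hand side depends only on the defining morphism $f$, not on $s$. There is no real obstacle here: the entire corollary is formal from the pullback construction together with the surjectivity assumptions built into the definition of a virtual variety; the only point worth stating carefully is the identification of the abstract kernel $\ker(\pi_{G})$ with the concrete subgroup $\ker(f_{*})\subset \Pi_{X}$, which justifies the implicit embedding used in the second exact sequence.
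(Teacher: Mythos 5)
Your proof is correct and matches the paper's intent exactly: the paper omits the argument entirely, stating that the corollary ``follows immediately from the definitions,'' and your explicit unwinding of the pullback --- identifying $\Delta_{\mathcal{C}}=\ker(\pi_{G})$ with $\ker(f_{*})\subset\Pi_{X}$ via the first projection, deducing both exact sequences from the surjectivity hypotheses in \cref{l41}(i), and noting that $\ker(f_{*})$ is manifestly independent of $s$ --- is precisely the formal verification being left to the reader.
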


\begin{defi}\label{l45}
	Let $\C':(X',Y',f',k',\bar{k}',t',s')$, $\C:(X,Y,f,k,\bar{k},t,s)$ be virtual varieties. Then a \emph{morphism $\mathcal{F}:\C'\rightarrow \C$ of virtual varieties} is an ordered triple $(h,i,j)$, where
	\begin{itemize}
		\item $j:\sp(k')\rightarrow \sp(k)$ is a finite morphism;
		\item $h:X'\rightarrow X$ and $i:Y'\rightarrow Y$ are $k$-morphisms between the respective sources and targets
	\end{itemize}
	such that the diagram
	\begin{displaymath}
	\xymatrix{
		X'\ar[r]^{f'}\ar[d]^{h} & Y'\ar[d]^{i} \\
		X\ar[r]^{f} & Y 
	}
	\end{displaymath}
	and the induced diagram
	\begin{displaymath}
	\xymatrix{
		\Pi_{X'} \ar[r]^{f_{*}'}\ar[d]^{h_{*}} & \Pi_{Y'}\ar[d]^{i_{*}} & \gal(\bar{k}'/{k}')\ar[l]^{s'}\ar[d]^{j_{*}}\\
		\Pi_{X} \ar[r]^{f_{*}} & \Pi_{Y} & \gal(\bar{k}/{k})\ar[l]^{s}
	}
	\end{displaymath}
	commute. We say that $\F$ is \emph{\'etale} if $h$, $i$ are \etalee. We say that $\F$ is an \emph{\etale covering} if $h$, $i$ are \etale coverings.
\end{defi}

The following corollary follows immediately from the definitions.
\begin{coro}\label{l20}\quad
	
	(i) Let $\F:\C'\rightarrow \C$ be a morphism of virtual varieties. Then $\F$ induces compatible morphisms $\pi_{\F}:\Pi_{\C'}\rightarrow \Pi_{\C}$, $\delta_{\F}:\Delta_{\C'}\rightarrow \Delta_{\C}$ of topological groups. If $\F$ is an \etale covering, then $\pi_{\F}$, $\delta_{\F}$ are inclusions of open subgroups. 
	
	(ii) Conversely, let $\C$ be a virtual variety. Then for any open subgroup $H\subset \Pi_{\C}$, there exists an \etale covering $\C'\rightarrow \C$ such that the image of $\Pi_{\C'}$ in $\Pi_{\C}$ is $H$.
\end{coro}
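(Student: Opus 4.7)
The plan is to handle (i) and (ii) separately, with (ii) requiring a genuine construction. For part (i), I would exploit the description of $\Pi_{\C'}$ and $\Pi_{\C}$ as profinite fiber products. The commuting squares of \cref{l45} forced by $\F = (h,i,j)$ allow the universal property of the fiber product to produce a unique continuous homomorphism $\pi_{\F}: \Pi_{\C'} \to \Pi_{\C}$ compatible with all projections; restricting $\pi_{\F}$ to the kernels of the surjections onto the Galois groups yields $\delta_{\F}$. When $\F$ is an \etale covering, $h_{*}$, $i_{*}$, $j_{*}$ are inclusions of open subgroups. Identifying $\Pi_{\C}$ with $\{(a,b) \in \Pi_{X} \times G_{k} : f_{*}(a) = s(b)\}$ (and similarly for $\Pi_{\C'}$), I would directly compute that the image of $\pi_{\F}$ is $(h_{*}(\Pi_{X'}) \times j_{*}(G_{k'})) \cap \Pi_{\C}$, visibly open in $\Pi_{\C}$. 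Injectivity of $\pi_{\F}$ reduces to that of $h_{*}$ and $j_{*}$ together with the fact that any section of $\Pi_{Y}\surjto G_{k}$ is injective; the corresponding assertion for $\delta_{\F}$ follows from the analogous identification $\Delta_{\C} = \ker(f_{*})$ supplied by \cref{l24}.

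For part (ii), given an open subgroup $H \subset \Pi_{\C}$, I plan to produce open subgroups $V \subset \Pi_{X}$, $W \subset \Pi_{Y}$, $V_{k} \subset G_{k}$ that will serve as the fundamental groups of the desired $X'$, $Y'$, $\sp(k')$. First, set $V_{k} := p(H)$, where $p: \Pi_{\C} \surjto G_{k}$ is the surjection of \cref{l24}; this is open since $p$ is a quotient map. Next, using that $\Pi_{\C}$ is closed in $\Pi_{X}$ with the subspace topology (as the preimage of $s(G_{k})$ under $f_{*}$), choose a normal open subgroup $N_{X} \triangleleft \Pi_{X}$ with $N_{X} \cap \Pi_{\C} \subset H$ and small enough that $t_{*} f_{*}(N_{X}) \subset V_{k}$, and set $V := H \cdot N_{X}$. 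A direct check exploiting normality of $N_{X}$ shows that $V$ is an open subgroup of $\Pi_{X}$ with $V \cap \Pi_{\C} = H$. Finally, take $W := f_{*}(V)$, which is open in $\Pi_{Y}$ since $f_{*}$ is a quotient map by the surjectivity axiom of \cref{l41}.

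With $V$, $W$, $V_{k}$ fixed, let $h: X' \to X$, $i: Y' \to Y$, $j: \sp(k') \to \sp(k)$ be the associated connected \etale coverings. The smallness condition $t_{*}f_{*}(N_{X}) \subset V_{k}$ guarantees $t_{*}f_{*}(V) = V_{k}$, which promotes $X'$ and $Y'$ to $k'$-schemes. The morphism $f': X' \to Y'$ exists because $f_{*}(V) \subset W$, and its smoothness follows by factoring $X' \to Y'$ through $X \times_{Y} Y'$, the first map being \etale and the second smooth. The section $s' := s|_{G_{k'}}$ takes values in $W$ because $s(V_{k}) = f_{*}(H) \subset f_{*}(V) = W$, and all required commutativities are immediate. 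Surjectivity of $f'_{*}$ and $t'_{*}$ is built in by construction. Thus $\C'$ is a virtual variety and $\F := (h,i,j)$ an \etale covering, and the explicit formula from (i) gives $\pi_{\F}(\Pi_{\C'}) = (V \times V_{k}) \cap \Pi_{\C} = H$, as required.

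The main obstacle will be the simultaneous choice of $V$ and $V_{k}$ so that both (a) $\C'$ genuinely satisfies the axioms of a virtual variety --- chiefly that $X'$, $Y'$ descend to $k'$ and that $f'_{*}$, $t'_{*}$ remain surjective --- and (b) the image of $\pi_{\F}$ comes out to $H$ exactly, rather than a larger subgroup like $H \cdot \Delta_{\C}$. Point (a) forces the normality and smallness conditions imposed on $N_{X}$, while point (b) dictates the choice $V_{k} = p(H)$ rather than some larger open subgroup of $G_{k}$ through which $H$ projects. Once these are reconciled, the rest of the verification is a straightforward unpacking of the pullback description of virtual fundamental groups established in \cref{l41} and \cref{l24}.
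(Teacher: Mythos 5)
Your proof is correct. The paper offers no argument for this corollary (it is simply asserted to ``follow immediately from the definitions''), and your write-up is exactly the routine verification that is being left implicit: part (i) via the universal property of the fiber product defining $\Pi_{\C}$ together with the identifications $\Pi_{\C}\cong f_{*}^{-1}(s(G_{k}))\subset\Pi_{X}$ and $\Delta_{\C}=\ker(f_{*})$ from \cref{l24}, and part (ii) via the Galois correspondence, with the choices $V_{k}=p(H)$, $V=H\cdot N_{X}$, $W=f_{*}(V)$ correctly arranged so that $V\cap\Pi_{\C}=H$, $t_{*}(W)=V_{k}$, and $s(V_{k})=f_{*}(H)\subset W$, which is what makes $\C'$ a genuine virtual variety with $\pi_{\F}(\Pi_{\C'})=H$.
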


Note that the \etale covering $\C'\rightarrow \C$ of \cref{l20}(ii) is \emph{not necessarily unique up to isomorphism}.

In this paper, our main interest lies in the following special class
of virtual varieties.

\begin{defi}\label{l23}
	A virtual variety $(X,Y,f,k,\bar{k},t,s)$ is referred to as a \emph{pointed virtual curve of type $(g,r)$} if $f$ is a family of hyperbolic curves of type $(g,r)$.
\end{defi}

\begin{coro}\label{l21}
	Any \etale covering of a pointed virtual curve is a pointed virtual curve (possibly of a different type).
\end{coro}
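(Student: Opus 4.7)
Since the data $(X',Y',f',k',\bar k',t',s')$ underlying $\C'$ is already assumed to form a virtual variety in the definition of a morphism of virtual varieties (\cref{l45}), the only substantive content of \cref{l21} is that the defining morphism $f'\colon X'\to Y'$ is a family of hyperbolic curves of some type $(g',r')$. The plan is to compare $f'$ with the \'etale base change $\tilde f\colon \tilde X := X\times_Y Y'\to Y'$ of $f$ along $i$, which is itself a family of hyperbolic curves of type $(g,r)$ with smooth compactification $\tilde X^{\cl}:= X^{\cl}\times_Y Y'$ and cuspidal divisor $(X^{\cl}\setminus X)\times_Y Y'\to Y'$ finite \'etale. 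The commutativity of the defining square of $\F$ produces a canonical $Y'$-morphism $\alpha\colon X'\to \tilde X$, and the two-out-of-three property for \'etale morphisms together with the hypothesis that $h$ and $i$ are \'etale coverings shows that $\alpha$ is a finite \'etale covering of connected normal schemes (finiteness from the fact that $X'$ is finite over $X$; surjectivity from the surjectivity of $h$ and the connectedness of $\tilde X$).

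Next I would build a smooth compactification of $f'$ by defining $X'^{\cl}$ to be the normalization of $\tilde X^{\cl}$ in the function field $K(X')$; this is finite over $\tilde X^{\cl}$ and restricts to $\alpha$ over $\tilde X\subset\tilde X^{\cl}$. The content then lies in verifying that $X'^{\cl}\to Y'$ is smooth, which is a local question: passing to the strict Henselization of $Y'$ at a geometric point lying under a cusp, the local model for $\tilde X^{\cl}\to Y'$ is $\sp(R[[t]])\to \sp(R)$ with cuspidal locus $\{t=0\}$, and in characteristic zero every connected finite \'etale cover of $\sp(R[[t]][t^{-1}])$ is Kummer of the form $s^n=t$ for some $n\ge 1$. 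Its normalization is $\sp(R[[s]])$, which is smooth over $\sp(R)$ with $\{s=0\}$ finite \'etale over $\sp(R)$. Consequently $X'^{\cl}\to Y'$ is smooth and proper, and $X'^{\cl}\setminus X'\to Y'$ is finite \'etale, so $f'$ is a family of curves.

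Finally, the hyperbolicity of $f'$ follows because each geometric fiber of $f'$ is a finite \'etale cover of a hyperbolic fiber of $\tilde f$, and finite \'etale covers of hyperbolic curves are hyperbolic by the multiplicativity of Euler characteristics; the type $(g',r')$ is then determined by the degree of $\alpha$ together with the ramification data at the cusps. The main technical obstacle is the smoothness of the compactification $X'^{\cl}$, which rests on the tame Kummer description of finite \'etale covers of the punctured formal disc over a strictly Henselian base in characteristic zero; the other assertions (properness, connectedness, finiteness of $\alpha$, and the hyperbolicity step) are straightforward once this local description is in place.
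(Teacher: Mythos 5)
Your argument is correct in substance, but it takes a genuinely different route from the paper: the paper disposes of \cref{l21} in one line by citing [Hos3], Proposition~2.3, whereas you give a self-contained construction --- base change $\tilde X = X\times_Y Y'$ of the given family along $i$, the induced finite \'etale $Y'$-morphism $\alpha\colon X'\to\tilde X$, normalization of $\tilde X^{\cl}$ in $K(X')$ to produce the compactification, and the tame Kummer/Abhyankar description of \'etale covers of the punctured disc over a strictly henselian base to verify smoothness of the compactification and \'etaleness of the new cuspidal divisor. This is essentially the standard proof underlying the cited result, so your version buys transparency and independence from the reference at the cost of length; the paper's citation buys brevity. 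One point you should make explicit: being a ``family of curves'' in the sense of \cref{Notations and Terminologies} requires the compactified morphism $X'^{\cl}\to Y'$ to have \emph{geometrically connected} fibers (and without this the ``type'' $(g',r')$ is not even well defined, since a finite \'etale cover of a connected fiber may be disconnected). This does not follow from your local analysis alone; it follows from the requirement in \cref{l41} that $f'_*\colon\Pi_{X'}\to\Pi_{Y'}$ be surjective, since the Stein factorization of the smooth proper morphism $X'^{\cl}\to Y'$ is finite \'etale in characteristic zero, and a nontrivial such factorization would force $f'_*$ to factor through a proper open subgroup of $\Pi_{Y'}$. With that observation added (and the resulting constancy of the type, which follows from connectedness of $Y'$), your proof is complete.
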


\begin{proof}
	This follows immediately from \cite{Hos3}, Proposition 2.3.
	
\end{proof}

The following proposition shows that the language of fundamental groups
of virtual varieties behaves in the expected way (at least in the case of 
pointed virtual curves), in the sense that there exists a natural isomorphism from the geometric fundamental group of a fiber (of a rational point) to the geometric virtual fundamental group.

\begin{prop}\label{l01}
	Let $\C:(X,Y,f,k,\bar{k},t,s)$ be a pointed virtual curve; $\bar{y}$ a geometric point of $Y$. Then the natural group morphism
	\begin{displaymath}
	\Pi_{X_{\bar{y}}}\rightarrow \Delta_{\mathcal{C}}
	\end{displaymath}
	is an isomorphism.
\end{prop}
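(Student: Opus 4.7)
The plan is to reduce the statement to the homotopy exact sequence
\[
1 \to \Pi_{X_{\bar{y}}} \to \Pi_X \to \Pi_Y \to 1
\]
for the family of hyperbolic curves $f : X \to Y$. Granting this sequence, comparison with the second exact sequence in \cref{l24}, which exhibits $\Delta_{\C}$ as the kernel of $f_* : \Pi_X \to \Pi_Y$, immediately identifies both $\Pi_{X_{\bar{y}}}$ and $\Delta_{\C}$ with this kernel and yields the desired isomorphism $\Pi_{X_{\bar{y}}} \simto \Delta_{\C}$.

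To produce the homotopy exact sequence, the first step is to unwind the definition of a family of curves: write $f = g \circ i$, where $g : Z \to Y$ is smooth, proper and geometrically connected of relative dimension one, $i : X \hookrightarrow Z$ is an open immersion with dense image, and $D \coloneqq Z \setminus X$ (with the reduced induced scheme structure) is finite \etale over $Y$. Equip $Z$ with the log structure $Z^{\log}$ induced by $D$, and give $Y$ the trivial log structure, so that $Z^{\log} \to Y$ is a proper log smooth morphism whose geometric log-fibers are pointed smooth curves of type $(g, r)$. By log purity (cf. \cite{MZK4}, Theorem B) one has $\Pi_{Z^{\log}} \simeq \Pi_X$, and the log fundamental group of the geometric log-fiber over $\bar{y}$ is canonically isomorphic to $\Pi_{X_{\bar{y}}}$. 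The standard homotopy exact sequence for proper log smooth morphisms of fs log schemes with geometrically connected log-fibers over a normal connected base then furnishes the required exact sequence.

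The main obstacle is precisely this last homotopy exact sequence: in the purely scheme-theoretic setting, the non-properness of $f$ prevents a direct appeal to SGA 1, Expos\'{e} X, and the log-geometric device above is needed to replace the missing properness by a tamely-ramified compactification at the cusps. One also needs to confirm the surjectivity of $\Pi_X \to \Pi_Y$ (which is built into the definition of a virtual variety) and that no extra $\pi_0$-contribution to the cokernel arises; both are automatic here since the fibers of $f$ are geometrically connected hyperbolic curves and $Y$ is normal. Once these verifications are in place, the conclusion of the proposition follows formally from the comparison of the two exact sequences.
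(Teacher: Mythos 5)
Your overall route --- log compactification at the cusps, log purity, the homotopy exact sequence for the resulting proper log smooth family, and then comparison with the second exact sequence of \cref{l24} --- is the standard one, and it is essentially the content of the reference the paper invokes: the paper's own proof is a one-line citation of \cite{Hos3}, Proposition 2.4(i), so you are supplying details the paper omits rather than taking a different path.

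There is, however, one genuine gap at the crux of the argument. The ``standard homotopy exact sequence'' for a proper (log) smooth morphism with geometrically connected fibers over a normal connected base gives only
\begin{displaymath}
\Pi_{X_{\bar{y}}}\rightarrow \Pi_{X}\rightarrow \Pi_{Y}\rightarrow 1,
\end{displaymath}
i.e., exactness at $\Pi_{X}$ and surjectivity onto $\Pi_{Y}$; it does \emph{not} assert injectivity of $\Pi_{X_{\bar{y}}}\rightarrow \Pi_{X}$. That injectivity is precisely what separates the proposition from the tautological identification of $\Delta_{\C}$ with the kernel of $f_{*}$, and it fails for general fibrations: it must be established by an argument specific to families of hyperbolic curves in characteristic zero (e.g., topological finite generation of the geometric fundamental group of the fiber together with the comparison with the topological fibration sequence over $\mathbb{C}$ and goodness, or the extendability of finite \etale coverings of the fiber to coverings of the total space after a finite \etale base change). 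The paper itself is careful about exactly this point elsewhere: in \cref{l09} it extracts only the right-exact portion of the sequence from [\cite{Hos}, \S3, Theorem 2] and then proves injectivity of $\Pi_{T}\rightarrow \Pi_{A}$ by a separate multiplication-by-$n$ argument. Your proof should either cite a statement that explicitly includes left-exactness for hyperbolic families (as \cite{Hos3}, Proposition 2.4(i) does) or supply that step; as written, the phrase ``then furnishes the required exact sequence'' silently assumes the hardest part.
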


\begin{proof}
	This is a special case of \cite{Hos3}, Proposition 2.4(i).
\end{proof}

The following corollary follows immediately from \cref{l01}.

\begin{coro}\label{l02}
	Let $\C:(X,Y,f,k,\bar{k},t,s)$ be a virtual variety; $y$ a $k$-rational point of $Y$; $\bar{y}$ a geometric point over $y$. Assume that $s$ arises (up to an inner automorphism) from the point $y$. Then there exists a natural commutative diagram
	\begin{displaymath}
	\xymatrix{
		\Pi_{X_{\bar{y}}}\ar[r]\ar[d] & \Pi_{X_{y}}\ar[r]\ar[d] & \gal(\bar{k}/k)\ar@{=}[d] \\
		\Delta_{\C}\ar[r] & \Pi_{\C}\ar[r] & \gal(\bar{k}/k).	
	}
	\end{displaymath}
	If we assume further that $\C$ is a pointed virtual curve, then the vertical arrows are isomorphisms.
\end{coro}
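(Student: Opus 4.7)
The plan is to construct the diagram from the universal property of the pullback defining $\Pi_{\C}$, and then invoke \cref{l01} plus the five lemma for the isomorphism claim. First, the $k$-rational point $y$, together with a choice of geometric point $\bar{y}$ over $y$, yields a section $s_{y}:G_{k}\rightarrow \Pi_{Y}$ of $t_{*}$ (the top row of the diagram), and the inclusion of the fiber $X_{y}\hookrightarrow X$ induces $\Pi_{X_{y}}\rightarrow \Pi_{X}$, while the structure morphism $X_{y}\rightarrow \sp(k)$ induces $\Pi_{X_{y}}\rightarrow G_{k}$. By the assumption that $s$ arises from $y$ (up to an inner automorphism), we have, up to composition with an inner automorphism, the equality of outer homomorphisms
\begin{displaymath}
f_{*}\circ (\Pi_{X_{y}}\rightarrow \Pi_{X}) = s\circ (\Pi_{X_{y}}\rightarrow G_{k}),
\end{displaymath}
so that the universal property of $\Pi_{\C}=\Pi_{X}\times_{\Pi_{Y}}G_{k}$ produces a canonical morphism $\Pi_{X_{y}}\rightarrow \Pi_{\C}$ compatible with the projection to $G_{k}$. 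Restricting to the kernels of the projections to $G_{k}$ then yields $\Pi_{X_{\bar{y}}}\rightarrow \Delta_{\C}$, and these maps assemble into the commutative diagram asserted in the statement.

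For the second assertion, assume that $\C$ is a pointed virtual curve, so that $f$ is in particular a family of hyperbolic curves, whence $X_{y}$ is (geometrically connected and) a hyperbolic curve over $k$. In this situation the fiber exact sequence
\begin{displaymath}
1\rightarrow \Pi_{X_{\bar{y}}}\rightarrow \Pi_{X_{y}}\rightarrow G_{k}\rightarrow 1
\end{displaymath}
holds, while the lower row of the diagram is exact by \cref{l24}. \cref{l01} (applied to the geometric point $\bar{y}$ of $Y$) gives that the left vertical arrow $\Pi_{X_{\bar{y}}}\rightarrow \Delta_{\C}$ is an isomorphism; since the right vertical arrow is the identity on $G_{k}$, the five lemma forces the middle vertical arrow $\Pi_{X_{y}}\rightarrow \Pi_{\C}$ to be an isomorphism as well.

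The only mild subtlety is the bookkeeping of basepoint/inner-automorphism ambiguity required to make the map $\Pi_{X_{y}}\rightarrow \Pi_{\C}$ well defined out of the pullback; this is precisely the content of the convention in \cref{Notations and Terminologies} concerning the identification of $s$ with the section determined by $y$ ``up to an inner automorphism''. Once this is dispensed with, the argument proceeds purely by invoking \cref{l01} and the five lemma, as indicated by the remark preceding the corollary.
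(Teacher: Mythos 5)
Your proposal is correct and follows essentially the same route the paper intends: the paper offers no written proof beyond the remark that the corollary ``follows immediately from \cref{l01}'', and your argument is the natural expansion of that remark — construct the map $\Pi_{X_{y}}\to\Pi_{\C}$ via the universal property of the defining pullback (modulo the usual inner-automorphism bookkeeping), note exactness of both rows (the top row being the homotopy exact sequence for the geometrically connected hyperbolic curve $X_{y}$ in characteristic zero, the bottom row being \cref{l24}), and conclude with \cref{l01} and the five lemma. No gaps.
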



\section{Weights of Determinant Maps}\label{Weights of Determinant Maps}

In general, it is difficult to try to recover any sort of scheme structure from the virtual fundamental group. Nevertheless we note that if we work in the situation of \cref{l01}, then the geometric virtual fundamental group is isomorphic to the geometric \etale fundamental group of a certain curve, and it is natural to expect that it should be possible to reconstruct data corresponding to the \emph{cuspidal inertia subgroups} [cf. \cref{l27}]. This makes sense since in the case where the section arises from a rational point [cf. \cref{l02}], the set of conjugacy classes of cuspidal inertia subgroups, which arises from the set of conjugacy classes of decomposition groups determined by the cusps, is \emph{independent} of the (geometric) section chosen [cf. \cref{l39}]. Therefore, we may expect that even if we omit the assumption that the section arises from a rational point, it should still be possible to reconstruct data corresponding to the cuspidal inertia subgroups.

To achieve this goal, we try to mimic the technique developed in \cite{MZK3}, \S 4, especially Lemma 4.5. First, we recall certain definitions and properties of Albanese varieties.

\begin{defi}\quad
	[cf. \cite{MZK3}, Definition A.1]
	
	Let $k$ be a field of characteristic zero; $\comm$ a class of commutative group schemes of finite type over $k$. Suppose that $(X,x)$ is a pair where $X$ is a \emph{geometrically integral} variety over $k$; $x$ is a $k$-rational point of $X$. Then we shall refer to a morphism $f:(X,x)\rightarrow (A,0_{A})$ of pointed $k$-varieties (i.e., ``pointed variety over $k$'' in the sense of \cite{MZK3}, Definition A.1) as a $\comm$-\emph{Albanese morphism} if $A\in\comm$, and for any morphism $f':(X,x)\rightarrow (A',0_{A'})$ of pointed $k$-varieties, where $A'\in\comm$, there exists a unique morphism of group schemes $g:A\rightarrow A'$ such that $f'=g\circ f$. We refer to $A$ as the \emph{$\comm$-Albanese variety} of $X$ (which in fact depends on the choice $x$ of a $k$-rational point of $X$). If $\comm$ is the class of semi-abelian varieties [cf. \cite{FC}, Chapter 1, Definition 2.3], we often abbreviate the term ``$\comm$-Albanese'' as ``Albanese''.
\end{defi}


\begin{prop}\label{l05}
	Let $k$ be a field of characteristic zero; $\bar{k}$ an algebraic closure of $k$; $X$ a \emph{proper smooth} curve over $k$, $x$ a $k$-rational point. Then:
	
	(i) There exists an Albanese morphism
	\begin{displaymath}
	(X,x)\rightarrow (A,0_{A})
	\end{displaymath}
	(where $A$ is actually an \emph{abelian} variety), which induces an isomorphism 
	\begin{displaymath}
	\Pi_{X_{\bar{k}}}^{\ab}\to \Pi_{A_{\bar{k}}}
	\end{displaymath}
	that is compatible with the respective actions by $G_{k}$. Moreover, $A$ is isomorphic to $(\Pic^{0}_{X/k})^{\vee}$, the dual of the connected component of the Picard scheme of $X$ that contains the identity (which is an abelian variety).
	
	(ii) Let $D\subset X$ be a finite set of closed points (which naturally determines a divisor with normal crossings) such that $x\notin D$; $Y$ the complement of $D$ in $X$; $M$ the free $\Z$-module of divisors supported in $D$ (i.e., the free $\Z$-module generated by the points in $D$); $P$ the submodule of $M$ of divisors with degree zero. Regard $x$ as a point of $Y$. Then there exists an Albanese morphism
	\begin{displaymath}
	(Y,x)\rightarrow (A',0_{A'}),
	\end{displaymath}
	which induces an isomorphism
	\begin{displaymath}
	(\Pi_{Y_{\bar{k}}})^{\ab}\xrightarrow{\sim} \Pi_{A'_{\bar{k}}}
	\end{displaymath}
	that is compatible with the respective actions by $G_{k}$. Moreover, if each closed point of $D$ is $k$-rational, then there exists a naturally induced exact sequence (of group schemes over $k$)
	\begin{displaymath}
	1\rightarrow B\rightarrow A'\rightarrow A\rightarrow 1,
	\end{displaymath}
	where $B$ is naturally isomorphic over $k$ to the \emph{torus} with \emph{character group} $P$.
\end{prop}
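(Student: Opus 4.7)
The statement packages two classical results: (i) asserts that the Jacobian of a proper smooth curve serves as its Albanese, together with the comparison between the abelianized \'etale fundamental group and the Tate module of the Jacobian; (ii) is the analogous assertion for the complement of a finite divisor, in which the generalized Jacobian of Rosenlicht--Serre plays the role of the Albanese.

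For part (i), I would set $A := (\Pic^0_{X/k})^{\vee}$, noting that by the autoduality of the Jacobian of a curve this is canonically isomorphic to $\Pic^0_{X/k}$ itself, and take the Abel--Jacobi morphism $(X,x) \to (A, 0_A)$ given by $y \mapsto \mathcal{O}_X(y - x)$. The universal property against abelian varieties is the classical Weil--Chow theorem: every pointed $k$-morphism from $(X,x)$ to a pointed abelian variety factors uniquely through this Abel--Jacobi map. For the identification of fundamental groups I would use Kummer theory: for each $n \geq 1$, pullback of the multiplication-by-$n$ isogeny on $A$ yields an \'etale $A[n]$-torsor over $X$, and by comparing with the Kummer sequence on $X$ one obtains, compatibly in $n$, a $G_k$-equivariant isomorphism $\Hom(A[n], \Z/n\Z) \simto H^1(X_{\bar{k}}, \Z/n\Z)$; passing to the inverse limit and dualizing produces the desired $G_k$-equivariant isomorphism $\Pi_{X_{\bar{k}}}^{\ab} \simto \Pi_{A_{\bar{k}}}$.

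For part (ii), I would invoke the generalized Jacobian $J_D$ of Rosenlicht--Serre with modulus the reduced divisor $D$, set $A' := J_D$, and define $(Y,x) \to (A', 0_{A'})$ by the usual formula $y \mapsto [y - x]$. The universal property taken in the category of semi-abelian varieties is the main result of Serre's construction (\emph{Groupes alg\'ebriques et corps de classes}, Ch.~V): any pointed $k$-morphism from $(Y,x)$ to a semi-abelian variety extends rationally to $X$ and factors uniquely through $J_D$. The exact sequence $1 \to B \to A' \to A \to 1$ is built into the construction, and when each point of $D$ is $k$-rational, $B$ is the cokernel of the diagonal embedding $\G_m \hookrightarrow \prod_{p \in D} \G_m$, hence a split torus with character group $\ker(\deg\colon \Z^D \to \Z) = P$. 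For the fundamental group assertion, the same Kummer-theoretic argument as in (i) gives $\Pi_{A'_{\bar{k}}}$ sitting in an exact sequence whose outer terms are $P \otimes \hat{\Z}(1)$ and $\Pi_{A_{\bar{k}}}$, matching the well-known exact sequence of $G_k$-modules
\begin{equation*}
0 \to P \otimes \hat{\Z}(1) \to \Pi_{Y_{\bar{k}}}^{\ab} \to \Pi_{X_{\bar{k}}}^{\ab} \to 0
\end{equation*}
arising from the tame inertia at the cusps (one gets $P$ rather than $\Z^D$ because the product of the cuspidal inertia generators is trivial in $\Pi_{Y_{\bar{k}}}^{\ab}$).

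The main technical point will be the Galois-equivariant identification, in (ii), of the torus part of the Tate module of $A'$ with the inertial contribution $P \otimes \hat{\Z}(1)$ in $\Pi_{Y_{\bar{k}}}^{\ab}$. This reduces to checking that the Serre--Rosenlicht map on $n$-torsion coincides, at each cusp, with the tame-symbol/residue map, which is essentially the content of \cite{MZK3}, Appendix; the remainder of the argument is then a formal diagram chase.
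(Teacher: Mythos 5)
Your proposal is correct, but it takes a genuinely different route from the paper: the paper's entire proof is a citation of \cite{MZK3}, Propositions A.6 and A.8 (supplemented by the smoothness of the Picard scheme via $H^{2}(X,\mathcal{O}_X)=0$ and the torsion-freeness of the relevant abelianizations), whereas you re-derive the content of those propositions from classical sources --- the Weil--Chow universal property of the Jacobian, the Rosenlicht--Serre generalized Jacobian with reduced modulus $D$, and Kummer theory for the comparison of $\Pi^{\ab}_{X_{\bar{k}}}$ with the Tate module. What your approach buys is self-containedness and transparency about where each piece of the statement comes from; what it costs is that you must still defer the one genuinely delicate point (the agreement of the local symbol of the Abel--Jacobi map at each cusp with the tame symbol, which is what makes the toric parts match) to the very appendix the paper cites, so the logical dependence is ultimately the same. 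Three small imprecisions worth fixing if you write this out: (a) the cuspidal contribution to $\Pi^{\ab}_{Y_{\bar{k}}}$ is the cokernel of the diagonal $\hat{\Z}(1)\to\bigoplus_{p\in D}\hat{\Z}(1)$, i.e.\ canonically $\Hom(P,\hat{\Z}(1))$ rather than $P\otimes\hat{\Z}(1)$; these agree as $G_k$-modules in the only case where the proposition identifies $B$ (all points of $D$ being $k$-rational), but the distinction matters if one wants the general Galois-equivariant statement; (b) since the Albanese in \cref{l05} is taken relative to the class of \emph{semi-abelian} varieties, the Weil--Chow theorem for abelian targets must be supplemented by the (one-line) observation that a morphism from the proper curve $X$ to a torus is constant, so that maps to semi-abelian targets automatically factor through the abelian quotient; (c) the isomorphism $A\simeq(\Pic^{0}_{X/k})^{\vee}\simeq\Pic^{0}_{X/k}$ via autoduality should be invoked explicitly when you identify your Abel--Jacobi target with the $A$ of the statement.
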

\begin{proof}
	The assertions of \cref{l05} follow immediately from \cite{MZK3}, Proposition A.6, and \cite{MZK3}, Proposition A.8, together with the fact that $H^{2}(X,\mathcal{O}_X)=0$ (which implies that the Picard scheme of $X$ is \emph{smooth}), and the fact that the abelianizations $(\Pi_{X_{\bar{k}}})^{\ab}$, $(\Pi_{Y_{\bar{k}}})^{\ab}$ of the geometric fundamental groups are torsion-free [cf. \cite{MZK1}, Remark 1.2.2].
\end{proof}


\begin{lemm}\label{l04}
	Let $\mathcal{C}:(X,Y,f,k,\bar{k},t,s)$ be a pointed virtual curve of type $(g,r)$ with \emph{integral} target $Y$ and base $k$ of characteristic zero; $\zeta$ the generic point of $Y$; $\bar{\zeta}$ a geometric point over $\zeta$; $K$ the function field of $Y$. Then
	
	(i) $X_{\zeta}\rightarrow \zeta$ is a smooth curve.
	
	(ii) The natural commutative diagram
	\begin{displaymath}
		\xymatrix{
			\Pi_{X_{\bar{\zeta}}}\ar[d]\ar[r] & \Pi_{X_{\zeta}}\ar[d]\ar[r] & \gal(\bar{\zeta}/\zeta)\ar[d]\\
			\Delta_{\mathcal{C}}\ar[r] & \Pi_{X}\ar[r] & \Pi_{Y}
		}
	\end{displaymath}
	[cf. \cref{l24}] induces an (outer) isomorphism
	\begin{displaymath}
		\Pi_{X_{\bar{\zeta}}}\simto \Delta_{\mathcal{C}}.
	\end{displaymath}
\end{lemm}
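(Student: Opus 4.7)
The plan is to derive part (i) directly from the definition of a family of curves, and to deduce part (ii) as a straightforward application of \cref{l01} to the generic geometric point $\bar\zeta$.

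For part (i), I would unravel \cref{l23} together with the definition of a family of curves given in \cref{Notations and Terminologies}: the defining morphism $f:X\to Y$ admits a factorization $f = g\circ i$, where $g:Z\to Y$ is smooth, proper, and geometrically connected of relative dimension one; $i:X\hookrightarrow Z$ is an open immersion with dense image; and the reduced complement of $i(X)$ in $Z$ is finite \'{e}tale over $Y$. Since each of smoothness, properness, geometric connectedness of the fibers, the property of being an open immersion with dense image, and finite \'{e}taleness is preserved under base change, pulling back along $\zeta\to Y$ yields a factorization $X_\zeta \hookrightarrow Z_\zeta \to \sp(K)$ with the same properties over $\sp(K)$. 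Hence $X_\zeta \to \sp(K)$ is a smooth curve over $K$ in the sense of the definition given in \cref{Notations and Terminologies}.

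For part (ii), I would first observe that the commutativity of the given diagram follows immediately from the functoriality of the \'{e}tale fundamental group applied to the compositions $X_{\bar\zeta}\to X_\zeta \to X$ and $\bar\zeta \to \zeta \to Y$, together with the identification $\Delta_{\C} = \ker(f_{*})$ from \cref{l24}. To obtain the desired isomorphism $\Pi_{X_{\bar\zeta}} \simto \Delta_\C$, it then suffices to invoke \cref{l01}, applied to the geometric point $\bar\zeta$ of $Y$: the statement of \cref{l01} imposes no restriction on the geometric point in question, and the generic geometric point $\bar\zeta$ is in particular a geometric point of $Y$ in the sense of \'{e}tale topology.

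Since both parts reduce to essentially immediate consequences of previously established material, no step in the argument presents a substantive obstacle. The only verifications required are that the properties entering the definition of a family of curves in part (i) are all preserved under base change, and that in part (ii) the generic geometric point $\bar\zeta$ qualifies as a geometric point of $Y$ in the sense of \cref{l01}; both points are routine.
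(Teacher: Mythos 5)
Your proposal is correct and follows exactly the same route as the paper, which disposes of (i) as an immediate consequence of the definitions (your base-change argument is the intended elaboration) and of (ii) as a special case of \cref{l01} applied to the geometric point $\bar{\zeta}$. No discrepancies to report.
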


\begin{proof}
	(i) follows immediately from the definitions. 
	(ii) is a special case of \cref{l01}.
\end{proof}

	

	
	

\begin{defi}
	Let $A$ be a commutative group scheme over a field $k$, $\bar{k}$ a separable closure of $k$. Then: 
	
	(i) Denote by $_{n}A(\bar{k})$ the group of $n$-torsion elements in $A(\bar{k})$. 
	
	(ii) Define the \emph{Tate module} $\T(A_{\bar{k}})$ of $A_{\bar{k}}$ to be the inverse limit topological group
	\begin{displaymath}
	\varprojlim_{n\in \mathbb{N}^{+}} {}_{n}A(\bar{k}),
	\end{displaymath}
	where the transition morphisms are given by
	\begin{displaymath}
	_{mn}A(\bar{k})\rightarrow {}_{n}A(\bar{k}):x\rightarrow mx.
	\end{displaymath}
	
	(iii) Denote by $\T_{\Z_{l}}(A_{\bar{k}})$ the maximal pro-$l$ quotient of $\T(A_{\bar{k}})$. Write $\T_{\Q_{l}}(A_{\bar{k}})\coloneqq\T_{l}(A_{\bar{k}})\otimes_{\Z_{l}}\Q_{l}$.
	
	(iv) Let $n$ be a positive integer. Then denote by $\mu_{n}(\bar{k})$ the group of roots of unity of order $n$ in $\bar{k}$. Note that $\mu_{n}(\bar{k})$ is naturally isomorphic to $_{n}(\G_{m,\bar{k}})$. Let $l$ be any prime number. Then denote by $\mu(\bar{k})$ (respectively, $\mu_{\Z_{l}}(\bar{k})$, $\mu_{\Q_{l}}(\bar{k})$) the Tate module $\T(\G_{m,\bar{k}})$ (respectively, $\T_{\Z_{l}}(\G_{m,\bar{k}})$, $\T_{\Q_{l}}(\G_{m,\bar{k}})$).
	
	(v) Let $n$ be a positive integer. Then denote by $n_{A}:A\to A$ the map given by multiplication by $n$.
	
	(vi) Let $x$ be a $k$-rational point of $A$. Then denote by $t_{x}$ the \emph{translation} by $x$.
\end{defi}

\begin{prop}\label{l06}
	
	Let $A$ be an abelian variety over a field $k$ of characteristic zero; $\bar{k}$ an algebraic closure of $k$; $A^{\vee}$ the dual abelian variety [cf. \cite{Mumf}, Chapter III, \S13, Theorem]; $n$ a positive integer. Then there exists a polarization (i.e., an isogeny)
	\begin{displaymath}
	\theta:A\to A^{\vee}.
	\end{displaymath}
	Each such polarization induces a natural pairing (i.e., a $\Z$-bilinear map)
	\begin{displaymath}
	\theta_{n}: {}_{n}A_{\bar{k}}\times {}_{n}A_{\bar{k}} \rightarrow \mu_{n}(\bar{k})
	\end{displaymath}
	of abelian groups with $G_{k}$-action.
	Let $l$ be any prime number. Then the preceding pairing induces a pairing (i.e., a $\Z_{l}$-bilinear map)
	\begin{displaymath}
	\theta_{\Z_{l}}: \T_{\Z_{l}}(A_{\bar{k}})\times \T_{\Z_{l}}(A_{\bar{k}}) \rightarrow \mu_{\Z_{l}}(\bar{k})
	\end{displaymath}
	of topological abelian groups, which induces a \emph{non-degenerate} bilinear form
	\begin{displaymath}
	\theta_{\Q_{l}}: \T_{\Q_{l}}(A_{\bar{k}})\times \T_{\Q_{l}}(A_{\bar{k}}) \rightarrow \mu_{\Q_{l}}(\bar{k})
	\end{displaymath}
	of $\Q_{l}$-vector spaces with $G_{k}$-action.
	We refer to the pairing $\theta_{\Z_{l}}$ as the pro-${l}$ \emph{Weil pairing} associated to $\theta$.
\end{prop}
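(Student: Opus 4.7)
The plan is to deduce the proposition from classical facts concerning the Weil pairing on abelian varieties (cf.\ Mumford, \emph{Abelian Varieties}, Chapter IV, \S20).

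First, I would establish the existence of a polarization $\theta$. Since $A$ is an abelian variety over the field $k$ of characteristic zero, $A$ is projective over $k$, and so admits an ample line bundle $L$ defined over $k$. The associated morphism
\begin{displaymath}
\phi_{L}:A\to A^{\vee},\qquad x\mapsto t_{x}^{*}L\otimes L^{-1},
\end{displaymath}
is then an isogeny of abelian varieties defined over $k$, so we may take $\theta=\phi_{L}$.

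Next, for the construction of the finite-level pairing: there is a canonical, $G_{k}$-equivariant, perfect bilinear Weil pairing
\begin{displaymath}
e_{n}:{}_{n}A(\bar{k})\times {}_{n}A^{\vee}(\bar{k})\to \mu_{n}(\bar{k}),
\end{displaymath}
constructed via the Poincar\'e line bundle on $A\times_{k}A^{\vee}$ or, equivalently, via Kummer theory applied to the multiplication-by-$n$ sequence $0\to {}_{n}A\to A\xrightarrow{n_{A}} A\to 0$. For any polarization $\theta:A\to A^{\vee}$, I would simply set $\theta_{n}(x,y):=e_{n}(x,\theta(y))$; bilinearity is immediate, and $G_{k}$-equivariance follows from the naturality of $e_{n}$ together with the fact that $\theta$ is defined over $k$.

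Finally, for the passage to pro-$l$ Tate modules and the non-degeneracy statement: the pairings $\{e_{n}\}_{n}$ are compatible under the transition morphisms of the inverse systems defining the Tate modules, and hence assemble into a perfect, $G_{k}$-equivariant pairing $\T_{\Z_{l}}(A_{\bar{k}})\times \T_{\Z_{l}}(A^{\vee}_{\bar{k}})\to \mu_{\Z_{l}}(\bar{k})$. Composing with the morphism on pro-$l$ Tate modules induced by $\theta$ on the second factor yields $\theta_{\Z_{l}}$. Since $\theta$ is an isogeny with finite kernel, the induced morphism has kernel and cokernel annihilated by $\deg(\theta)$, hence becomes an isomorphism after tensoring with $\Q_{l}$; the non-degeneracy of $\theta_{\Q_{l}}$ thus follows from the perfectness of the $\Q_{l}$-pairing between $\T_{\Q_{l}}(A_{\bar{k}})$ and $\T_{\Q_{l}}(A^{\vee}_{\bar{k}})$. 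There is no serious obstacle: the main task is to assemble and cite the relevant classical results, with Galois equivariance being automatic from the naturality of every construction involved.
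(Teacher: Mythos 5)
Your proposal is correct and follows essentially the same route as the paper: the paper likewise obtains $\theta$ from the existence of an ample invertible sheaf on $A$ and then cites the classical discussion of the Weil pairing (in Milne's article on abelian varieties, \S 16) for the construction of $\theta_{n}$, $\theta_{\Z_{l}}$, $\theta_{\Q_{l}}$ and the non-degeneracy after inverting $l$. The only difference is that you spell out the composition $e_{n}(x,\theta(y))$ and the isogeny argument for non-degeneracy explicitly, whereas the paper leaves these to the cited reference.
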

\begin{proof}
	The existence of a polarization follows from the fact that any abelian variety admits an ample invertible sheaf [cf. \cite{Milne}, Chapter V, Corollary 7.2; \cite{Milne}, Chapter V, \S13]. The remaining assertions concerning pairings associated to $\theta$ follow immediately from the discussion of \cite{Milne}, \S 16.
	
\end{proof}


	
	
	


	

\begin{prop}\label{l14}
	
	Let $k$ be a field of characteristic zero; $\bar{k}$ an algebraic closure of $k$; $A$ a semi-abelian variety over $k$. Then there exists a natural isomorphism
	\begin{displaymath}
		\Pi_{A_{\bar{k}}}\simto \T(A_{\bar{k}})
	\end{displaymath}
	of profinite abelian groups with $G_{k}$-action.
\end{prop}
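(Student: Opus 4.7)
The plan is to exhibit the multiplication-by-$n$ maps $n_{A_{\bar k}}\colon A_{\bar k}\to A_{\bar k}$ as a cofinal system of connected finite étale Galois covers of $A_{\bar k}$, and to identify the Galois groups with the $n$-torsion.

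First I would observe that since $k$ has characteristic zero, each map $n_{A_{\bar k}}$ is a finite étale isogeny, and the translation action of the kernel ${}_nA(\bar k)$ makes it a Galois cover with $\Gal(A_{\bar k}/A_{\bar k})\simeq {}_nA(\bar k)$. Passing to the inverse limit along the transition maps $mn_{A_{\bar k}}\to n_{A_{\bar k}}$ (given by multiplication by $m$) produces a canonical surjection
\[
\Pi_{A_{\bar k}}\twoheadrightarrow \varprojlim_n {}_nA(\bar k)= \T(A_{\bar k}).
\]
Since the whole construction is defined over $k$ (the group scheme $A$ and the multiplication maps $n_A$ are defined over $k$), this surjection is automatically $G_k$-equivariant for the outer Galois action on $\Pi_{A_{\bar k}}$ and the natural $G_k$-action on $\T(A_{\bar k})$; moreover, $\T(A_{\bar k})$ is abelian, so the surjection factors through $\Pi_{A_{\bar k}}^{\ab}$, and the resulting map is visibly a morphism of profinite abelian groups.

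The key step (and the main obstacle) is cofinality: every connected finite étale cover of $A_{\bar k}$ is dominated by some multiplication-by-$n$ map. For this I would invoke the Serre–Lang theorem in its extended form for semi-abelian varieties: any connected finite étale cover $A'\to A_{\bar k}$ admits a unique structure of semi-abelian variety over $\bar k$ with respect to which $A'\to A_{\bar k}$ is an isogeny of semi-abelian varieties. One way to obtain this is to choose a point of $A'$ above $0_{A}$, use the universal property of the $\comm$-Albanese (applied in the category of semi-abelian varieties) to produce the group structure on $A'$, and then check that the covering becomes a homomorphism. Granting this, any such isogeny is killed by some integer $n$ (take $n$ to be the order of the kernel), so $n_{A_{\bar k}}$ factors through $A'\to A_{\bar k}$, which gives the desired cofinality.

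Putting everything together, the cofinality of the maps $\{n_{A_{\bar k}}\}$ upgrades the surjection above to an isomorphism
\[
\Pi_{A_{\bar k}}\simto \T(A_{\bar k}),
\]
and the $G_k$-equivariance and naturality in $A$ are inherited from the functoriality of the construction, since every morphism of semi-abelian $k$-varieties commutes with multiplication by $n$. This proves the proposition.
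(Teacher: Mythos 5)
Your overall strategy coincides with the paper's: both arguments realize the multiplication maps $n_{A_{\bar k}}$ as Galois covers with group ${}_nA(\bar k)$, pass to the limit to obtain the surjection $\Pi_{A_{\bar k}}\surjto \T(A_{\bar k})$, and then reduce injectivity to the statement that every connected finite \'{e}tale cover $A'\to A_{\bar k}$ admits a commutative group structure for which the covering map is a homomorphism, whence it is dominated by some $n_{A_{\bar k}}$. Your final factorization step (the kernel of the isogeny is killed by its order $n$, so $n_{A_{\bar k}}$ factors through the cover) is essentially identical to the computation at the end of the paper's proof of \cref{l14}.

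The gap is in your treatment of the key lemma, which is precisely the content of \cref{l08} (together with \cref{l09}). You propose to obtain the group structure on $A'$ from the universal property of the Albanese in the category of semi-abelian varieties. But that universal property only yields a morphism $\alpha\colon (A',e')\to(\mathrm{Alb}(A'),0)$ together with a factorization of the covering through the homomorphism $\mathrm{Alb}(A')\to A$; it does not endow $A'$ itself with a group law unless one also proves that $\alpha$ is an isomorphism. That is the actual content of the Serre--Lang theorem and is not a formality: a priori $\mathrm{Alb}(A')$ could have dimension larger than $\dim A'$, and ruling this out amounts to comparing $\pi_1^{\mathrm{ab}}$ (or $H^1$) of $A'$ and $A$, which is dangerously close to circular in the present context, since the abelianness and rank of $\Pi_{A_{\bar k}}$ are what \cref{l14} is establishing. (Moreover, within the paper's own toolkit the Albanese is only constructed for curves, in \cref{l05}.) The paper instead proves the lemma directly: it first establishes $\Pi_{A\times_k A}\simeq \Pi_A\times\Pi_A$ --- nontrivial here because $A$ is not proper, which is why the fibration and log-compactification argument of \cref{l09} is needed for the torus part --- and then lifts the map $(a,b)\mapsto(a,a+b)$ along the covering to construct the group law on $A'$ and verify commutativity and associativity by rigidity of lifts. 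If you replace your sketch by a citation of the semi-abelian Serre--Lang theorem from the literature, the rest of your argument goes through; as written, the one step that carries all the content is asserted rather than proved.
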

\begin{proof}
	
	Without loss of generality, we may assume that $k=\bar{k}$. First, we show that there exists a natural surjection
	\begin{displaymath}
	f:\Pi_{A}\surjto \T(A).
	\end{displaymath}
	For any $n\in \mathbb{N}^{+}$, since $n$ is invertible in $k$, $n_{A}$ is a Galois \etale covering [cf. \cite{SGA3}, Expos\'e $\tt{VI}_{\tt{B}}$, Proposition 1.3, Corollaire 1.4.1]. Hence there exists a surjection
	\begin{displaymath}
	\Pi_{A}\surjto {}_{n}A.
	\end{displaymath}
	Such surjections are compatible with passage to the inverse limit with respect to $n$, hence induce a surjection
	\begin{displaymath}
	\Pi_{A}\surjto \T(A).
	\end{displaymath}
	To show that $f$ is injective, it suffices to show that for any finite Galois \etale covering $g:A'\to A$, there exists an $n\in\mathbb{N}^{+}$ and an \etale covering $h:A\to A'$ such that $g\circ h=n_{A}$. 
	By \cref{l08} below, there exists a \emph{commutative} algebraic group structure on $A'$ such that $g$ is a homomorphism of algebraic groups. Let $n$ be the degree of $g$. Then the kernel of $g$ is contained in the kernel of $n_{A'}$. Therefore, there exists a morphism $h:A\to A'$ such that $h\circ g=n_{A'}$, so $g\circ h\circ g=g\circ n_{A'}=n_{A}\circ g$. Since $g$ is dominant, this implies that $g\circ h=n_{A}$, as desired.
\end{proof}
\begin{remm}
	Note that it follows from the proof of \cref{l14} that, in the notation of this proof, $A'$ is semi-abelian.
\end{remm}

\begin{lemm}\label{l08}
	Let $k$ be an \emph{algebraically closed} field of characteristic zero; $A$ a semi-abelian variety over $k$; $f^{B}_{A}:B\rightarrow A$ a Galois \etale covering of $A$. Then $B$ admits a commutative group scheme structure such that $f^{B}_{A}$ is a homomorphism of algebraic groups.
\end{lemm}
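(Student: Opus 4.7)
The plan is to follow a standard Serre-type strategy: lift the commutative group law on $A$ to $B$ via the covering space lifting criterion, then verify the group axioms on $B$ using the uniqueness of lifts with prescribed base point.

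First, I would choose a $k$-rational point $e_{B}\in B$ lying above the identity $e_{A}\in A$, which exists because $k$ is algebraically closed and $f^{B}_{A}$ is \etale. Set $H\coloneqq (f^{B}_{A})_{*}(\Pi_{B})\subseteq \Pi_{A}$, which is a normal open subgroup since $f^{B}_{A}$ is Galois. The key input is that $\Pi_{A}$ is abelian and that multiplication $m_{A}:A\times A\to A$ induces, under the Künneth isomorphism $\Pi_{A\times A}\simto \Pi_{A}\times \Pi_{A}$, the addition map on $\Pi_{A}$. This follows by an Eckmann--Hilton-style argument from the identities $m_{A}\circ (\id_{A}\times e_{A})=m_{A}\circ (e_{A}\times \id_{A})=\id_{A}$, combined with the Künneth formula for \etale fundamental groups (valid here by comparison with the transcendental fundamental group, since $k$ is algebraically closed of characteristic zero and $A$ is smooth and geometrically connected).

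Since $H$ is a subgroup of the abelian group $\Pi_{A}$, the map $m_{A*}$ sends $H\times H$ into $H$; the covering space lifting criterion for finite \etale covers then produces a unique morphism $m_{B}:B\times B\to B$ satisfying $f^{B}_{A}\circ m_{B}=m_{A}\circ (f^{B}_{A}\times f^{B}_{A})$ and $m_{B}(e_{B},e_{B})=e_{B}$. The analogous argument applied to the inverse morphism $\iota_{A}:A\to A$ yields a unique lift $\iota_{B}:B\to B$ with $\iota_{B}(e_{B})=e_{B}$. Each group axiom on $B$ then reduces to the corresponding axiom on $A$: for instance, both $m_{B}\circ (m_{B}\times \id_{B})$ and $m_{B}\circ (\id_{B}\times m_{B})$ are lifts of $m_{A}\circ (m_{A}\times \id_{A})\circ (f^{B}_{A})^{\times 3}$ which send the triple base point to $e_{B}$, hence coincide by uniqueness; commutativity, the identity law, and the inverse law are checked identically.

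The main obstacle is establishing the commutativity of $\Pi_{A}$ and the relevant Künneth isomorphism without circularly invoking \cref{l14} (whose proof in fact relies on this lemma). The cleanest resolution is to deduce both inputs directly from the topological fundamental group via the Riemann existence theorem in characteristic zero over an algebraically closed base field; no scheme-theoretic group structure on $B$ is required to obtain these inputs. Once they are in hand, the remainder of the construction is formal and relies only on the bijective correspondence between open subgroups of $\Pi_{A}$ and pointed connected \etale covers of $A$.
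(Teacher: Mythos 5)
Your proof is correct, but it takes a genuinely different route from the paper's. Both arguments fix a point $e_B$ over $e_A$ and reduce the group axioms on $B$ to the uniqueness of pointed lifts along a finite \etale covering, but they diverge on the two key inputs. First, for the K\"unneth isomorphism $\Pi_{A\times_k A}\simto \Pi_A\times\Pi_A$: the paper proves it algebraically by writing $A$ as an extension of an abelian variety by a torus, establishing exactness of $1\to\Pi_T\to\Pi_A\to\Pi_X\to 1$ [cf. \cref{l09}], and assembling the product formula from the known cases of split tori and proper schemes; you instead import it --- together with the commutativity of $\Pi_A$ and the identification of $(m_A)_*$ with the group law --- from the topological fundamental group via Riemann existence and the invariance of $\pi_1$ under extension of algebraically closed fields of characteristic zero (stated somewhat briskly, but standard: spread out to a finitely generated subfield and embed it into $\mathbb{C}$). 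Second, the paper never lifts $m_A$ itself: it lifts the shear automorphism $g_A:(a,b)\mapsto(a,a+b)$ of $A\times_k A$, using only the formal fact that a covering of a product is determined by its restrictions to the two axes together with an identification over the base point, and then extracts $\otimes_B$ and $(-)^{-1}_B$ from $g_B$; this sidesteps any need to know that $(m_A)_*$ is addition or that $\Pi_A$ is abelian, which is precisely the H-space/Eckmann--Hilton input your direct lift of $m_A$ requires (and which you correctly supply --- note that $m_{A*}(H\times H)\subseteq H$ in fact only needs $H$ to be a subgroup, not commutativity). What each approach buys: yours is shorter and applies verbatim to any smooth connected commutative group scheme once the transcendental comparison is granted; the paper's stays entirely within algebraic geometry, which matters for the internal logic of the section since \cref{l14} is deduced from \cref{l08}. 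You correctly identify and defuse the potential circularity with \cref{l14}, so there is no gap.
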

\begin{proof}
	Recall that the group structure on $A$ consists of three data $e_{A}$, $\otimes_{A}$, $(-)^{-1}_{A}$, where $e_{A}:\sp(k)\to A$ is the unit, $\otimes_{A}:A\times_{k} A\to A$ is the multiplication morphism, and $(-)^{-1}_{A}:A\to A$ is the inverse morphism.
	
	First, we prove that there is a natural isomorphism $\Pi_{A\times_{k}A}\simto\Pi_{A}\times \Pi_{A}$. Note that this is not a direct consequence of [\cite{SGA1}, Expos\'e X, Corollaire 1.7] since $A$ is not necessarily proper.
	
	By the definition of the notion of a semi-abelian variety, $A$ may be written as an extension
	\begin{displaymath}
		1\to T\to A\to X\to 1,
	\end{displaymath}
	where $T$ is a torus, and $X$ is an abelian variety. Similarly, $A\times_{k}A$ may be written as an extension
	\begin{displaymath}
	1\to T\times_{k}T\to A\times_{k}A\to X\times_{k}X\to 1,
	\end{displaymath}
	and admits a natural group scheme structure. Consider the commutative diagram of profinite groups
	\begin{displaymath}
		\xymatrix{
			1\ar[r] &\Pi_{T\times_{k}T}\ar[r]\ar[d] & \Pi_{A\times_{k}A}\ar[r]\ar[d] & \Pi_{X\times_{k}X}\ar[d]\ar[r] & 1\\
			1\ar[r] &\Pi_{T}\times \Pi_{T}\ar[r] & \Pi_{A}\times \Pi_{A}\ar[r] & \Pi_{X}\times \Pi_{X}\ar[r] & 1,
		}
	\end{displaymath}
	where the vertical arrows are the natural homomorphisms. By \cref{l09} below, the horizontal sequences are exact. Since $T$ is a split torus, the left-hand vertical arrow is an isomorphism. Since $X$ is proper, the right-hand vertical arrow is an isomorphism [cf. \cite{SGA1}, Expos\'e X, Corollaire 1.7]. Therefore, the middle vertical arrow determines a natural isomorphism $\Pi_{A\times_{k}A}\simto\Pi_{A}\times \Pi_{A}$. A similar argument shows that we have a natural isomorphism $\Pi_{A\times_{k}A\times_{k}A}\simto\Pi_{A}\times \Pi_{A}\times \Pi_{A}$.
	
	Let $f^{B}_{A}:B\to A$ be a \emph{Galois} \etale covering. Fix a point $e_{B}:\sp(k)\to B$ that lies in the preimage of $e_{A}$.
	
	Denote by $g_{A}:A\times_{k} A \simto A\times_{k} A$ the 
	isomorphism determined by mapping $(a,b)\mapsto (a,a+b)$. We claim that there exists an isomorphism $g_{B}:B\times_{k} B \simto B\times_{k} B$ such that the diagram 
	\begin{displaymath}
		\xymatrix{
			B\times_{k} B\ar^{g_{B}}[r]\ar^{f^{B}_{A}\times_{k}f^{B}_{A}}[d] & B\times_{k} B\ar^{f^{B}_{A}\times_{k}f^{B}_{A}}[d]\\
			A\times_{k} A\ar^{g_{A}}[r] & A\times_{k} A
		}
	\end{displaymath}
	commutes. Indeed, consider the cartesian diagram
	\begin{displaymath}
		\xymatrix{
			B\times_{k} B\ar^{h_{1}}[r]\ar^{h_{2}}[d] & B\times_{k} B\ar^{f^{B}_{A}\times_{k}f^{B}_{A}}[d]\\
			A\times_{k} A\ar^{g_{A}}[r] & A\times_{k} A
		}
	\end{displaymath}
	determined by $g_{A}$ and $f^{B}_{A}\times_{k}f^{B}_{A}$.
	Next, observe that there exist isomorphisms of the restrictions to $A\times_k e_A$ and $e_A\times_k A$ of the finite \'etale coverings of $A\times_k A$ determined by $f^B_A\times_k f^B_A$ and $h_2$ such that the restrictions of these isomorphisms coincide after further restriction to $e_A\times_k e_A$.  Thus, it follows formally from the existence of the natural isomorphism $\Pi_{A\times_k A}\simto \Pi_A\times \Pi_A$ proven above that the finite \'etale coverings of $A\times_k A$ determined by $f^B_A\times_k f^B_A$ and $h_2$ are isomorphic, hence that (for a suitable choice of $h_2$) one may take $g_B$ to be $h_1$.  Finally, by composing with a suitable deck transformation  of the finite \'etale covering of $A\times_k A$ determined by $f^B_A\times_k f^B_A$, we may assume that $g_B(e_B\times_k e_B)=e_B\times_k e_B$.

	
	By construction, $\pr^{1}_{B}\circ g_{B}$ lies over $\pr^{1}_{A}$ (with respect to the projections $f^{B}_{A}\times_{k}f^{B}_{A}$ and $f^{B}_{A}$). Since $g_B(e_B\times_k e_B)=e_B\times_k e_B$, we thus conclude that $\pr^{1}_{B}\circ g_{B}=\pr^{1}_{B}$. Therefore, since $g_{B}$ is an isomorphism, $g_{B}^{-1}(B\times e_B)$ is a graph, hence defines a morphism $(-)^{-1}_{B}:B\to B$. One sees immediately that $(-)^{-1}_{B}(e_{B}) =e_{B}$.

	
	Write $\otimes_{B}\coloneqq \pr^{2}_{B}\circ g_{B}$. Denote by $\tau$ the natural automorphism of $B\times_{k} B$ obtained by switching the two factors. Since $A$ is a commutative group scheme, both $\otimes_{B}$ and $\otimes_{B}\circ \tau$ lie over $\otimes_{A}$ (with respect to the projections $f^{B}_{A}\times_{k}f^{B}_{A}$ and $f^{B}_{A}$). Since $g_B(e_B\times_k e_B)=e_B\times_k e_B$, we thus conclude that $\otimes_{B}=\otimes_{B}\circ \tau$. Therefore, the binary operation determined by $\otimes_{B}$ is commutative. Similar arguments show that the binary operation determined by $\otimes_{B}$ is associative, and that $e_{B}$ is a unit. Therefore, the data $e_{B}$, $\otimes_{B}$, $(-)^{-1}_{B}$ defines a commutative group scheme structure on $B$, which is compatible with $f^{B}_{A}$.

\end{proof}

\begin{lemm}\label{l09}
	Let $A$ be a semi-abelian scheme over an algebraically closed field $k$ of characteristic zero. Let 
	\begin{displaymath}
	1\to T\to A\to X\to 1
	\end{displaymath}
	be an extension of group schemes [cf. \cite{SGA3}, Expos\'e $\text{VI}_{\text{A}}$, Th\'eor\`eme 5.4] such that $T$ is a torus. Then the sequence
	\begin{displaymath}
		1\to \Pi_{T}\to \Pi_{A}\to \Pi_{X}\to 1
	\end{displaymath}
	is exact.
\end{lemm}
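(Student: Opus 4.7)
The plan is to reduce to the case $T = \mathbb{G}_m$ via induction on $\dim T$, handle that case via compactification and purity, and separately verify injectivity using the multiplication-by-$n$ isogeny of $A$. Since $k$ is algebraically closed of characteristic zero, $T\simeq \mathbb{G}_m^d$ is a split torus. Writing $T = \mathbb{G}_m \times T'$ and setting $A' \coloneqq A/\mathbb{G}_m$, we obtain a $\mathbb{G}_m$-torsor $A \to A'$ and a $T'$-torsor $A' \to X$. Applying the base case $d = 1$ to $A \to A'$ and induction to $A' \to X$, the two resulting short exact sequences of $\pi_1$'s combine, via a diagram chase using the fact that $\Pi_T \simeq \Pi_{\mathbb{G}_m} \times \Pi_{T'}$ (established as in the proof of \cref{l08}), to yield the desired exact sequence.

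For $T = \mathbb{G}_m$, the torsor $A \to X$ is classified by an element of $\Pic(X)$, so $A = L \setminus 0_L$ for some line bundle $L$ on $X$. Compactify to the $\mathbb{P}^1$-bundle $\bar L \coloneqq \mathbb{P}(\mathcal{O}_X \oplus L) \to X$, so that $A = \bar L \setminus (s_0 \cup s_\infty)$ is the complement of the zero and infinity sections (disjoint smooth codimension-one subschemes). Since $\bar L \to X$ is proper smooth with simply connected geometric fibers $\mathbb{P}^1_{k}$, \cite{SGA1}, Expos\'e X, Corollaire 1.7, yields $\Pi_{\bar L} \simto \Pi_X$. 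Zariski--Nagata purity applied to $A \hookrightarrow \bar L$ identifies $\ker(\Pi_A \surjto \Pi_{\bar L})$ with the closed normal subgroup of $\Pi_A$ generated by inertia subgroups $I_0, I_\infty$ at the generic points of $s_0, s_\infty$, each isomorphic to $\hat{\mathbb{Z}}(1) \simeq \Pi_{\mathbb{G}_m}$.

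To see that this kernel coincides with the image of $\Pi_T \to \Pi_A$: since $k$ is algebraically closed, the monodromy action of $\Pi_X$ on $\Pi_T = \hat{\mathbb{Z}}(1)$ is trivial, so the image of $\Pi_T$ in $\Pi_A$ is central, hence normal. Restricting to the fiber over a geometric point, the inclusion $\mathbb{G}_m \hookrightarrow \mathbb{P}^1$ together with the functoriality of inertia under the fiber inclusion show that $\Pi_T \to \Pi_A$ sends the topological generator of $\Pi_T$ (the loop around $0$) onto a topological generator of $I_0$; since the same profinite-cyclic group $\Pi_{\mathbb{G}_m}$ is also generated by the loop around $\infty$ (the inverse of the loop around $0$), the image likewise contains $I_\infty$. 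Thus the image of $\Pi_T$ is a normal subgroup of $\Pi_A$ containing both $I_0$ and $I_\infty$, hence contains their normal closure, which by purity is the full kernel of $\Pi_A \surjto \Pi_X$; the reverse inclusion is clear since $T \to A \to X$ factors through a point.

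Finally, for injectivity of $\Pi_T \to \Pi_A$: since $A$ is semi-abelian in characteristic zero, $n_A : A \to A$ is a finite \'etale Galois cover with Galois group ${}_nA(k)$ [cf.\ \cite{SGA3}, Expos\'e $\tt{VI}_{\tt{B}}$, Proposition 1.3, Corollaire 1.4.1], giving a surjection $\Pi_A \surjto {}_nA(k)$. The exact sequence $1 \to T \to A \to X \to 1$ yields an injection ${}_nT \hookrightarrow {}_nA$ (using that $n_T$ is surjective on geometric points), so the composite $\Pi_T \to \Pi_A \to {}_nA$ factors as $\Pi_T \surjto {}_nT \hookrightarrow {}_nA$, whose kernel is $n\Pi_T$. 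Intersecting over all $n \geq 1$ gives $\ker(\Pi_T \to \Pi_A) = 0$. The main obstacle will be the careful identification in the third paragraph --- namely, verifying via the local-at-$s_0$ computation that the inertia $I_0$ appearing in the purity statement is indeed the image under the fiber inclusion of the inertia in $\Pi_T = \Pi_{\mathbb{G}_m}$ at the puncture $0 \in \mathbb{P}^1$, so that the normal closure produced by purity is already normally generated by the image of $\Pi_T$ itself.
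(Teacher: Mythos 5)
Your overall architecture (reduction to $T=\G_{m}$, compactification to a $\mathbf{P}^{1}$-bundle, surjectivity from $\Pi_{\bar L}\simeq\Pi_{X}$, and the injectivity argument via the coverings $n_A$ restricting to $n_T$) is sound, and the injectivity and surjectivity portions agree with the paper's proof. The genuine gap is in your third paragraph, at the step ``the monodromy action of $\Pi_{X}$ on $\Pi_{T}=\hat{\mathbb{Z}}(1)$ is trivial, so the image of $\Pi_{T}$ in $\Pi_{A}$ is central, hence normal.'' Purity only tells you that $\ker(\Pi_{A}\to\Pi_{X})$ is the \emph{normal closure} of $I_{0}\cup I_{\infty}$, i.e., of the image of $\Pi_{T}$; to conclude that this normal closure equals the image itself you must know that $\mathrm{im}(\Pi_{T})$ is normal in $\Pi_{A}$. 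But that normality is precisely the content of exactness in the middle of the homotopy sequence, which is what you are trying to prove: the ``monodromy action of $\Pi_{X}$ on $\Pi_{T}$'' is only defined as the conjugation action of $\Pi_{A}$ on $\ker(\Pi_{A}\to\Pi_{X})$ \emph{after} one knows that this kernel coincides with the image of $\Pi_{T}$, and the triviality of the cyclotomic action over $\bar k$ says nothing about whether conjugation by elements of $\Pi_{A}$ preserves the subgroup $\mathrm{im}(\Pi_{T})$ in the first place. For a general open subvariety fibered in copies of $\G_{m}$ the images of the fundamental groups of the fibers need not be normal; what rules this out here is the torsor/group structure, which your argument does not use at this step.

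This is exactly the point the paper's proof outsources: it equips $\bar A$ with the log structure defined by the complement $\bar A\setminus A$ and invokes the homotopy exact sequence for the log smooth morphism $\bar A^{\log}\to X^{\log}$ [\cite{Hos}, \S 3, Theorem 2], which yields $\Pi_{T}\to\Pi_{A}\to\Pi_{X}\to 1$ including exactness in the middle. To repair your argument you would need an independent proof that $\mathrm{im}(\Pi_{T})$ is normal --- for instance via a K\"unneth isomorphism $\Pi_{\G_{m}\times_{k}A}\simeq\Pi_{\G_{m}}\times\Pi_{A}$ combined with the translation action of $T$ on $A$, or via the abelianness of $\Pi_{A}$ --- but note that the paper derives those very statements (\cref{l08}, \cref{l14}) \emph{from} the present lemma, so such a repair must be carried out with care to avoid circularity.
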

\begin{proof}
	Since $k$ is algebraically closed, $T$ is a split torus. Thus, by induction, we reduce to the case where $T\simeq \G_{m}$.
	Since $A\to X$ is a $\G_{m}$-torsor in the \'etale topology, the natural action of $\G_{m}$ on $\mathbf{P}^{1}_{k}$ determines a natural partial compactification of $A\to X$ to a $\mathbf{P}^{1}_{k}$-bundle $\bar{A} \to X$ in the \'etale topology such that the complement $\bar{A}\setminus A$ is a relative divisor with normal crossings over $X$, hence determines a log structure on $\bar{A}$.  Thus, if we denote the resulting log scheme by $\bar{A}^\log$, then we obtain a log smooth morphism $\bar{A}^\log \to X^\log$, where we equip $X$ with the trivial log structure. By [\cite{Hos}, \S3, Theorem 2], we obtain an exact sequence
	\begin{displaymath}
		\Pi_{T}\to \Pi_{A}\to \Pi_{X}\to 1.
	\end{displaymath}
	Finally, since for $n\in\mathbb{N}^{+}$, $n_A$ is a Galois \'etale covering [cf. \cite{SGA3}, Expos\'e $\tt{VI}_{\tt{B}}$, Proposition 1.3, Corollaire 1.4.1] that restricts to $n_T$ on $T$, we conclude that the natural homomorphism $\Pi_T\to\Pi_A$ is injective. This completes the proof of \cref{l09}.
\end{proof}

By \cref{l14}, the \etale fundamental group of a semi-abelian variety may be naturally identified with its Tate module. Next, we review the language of \emph{cyclotomic characters} and \emph{weights}.

\begin{defi}\label{l43}\quad
	
	(i) Let $G$ be a profinite group; $l$ a prime number; $\chi_{G}$, $f$ two torsion-free characters (i.e., two continuous homomorphisms $G\rightarrow (\mathbb{Z}_{l}^{\times})^{\tf}$ of profinite groups) of $G$. Then we say that $f$ is of \emph{weight} $w$ relative to $\chi_{G}$ if there exist $a,b\in \mathbb{Z}$, $b\neq 0$ , such that $w=\frac{2a}{b}$, and $(f^{\abt})^{b}=((\chi_{G})^{\abt})^{a}$. (Here, the ``$2$'' appears in order to ensure that the  weights of the characters
	that we are interested in are integers.  This convention is consistent with
	the usual conventions concerning weights that are applied in the context of the Riemann Hypothesis over finite fields.) In general, $f$ might not be of weight $w$ for any rational number $w$.
	On the other hand, if the image of $\chi_G$ is {\it open}, then there exists
	at most one rational number $w$, which we shall denote $w_{\chi_G}(f)$
	when it exists,  such that $f$ is of weight $w$ relative to $\chi_G$.
	
	(ii) Suppose further that $G$ is an \etale fundamental group of a scheme $X$ such that $l$ is invertible in $X$ (respectively, the virtual fundamental group of a pointed virtual curve $\mathcal{C}:(X,Y,f,k,\bar{k},t,s)$ such that $l$ is invertible in $k$). Denote by $\cycl_{G}^{l}:G\rightarrow (\Z_{l}^{\times})^{\tf}$ the (torsion-free) \emph{$l$-adic cyclotomic character} of $G$ (respectively, the composite of the \emph{$l$-adic cyclotomic character} on $k$ with the surjection $G=\Pi_{\mathcal{C}}\surjto G_{k}$). We shall say that $G$, or, alternatively, $X$ (respectively, $\mathcal{C}$), is \emph{$l$-cyclotomically full} if the image of $\cycl_{G}^{l}$ is open. Thus, if $G$ is $l$-cyclotomically full, then for any character $f: G\rightarrow (\mathbb{Z}_{l}^{\times})^{\tf}$ that is of weight $w$ relative to $\cycl_{G}^{l}$ for some rational number $w$, the rational number $w_{\cycl_{G}^{l}}(f)$ is well-defined. 
	
	(iii) Let $H$ be a \emph{topologically finitely generated} profinite group. Suppose that we are given a \emph{continuous} homomorphism $f:G\rightarrow \Aut(H)$. Then $f$ induces a natural action of $G$ on the finitely generated $\Z_{l}$-module $H^{\ab}\otimes_{\hat{\Z}}\Z_{l}$. Thus, $f$ induces a continuous determinate homomorphism $G\rightarrow \Z_{l}^{\times}$. Denote by
	\begin{displaymath}
	\Det_{H}^{l}:G\rightarrow (\mathbb{Z}_{l}^{\times})^{\tf}
	\end{displaymath}
	the resulting homomorphism to the torsion-free quotient. If $G$ is an $l$-cyclotomically full group of the sort considered in (ii), then we shall abbreviate $ w_{\cycl_{G}^{l}}(\Det_{H}^{l})$ as $w^{l}_{G/H}$.
	
	(iv) Let $M$ be a finitely generated $\Z_{l}$-module. Then denote by $\Rank_{l}(M)$ the $\Q_{l}$-dimension of $M\otimes_{\Z_{l}}\Q_{l}$.
\end{defi}

\begin{prop}\label{l15}
	Let $G$ a profinite group; $l$ be a prime number; $M$, $N$ finitely generated $\hat{\Z}$-modules with continuous $G$-actions such that $\Rank_{l}(M)=n$, $\Rank_{l}(N)=1$. Suppose that we are given a $\hat{\Z}$-bilinear $G$-equivariant map $f:M\times M\rightarrow N$ (i.e., $f(g(a),g(b))=g(f(a,b))$ for any $a,b\in M$, $g\in G$) that induces an isomorphism $M\otimes_{\Z_{l}}\Q_{l}\simeq \Hom(M\otimes_{\Z_{l}}\Q_{l}, N\otimes_{\Z_{l}}\Q_{l})$ of $\Q_{l}$-linear spaces. Moreover, suppose that the image of $\Det_{N}^{l}: G\to (\Z_{l}^{\times})^{\tf}$ is open. Then
	\begin{displaymath}
	w^{l}_{\Det_{N}^{l}}(\Det_{M}^{l})=n.
	\end{displaymath}
\end{prop}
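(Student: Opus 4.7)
The plan is to reduce the statement to a routine multilinear-algebra computation of determinant characters. First I would pass to $\Q_l$-coefficients: set $V \coloneqq (M\otimes_{\hat{\Z}}\Z_l)\otimes_{\Z_l}\Q_l$ and $L \coloneqq (N\otimes_{\hat{\Z}}\Z_l)\otimes_{\Z_l}\Q_l$, which are $\Q_l[G]$-modules of dimensions $n$ and $1$, respectively. The $G$-equivariance of $f$ combined with the non-degeneracy hypothesis yields a $\Q_l[G]$-linear isomorphism
\begin{displaymath}
V\;\simto\;\Hom_{\Q_l}(V,L)\;=\;V^{\ast}\otimes_{\Q_l}L.
\end{displaymath}
Taking the top exterior power of each side and invoking the standard identity $\det(V^{\ast}\otimes L)\simeq(\det V)^{-1}\otimes L^{\otimes n}$ (valid because $\dim V = n$ and $\dim L = 1$) yields an isomorphism $\det V\simeq(\det V)^{-1}\otimes L^{\otimes n}$ of one-dimensional $\Q_l[G]$-modules, i.e., an equality
\begin{displaymath}
(\det_{V})^{2}\;=\;(\det_{L})^{n}
\end{displaymath}
of characters $G\to\Q_l^{\times}$, where $\det_{V}$, $\det_{L}$ denote the $\Q_l$-determinants of the $G$-actions on $V$, $L$.

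Next I would identify these $\Q_l$-determinant characters with the torsion-free determinants $\Det_{M}^{l}$, $\Det_{N}^{l}$ of \cref{l43}(iii). Since $G$ preserves the lattices $M\otimes_{\hat{\Z}}\Z_l\subset V$ and $N\otimes_{\hat{\Z}}\Z_l\subset L$, the characters $\det_{V}$ and $\det_{L}$ take values in $\Z_l^{\times}$; moreover, by multiplicativity of the determinant on the short exact sequence $0\to T\to M\otimes_{\hat{\Z}}\Z_l\to (M\otimes_{\hat{\Z}}\Z_l)/T\to 0$ (with $T$ the $\Z_l$-torsion submodule, on which $G$ acts through a finite quotient, hence contributes only a root of unity to the determinant), the composite of $\det_{V}$ with $\Z_l^{\times}\surjto(\Z_l^{\times})^{\tf}$ coincides by construction with $\Det_{M}^{l}$, and analogously for $N$. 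Hence the identity above descends to
\begin{displaymath}
(\Det_{M}^{l})^{2}\;=\;(\Det_{N}^{l})^{n}
\end{displaymath}
in $(\Z_l^{\times})^{\tf}$. Applying \cref{l43}(i) with $a=n$, $b=2$, and therefore $w=2a/b=n$, this exhibits $\Det_{M}^{l}$ as being of weight $n$ relative to $\Det_{N}^{l}$; the assumed openness of the image of $\Det_{N}^{l}$ then guarantees that the weight is unique, so that $w^{l}_{\Det_{N}^{l}}(\Det_{M}^{l})$ is well-defined and equals $n$.

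I do not anticipate any serious obstacle, since the argument is essentially formal multilinear algebra. The one delicate point is the compatibility between the ``integral'' character $\Det_{M}^{l}$ defined via $M\otimes_{\hat{\Z}}\Z_l$ and the ``rational'' character $\det_{V}$ defined on $V$; but this is a tautology once finite $l$-torsion contributions are killed upon passage to $(\Z_l^{\times})^{\tf}$. The slightly more conceptual observation worth highlighting is that the factor of $2$ in the definition of weight is precisely what matches the squaring forced by self-duality of $V$ (up to the twist by $L$), which is why the quantity $w = n$ (and not $n/2$) comes out integral.
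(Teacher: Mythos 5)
Your proposal is correct and follows essentially the same route as the paper's own proof: both dualize the non-degenerate pairing to get $M\otimes\Q_l\simeq \Hom(M,N)\otimes\Q_l$, pass to top exterior powers to obtain $(\Det_M^l)^2=(\Det_N^l)^n$, and then read off the weight $n$ from the factor-of-$2$ convention in \cref{l43}(i). Your added remark on the compatibility of the integral and rational determinant characters is a harmless elaboration of a point the paper leaves implicit.
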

\begin{proof}
	By assumption, $f$ induces a natural isomorphism
	\begin{displaymath}
	M\otimes_{\Z_{l}}\Q_{l}\simeq \Hom_{\Z_{l}}(M,\Z_{l})\otimes_{\Z_{l}}N\otimes_{\Z_{l}}\Q_{l}.
	\end{displaymath} 
	By taking the $n$-th exterior product, we obtain an isomorphism
	\begin{displaymath}
	\bigwedge^{n}M\otimes_{\Z_{l}}\Q_{l}\simeq \bigwedge^{n}\Hom_{\Z_{l}}(M,\Z_{l})\otimes_{\Z_{l}}N^{\otimes n}\otimes_{\Z_{l}}\Q_{l}.
	\end{displaymath}
	Note that since $\Q_{l}$ is of characteristic zero, we have a natural isomorphism
	\begin{displaymath}
	\bigwedge^{n}\Hom_{\Z_{l}}(M,\Z_{l})\otimes_{\Z_{l}}\Q_{l}\simeq \Hom_{\Z_{l}}(\bigwedge^{n}M,\Z_{l})\otimes_{\Z_{l}}\Q_{l}.
	\end{displaymath}
	Thus we obtain a $G$-equivariant isomorphism
	\begin{displaymath}
	\bigwedge^{n}M\otimes_{\Z_{l}}\bigwedge^{n}M\otimes_{\Z_{l}}\Q_{l}\simeq N^{\otimes n}\otimes_{\Z_{l}}\Q_{l},
	\end{displaymath}
	which implies that
	\begin{displaymath}
	2 w^{l}_{\Det_{N}^{l}}(\Det_{M}^{l})=n w^{l}_{\Det_{N}^{l}}(\Det_{N}^{l})=2n,
	\end{displaymath}
	as desired.
\end{proof}

\begin{prop}\label{l16}
	Let $G$ be a profinite group; $l$ a prime number; 
	\begin{displaymath}
	1\rightarrow M'\rightarrow M\rightarrow M''\rightarrow 1
	\end{displaymath}
	a $G$-equivariant exact sequence of finitely generated ${\Z}_{l}$-modules with $G$-actions. Let $\chi_{G}:G\rightarrow (\mathbb{Z}_{l}^{\times})^{\tf}$ be any character with open image. Then:
	
	(i) If any two of $ w^{l}_{\chi_{G}}(\Det_{M}^{l})$, $ w^{l}_{\chi_{G}}(\Det_{M'}^{l})$ and $ w^{l}_{\chi_{G}}(\Det_{M''}^{l})$ are well-defined, then all three are well-defined.
	
	(ii) In the situation of (i), 
	\begin{displaymath}
	w^{l}_{\chi_{G}}(\Det_{M}^{l})= w^{l}_{\chi_{G}}(\Det_{M'}^{l})+ w^{l}_{\chi_{G}}(\Det_{M''}^{l}).
	\end{displaymath}
\end{prop}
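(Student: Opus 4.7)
The plan is to prove (i) and (ii) together by first establishing the multiplicativity of the determinant character in short exact sequences, and then deducing the statements on weights from this multiplicativity together with the torsion-freeness of $(\Z_{l}^{\times})^{\tf}$.

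First, I would show that as characters $G\to (\Z_{l}^{\times})^{\tf}$, one has
$$\Det_{M}^{l}=\Det_{M'}^{l}\cdot \Det_{M''}^{l}.$$
Indeed, since $\Q_{l}$ is flat over $\Z_{l}$, tensoring the given exact sequence with $\Q_{l}$ yields a short exact sequence of finite-dimensional $\Q_{l}[G]$-modules. For any $g\in G$, choosing a $\Q_{l}$-basis of $M\otimes_{\Z_{l}}\Q_{l}$ adapted to the subspace $M'\otimes_{\Z_{l}}\Q_{l}$, the matrix of the action of $g$ is block upper triangular with diagonal blocks representing the actions on $M'\otimes_{\Z_{l}}\Q_{l}$ and $M''\otimes_{\Z_{l}}\Q_{l}$. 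Hence the determinant factors as asserted, and the resulting equality in $\Z_{l}^{\times}$ descends to the torsion-free quotient $(\Z_{l}^{\times})^{\tf}$.

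For (ii), suppose that all three weights are well-defined. Write $w'=2a'/b'$ and $w''=2a''/b''$ with $(\Det_{M'}^{l})^{b'}=\chi_{G}^{a'}$ and $(\Det_{M''}^{l})^{b''}=\chi_{G}^{a''}$. Raising the multiplicativity identity to the power $N\coloneqq b'b''$ and substituting gives
$$(\Det_{M}^{l})^{N}=(\Det_{M'}^{l})^{N}\cdot (\Det_{M''}^{l})^{N}=\chi_{G}^{a'b''+a''b'},$$
so $\Det_{M}^{l}$ is of weight $2(a'b''+a''b')/N=w'+w''$ relative to $\chi_{G}$. Uniqueness of the weight (and hence the equality $w^{l}_{\chi_{G}}(\Det_{M}^{l})=w'+w''$) follows from the hypothesis that $\chi_{G}$ has open image, combined with the fact that $(\Z_{l}^{\times})^{\tf}\cong \Z_{l}$ is torsion-free: any relation $\chi_{G}^{c}=1$ forces $c=0$, since the open image of $\chi_{G}$ contains non-torsion elements.

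For (i), the same multiplicativity identity allows one to solve for the weight of any one of $\Det_{M}^{l}$, $\Det_{M'}^{l}$, $\Det_{M''}^{l}$ given the other two: after raising to a suitable common positive integer power, the remaining character is expressed as an explicit integer power of $\chi_{G}$, so its weight is well-defined and given by the expected $\Q$-linear combination of the other two. The main potential obstacle throughout is ensuring that no torsion phenomenon in the codomain $(\Z_{l}^{\times})^{\tf}$ interferes with the arithmetic of weights, but this is precisely controlled by the open-image hypothesis on $\chi_{G}$ together with the torsion-freeness built into the target.
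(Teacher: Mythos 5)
Your proof is correct and follows essentially the same route as the paper: the paper deduces both assertions from the natural isomorphism $\bigwedge^{\Rank_{l}M}M\simeq \bigwedge^{\Rank_{l}M'}M'\otimes\bigwedge^{\Rank_{l}M''}M''$ of highest exterior powers, which is precisely the multiplicativity $\Det_{M}^{l}=\Det_{M'}^{l}\cdot\Det_{M''}^{l}$ that you establish via block-triangular matrices over $\Q_{l}$. Your subsequent weight arithmetic (raising to the power $b'b''$ and invoking uniqueness of weights from the open-image hypothesis and torsion-freeness of $(\Z_{l}^{\times})^{\tf}$) simply spells out the details that the paper declares to ``follow immediately.''
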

\begin{proof}
	Assertions (i) and (ii) follow immediately from the natural isomorphism
	\begin{displaymath}
	\bigwedge^{\Rank_{l}{M}}M\simeq \bigwedge^{\Rank_{l}{M'}}M'\otimes\bigwedge^{\Rank_{l}{M''}}M''
	\end{displaymath}
	of highest exterior powers.
\end{proof}

\begin{prop}\label{l17}
	Let $G$ be a profinite group; $l$ a prime number; $\chi_{G}:G\rightarrow (\mathbb{Z}_{l}^{\times})^{\tf}$ a character with open image; $H$ an \emph{open} subgroup of $G$; $\chi_{H}:H\rightarrow (\mathbb{Z}_{l}^{\times})^{\tf}$ the restriction of $\chi_{G}$ to $H$; $f_{G}:G\rightarrow (\mathbb{Z}_{l}^{\times})^{\tf}$ a character; $f_{H}:H\rightarrow (\mathbb{Z}_{l}^{\times})^{\tf}$ the restriction of $f_{G}$ to $H$. Then:
	
	(i) $ w^{l}_{\chi_{G}}(f_{G})$ is well-defined $\Longleftrightarrow$ $ w^{l}_{\chi_{H}}(f_{H})$ is well-defined.
	
	(ii) In the situation of (i),
	\begin{displaymath}
	w^{l}_{\chi_{G}}(f_{G})= w^{l}_{\chi_{H}}(f_{H}).
	\end{displaymath}
\end{prop}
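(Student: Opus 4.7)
The plan is to handle both parts in one stroke by proving the equivalence $(f_G)^b = (\chi_G)^a \iff (f_H)^b = (\chi_H)^a$ for any integers $a, b$ with $b \neq 0$, from which both (i) and (ii) follow at once.

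I would begin with a preparatory observation: since $H$ is open of finite index in $G$ and $\chi_G(G)$ is open in $(\mathbb{Z}_l^\times)^{\tf}$, the image $\chi_H(H) = \chi_G(H)$ has finite index in $\chi_G(G)$ and is therefore open in $(\mathbb{Z}_l^\times)^{\tf}$. Hence the uniqueness clause in \cref{l43}(i) applies to $\chi_H$ as well, so $w^l_{\chi_H}(f_H)$ is well-defined whenever the defining relation holds.

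The forward implication is immediate: if $(f_G^{\abt})^b = ((\chi_G)^{\abt})^a$, then restricting this equality of continuous homomorphisms $G \to (\mathbb{Z}_l^\times)^{\tf}$ to the subgroup $H$ yields $(f_H^{\abt})^b = ((\chi_H)^{\abt})^a$, so the two weights exist simultaneously and coincide.

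The reverse implication is the only substantive step. Assume $(f_H)^b = (\chi_H)^a$, and consider the continuous homomorphism
\begin{displaymath}
\phi \colon G \to (\mathbb{Z}_l^\times)^{\tf}, \qquad \phi(g) \coloneqq f_G(g)^b \cdot \chi_G(g)^{-a},
\end{displaymath}
which by hypothesis satisfies $\phi|_H = 1$. Since $(\mathbb{Z}_l^\times)^{\tf}$ is abelian, $\phi$ factors through $G^{\ab}$; since $\phi$ is trivial on $H$, it further factors through the abelian quotient $G/(H \cdot [G,G])$. The index $[G : H\cdot [G,G]]$ divides $[G:H]$, which is finite, so $\phi^{[G:H]} = 1$ identically on $G$. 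But $(\mathbb{Z}_l^\times)^{\tf}$ is torsion-free by construction, so $\phi$ is trivial, giving $(f_G)^b = (\chi_G)^a$. This proves (i) in the remaining direction, and also establishes (ii), since both weights equal $2a/b$.

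I expect no real obstacle here. The one place that merits care is the passage from $\phi|_H = 1$ to triviality of $\phi$ on all of $G$: a naive argument would require $H$ to be normal, but working modulo the commutator subgroup (so that the ``cokernel'' $G/(H \cdot [G,G])$ is automatically a group of finite order) together with torsion-freeness of the target circumvents this entirely. All other manipulations are direct unwindings of \cref{l43}(i).
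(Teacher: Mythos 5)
Your proof is correct and is essentially the paper's argument in a light disguise: the paper reduces to the torsion-free abelianization and observes that $g^{n}\in H$ for $n=[G:H]$, so that $(f_H)^b=(\chi_H)^a$ forces $(f_G)^{nb}=(\chi_G)^{na}$, while you package the same finite-index-plus-torsion-freeness mechanism into the ratio character $\phi=f_G^{\,b}\chi_G^{-a}$ factoring through the finite group $G/(H\cdot\overline{[G,G]})$. The only (inessential) difference is that your version shows the original pair $(a,b)$ already works for $G$, rather than $(na,nb)$.
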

\begin{proof}
	By definition, $\chi_{H}$, $\chi_{G}$, $f_{G}$, $f_{H}$ factor through $G^{\abt}$. Therefore, without loss of generality we may assume that $G$ is abelian and torsion-free. Suppose that $H$ is of index $n$. If $ w^{l}_{\chi_{G}}(f_{G})$ is well-defined, then it is immediate that $ w^{l}_{\chi_{H}}(f_{H})$ is well-defined. Now suppose that $ w^{l}_{\chi_{H}}(f_{H})$ is well-defined, and that  $(f_{H})^{a}=(\chi_{H})^{b}$. Let $g\in G$. Then $g^{n}\in H$. Thus, for any $g\in G$, $(f_{G})^{na}(g)=(\chi_{G})^{nb}(g)$, so $w^{l}_{\chi_{G}}(f_{G})= w^{l}_{\chi_{H}}(f_{H})$.
\end{proof}

By combining the preceding discussion about weights with the theory of Albanese varieties, one may compute the determinant morphism associated to the geometric fundamental group. In particular, one may compute the weight of the determinant morphism relative to the cyclotomic character.

\begin{theo}\label{l18}
	Let $k$ a field of characteristic zero; $X$ a smooth curve of type $(g,r)$ over $k$; $\bar{k}$ an algebraic closure of $k$; $l$ a prime number such that $X$ is $l$-cyclotomically full [cf. \cref{l43}(ii), where ``$X$'' is taken to be the $X$ of the present discussion]; $\cycl^{l}_{\Pi_{X}}:\Pi_{X}\to (\Z_{l}^{\times})^{\tf}$ the $l$-cyclotomic character; $\Delta_{X}\coloneqq\Pi_{X\times_{k}\bar{k}}$ the geometric fundamental group. Then
	
	(i) $\Delta_{X}$ is topologically finitely generated.
	
	(ii) $w^{l}_{\Pi_{X}/\Delta_{X}}$ is well-defined and equal to 
	\begin{displaymath}
		\begin{cases}
			2g+2r-2 & \tt{if } r\geq 1,\\
			2g & \tt{if } r=0.
		\end{cases}
	\end{displaymath}
	
	(iii) The morphism
	\begin{displaymath}
		\Det_{\Delta_{X}}^{l}:\Pi_{X}\rightarrow (\Z_{l}^{\times})^{\tf}
	\end{displaymath}
	factors through $\cycl_{\Pi_{X}}^{l}$.
	
	(iv) The following equality holds:
	\begin{displaymath}
		\Rank_{l}((\Delta_{X})^{\abt})=
		\begin{cases}
			2g+r-1 & \tt{if } r\geq 1,\\
			2g & \tt{if } r=0.
		\end{cases}
	\end{displaymath}
\end{theo}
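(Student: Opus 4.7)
The plan is to use the Albanese-variety formalism of \cref{l05} together with the Weil pairing of \cref{l06} and the weight calculus of \cref{l15}, \cref{l16}, \cref{l17} to reduce the weight computation to simple linear algebra on Tate modules of semi-abelian varieties.

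For (i), since $\Delta_X$ is the \'etale fundamental group of a smooth curve over the algebraically closed characteristic-zero field $\bar k$, Grothendieck's comparison (invariance under algebraically closed extensions, plus Riemann existence) identifies $\Delta_X$ with the profinite completion of the finitely generated topological fundamental group of a compact Riemann surface with $r$ punctures, which is topologically finitely generated. For (ii) and (iii), by \cref{l17} weights are preserved under passage to (and from) open subgroups, and the openness of the image of the $l$-adic cyclotomic character is also preserved. Hence we may replace $k$ by a finite extension $k'$ such that $X_{k'}$ admits a $k'$-rational point $x$ and, in the case $r \geq 1$, such that every cusp of $X^{\cl} \setminus X$ is $k'$-rational.

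In the proper case $r=0$, \cref{l05}(i) combined with \cref{l14} yields a $G_{k'}$-equivariant isomorphism $(\Delta_X)^{\ab} \simto \T(A_{\bar k})$, where $A$ is the Jacobian. The pro-$l$ Weil pairing of \cref{l06} then supplies a non-degenerate $G_{k'}$-equivariant pairing on $M := \T_{\Z_l}(A_{\bar k})$, of rank $2g$, with values in the rank-one $G_{k'}$-module $N := \mu_{\Z_l}(\bar k)$. Since $\Det_N^l = \cycl^l$, applying \cref{l15} gives $w^l_{\cycl^l}(\Det_{\Delta_X}^l) = 2g$. Assertion (iii) then follows because the codomain $(\Z_l^\times)^{\tf}$ is torsion-free: from $(\Det_{\Delta_X}^l)^b = (\cycl^l)^a$ one reads off $\ker \cycl^l \subset \ker \Det_{\Delta_X}^l$, so $\Det_{\Delta_X}^l$ factors through $\cycl^l_{\Pi_X}$. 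In the affine case $r \geq 1$, \cref{l05}(ii) provides the exact sequence $1 \to B \to A' \to A \to 1$ of semi-abelian varieties over $k'$, with $B$ a split torus of cocharacter rank $r-1$. The induced sequence $0 \to \T_{\Z_l}(B) \to \T_{\Z_l}(A') \to \T_{\Z_l}(A) \to 0$ is exact by \cref{l09}, and \cref{l16} gives additivity of weights. The summand contributed by $\T(A)$ is $2g$ by the proper case; for the torus summand, splitness of $B$ yields $\T_{\Z_l}(B) \cong X_*(B) \otimes \Z_l(1)$ with trivial $G_{k'}$-action on the cocharacter lattice, so $\Det^l_{\T(B)} = (\cycl^l)^{r-1}$ has weight $2(r-1)$, giving the total $2g+2r-2$. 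Part (iv) is an immediate byproduct: the identifications above exhibit $(\Delta_X)^{\ab} \otimes_{\Zhat} \Z_l$ as $\T_{\Z_l}(A')$ (respectively $\T_{\Z_l}(A)$), which is $\Z_l$-free of rank $2g+r-1$ (respectively $2g$).

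The main technical obstacle is the bookkeeping that ensures that the identifications of \cref{l05}, which require a rational point (and, in the affine case, rational cusps), can be transported back to characters of the original $\Pi_X$: one must verify that the restrictions to the open subgroup $\Pi_{X_{k'}} \subset \Pi_X$ of $\Det^l_{\Delta_X}$ and $\cycl^l_{\Pi_X}$ coincide, respectively, with the Albanese-determinant character and the cyclotomic character that appear in the Weil-pairing computation, so that \cref{l17} legitimately descends the computed weights to $\Pi_X$ itself.
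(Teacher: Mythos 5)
Your proposal is correct and follows essentially the same route as the paper: reduce via \cref{l17} to a finite extension where \cref{l05} applies, identify $(\Delta_X)^{\ab}$ with the Tate module of the (semi-abelian) Albanese variety via \cref{l14}, compute the abelian part of the weight from the Weil pairing of \cref{l06} together with \cref{l15}, the toric part from the splitting of the torus, and combine them by the additivity of \cref{l16}. The only (immaterial) divergences are that you prove (i) by Riemann existence and (iv) directly from the Albanese identification, where the paper instead cites \cite{MZK1}, Remark 1.2.2, and disposes of the non-hyperbolic types by inspection.
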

\begin{proof}
	Since any smooth curve admits an open dense subscheme that is isomorphic to a hyperbolic smooth curve, without loss of generalization, one may assume that $X$ is hyperbolic. Then (i) follows immediately from \cite{MZK1}, Remark 1.2.2.
	
	
	For (ii), by \cref{l05}(ii) and \cref{l14}, it suffices to calculate $ w^{l}_{\Pi_{X}/T(A)}$ where $A$ is the Albanese variety of $X$. Let
	\begin{displaymath}
	1\rightarrow A_{\tor}\rightarrow A\rightarrow A_{\abe}\rightarrow 1
	\end{displaymath}
	be the unique exact sequence of $A$ such that $A_{\tor}$ is a torus and $A_{\abe}$ is an abelian variety. Since $A_{\tor}(\bar{k})$ is divisible, this exact sequence induces an exact sequence 
	\begin{displaymath}
	1\rightarrow T(A_{\tor})\rightarrow T(A)\rightarrow T(A_{\abe})\rightarrow 1
	\end{displaymath}
	of Tate modules.
	By \cref{l06} and the definition of Tate module, there exists a $l$-perfect pairing
	\begin{displaymath}
		T(A_{\abe})\times T(A_{\abe}) \rightarrow T(\G_{m}).
	\end{displaymath}
	By definition, the action of $\Pi_{X}$ on $T(\G_{m})\otimes_{\Z_{l}}\Q_{l}\cong \Q_{l}$ is given by \emph{cyclotomic} character, so $\Det^{l}_{T(\G_{m})}=\cycl^{l}_{\Pi_{X}}$. By \cref{l15}, we have $w^{l}_{\Pi_{X}/T(A_{\abe})} =\Rank_{l}(T(A_{\abe}))$. Since $\Rank_{l}(T(A_{\abe}))=2g$ [cf. \cref{l05}(i); \cite{MZK1}, Remark 1.2.2], we obtain
	\begin{displaymath}
		 w^{l}_{\Pi_{X}/T(A_{\abe})}=2g.
	\end{displaymath}
	
	By \cref{l17}, we may assume without loss of generality,
	by replacing $k$ by a suitable finite extension field of $k$, that
	$A_{\tor}$ is isomorphic to a product of copies of $\G_{m}$. Since the action of $\Pi_{X}$ on $T(\G_{m})\otimes_{\Z_{l}}\Q_{l}\cong \Q_{l}$ is given by \emph{cyclotomic} character, by \cref{l16} and \cref{l05}(ii), we conclude that
	\begin{displaymath}
		 w^{l}_{\Pi_{X}/T(A_{\tor})}=\text{max}\{2r-2,0\}.
	\end{displaymath}
	Thus, by \cref{l16},
	\begin{displaymath}
	 w^{l}_{\Pi_{X}/T(A)}=\text{max}\{2g+2r-2,2g\}.
	\end{displaymath}	
	Now (iii) follows immediately from (ii).
		
	Finally, we verify (iv). If $X$ is hyperbolic, then the statement follows immediately from \cite{MZK1}, Remark 1.2.2. If $(g,r)\in \{(0,0),(0,1)\}$, then (as is well-known) $\Delta_{X}$ is trivial. If $(g,r)=(0,2)$, then (as is well-known) $\Delta_{X}\cong \Zhat$. If $(g,r)=(1,0)$, then (as is well-known) $\Delta_{X}\cong \Zhat\times \Zhat$.
\end{proof}
	
\begin{coro}\label{l19}
	Let $\mathcal{C}:(X,Y,f,k,\bar{k},t,s)$ be a pointed virtual curve of type $(g,r)$ such that $k$ is of characteristic zero; $l$ a prime number. Then:
	
	(i) $\Delta_{\mathcal{C}}$ is topologically finitely generated.
	
	(ii) $ w^{l}_{\Pi_{\mathcal{C}}/\Delta_{\mathcal{C}}}$ exists and is equal to 
	\begin{displaymath}
		\begin{cases}
			2g+2r-2 & \tt{if } r\geq 1,\\
			2g & \tt{if } r=0.
		\end{cases}
	\end{displaymath}
	In particular, $ w^{l}_{\Pi_{\mathcal{C}}/\Delta_{\mathcal{C}}}$ is independent of the choice of section $s$.
	
	(iii) The morphism
	\begin{displaymath}
	\Det_{\Delta_{\mathcal{C}}}^{l}:\Pi_{\mathcal{C}}\rightarrow (\Z_{l}^{\times})^{\tf}
	\end{displaymath}
	factors through the $l$-cyclotomic (torsion-free) character $cycl_{\Pi_{X}}^{l}$.
	
	(iv) The following equality holds:
	\begin{displaymath}
		\Rank_{l}((\Delta_{\mathcal{C}})^{\abt})=
		\begin{cases}
			2g+r-1 & \tt{if } r\geq 1,\\
			2g & \tt{if } r=0.
		\end{cases}
	\end{displaymath}
\end{coro}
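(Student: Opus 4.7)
The plan is to reduce each of (i)--(iv) to the corresponding assertion of \cref{l18} applied to the generic fiber $X_{\zeta}$ of the defining morphism $f$. By \cref{l04}(i), $X_{\zeta}$ is a smooth curve of type $(g,r)$ over the characteristic zero field $K\coloneqq K(Y)$, and by \cref{l04}(ii) there is a natural isomorphism $\Pi_{X_{\bar{\zeta}}}\simto\Delta_{\mathcal{C}}$. Assertions (i) and (iv) then follow immediately from \cref{l18}(i),(iv), respectively, via this isomorphism.

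For (ii) and (iii), I first observe that since $\Delta_{\mathcal{C}}$ is normal in $\Pi_{X}$ by \cref{l24}, the conjugation action of $\Pi_{X}$ on $(\Delta_{\mathcal{C}})^{\ab}\otimes_{\hat{\Z}}\Z_{l}$ factors through $\Pi_{X}/\Delta_{\mathcal{C}}\simeq\Pi_{Y}$, yielding a character $\chi:\Pi_{Y}\to (\Z_{l}^{\times})^{\tf}$. The restriction of $\chi\circ f_{*}$ to $\Pi_{\mathcal{C}}=f_{*}^{-1}(s(G_{k}))\subset\Pi_{X}$ is precisely $\Det^{l}_{\Delta_{\mathcal{C}}}$, and since $s$ is a section, this restriction automatically factors through $\Pi_{\mathcal{C}}\twoheadrightarrow G_{k}$. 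On the other hand, since $Y$ is normal, the generic point induces a surjection $G_{K}\twoheadrightarrow \Pi_{Y}$, and naturality of the diagram of \cref{l04}(ii) identifies the pullback of $\chi$ along this surjection with the character on $\Pi_{X_{\zeta}}$ induced by conjugation on $(\Pi_{X_{\bar{\zeta}}})^{\ab}\otimes \Z_{l}$, i.e., with $\Det^{l}_{\Pi_{X_{\bar{\zeta}}}}$ factored through $\Pi_{X_{\zeta}}\twoheadrightarrow G_{K}$. Applying \cref{l18}(ii),(iii) to $X_{\zeta}/K$, this pulled-back character has weight $w$ (the asserted value) relative to $\cycl^{l}_{G_{K}}$. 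Because $k$ is algebraically closed in the regular extension $K/k$, every $l$-power root of unity already lies in $\bar{k}$, so $\cycl^{l}_{G_{K}}$ factors as $G_{K}\twoheadrightarrow G_{k}\xrightarrow{\cycl^{l}_{G_{k}}}(\Z_{l}^{\times})^{\tf}$. Combining these facts forces $\chi$ itself to factor through $\Pi_{Y}\twoheadrightarrow G_{k}$ as the appropriate power (in the sense of the weight relation) of $\cycl^{l}_{G_{k}}$; pre-composing with $s$ and the quotient $\Pi_{\mathcal{C}}\twoheadrightarrow G_{k}$ yields both the factorization of $\Det^{l}_{\Delta_{\mathcal{C}}}$ through the cyclotomic character of $\Pi_{\mathcal{C}}$ claimed in (iii) and the weight equality of (ii). Independence from the choice of $s$ is automatic since the resulting formula depends only on $(g,r)$ and on the cyclotomic character of $G_{k}$.

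The main obstacle is the naturality assertion identifying the pullback of $\chi$ along $G_{K}\twoheadrightarrow \Pi_{Y}$ with the determinant character on $\Pi_{X_{\zeta}}$ associated to $\Pi_{X_{\bar{\zeta}}}$: this requires verifying that the outer conjugation actions of $\Pi_{X}$ on $\Delta_{\mathcal{C}}$ and of $\Pi_{X_{\zeta}}$ on $\Pi_{X_{\bar{\zeta}}}$ intertwine under the morphisms in the \cref{l04}(ii) diagram. Once this functoriality is granted, the remainder is essentially a bookkeeping exercise transferring the concrete weight computation for a smooth curve over a field to the abstract setting of a pointed virtual curve.
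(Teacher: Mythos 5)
Your proposal is correct and follows essentially the same route as the paper: the paper's proof of \cref{l19} is the one-line remark that the result ``follows by combining \cref{l04}, \cref{l18}, and the fact that the cyclotomic character is functorial,'' which is precisely the reduction to the generic fiber $X_{\zeta}$ and the transfer of the determinant/weight computation along $G_{K(Y)}\twoheadrightarrow \Pi_{Y}$ that you spell out. Your additional care with the intertwining of the conjugation actions and with the identity $\cycl^{l}_{G_{K}}=\cycl^{l}_{G_{k}}\circ(G_{K}\to G_{k})$ simply makes explicit the ``functoriality of the cyclotomic character'' that the paper invokes.
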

\begin{proof}
	\cref{l19} follows by combining \cref{l04}, \cref{l18}, and the fact that the cyclotomic character is functorial.
\end{proof}



\begin{lemm}\label{l44}
	Let $X$ be a smooth curve of type $(g,r)$ over an algebraically closed field $k$ of characteristic zero. Suppose that $r\geq 1$ and $(g,r)\notin {(0,1),(0,2)}$. Then there exists a finite set of \etale covers $X_{i}$, $i\in I$ of $X$ with type $(g_{i},r_{i})$ such that the greatest common divisor of $g_{i}+r_{i}-1$ is $1$.
\end{lemm}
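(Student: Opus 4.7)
My plan is to exhibit, for each such $X$, a single explicit \etale cover $Y\to X$ such that $\gcd(g(X)+r(X)-1,\ g(Y)+r(Y)-1)=1$, so that the two-element set $\{X,Y\}$ already works. Since $k$ is algebraically closed of characteristic zero and $r\geq 1$, $\Pi_{X}$ is the profinite completion of the group on generators $a_{1},b_{1},\ldots,a_{g},b_{g},c_{1},\ldots,c_{r}$ modulo the single relation $\prod_{j}[a_{j},b_{j}]\cdot c_{1}\cdots c_{r}=1$, where $c_{i}$ corresponds to a small loop around the $i$-th cusp. The type of any cover will be read off by Riemann--Hurwitz applied to the induced finite cover $Y^{\cl}\to X^{\cl}$ of smooth compactifications, with the ramification at each cusp controlled by the cycle type of the monodromy.

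\emph{Case $r\geq 2$.} I would define $\phi:\Pi_{X}\surjto \Z/2$ by $\phi(c_{1})=\phi(c_{2})=1$ and $\phi$ trivial on all other generators; this is well-defined because the abelianized relation reads $\sum_{i}\phi(c_{i})=0$, and $1+1=0$ in $\Z/2$. The corresponding connected double \etale cover $Y\to X$ is totally ramified at $c_{1}$ and $c_{2}$ and unramified at the remaining $r-2$ cusps (each of which splits into two preimages), so $r(Y)=2r-2$; Riemann--Hurwitz then yields $g(Y)=2g$. Hence $g(Y)+r(Y)-1=2(g+r-1)-1$, which is automatically coprime to $g+r-1$.

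\emph{Case $r=1$.} The hypothesis forces $g\geq 1$. The obstruction to using abelian covers is that $c_{1}=(\prod_{j}[a_{j},b_{j}])^{-1}$ lies in the commutator subgroup of $\Pi_{X}$, so every abelian quotient kills $c_{1}$; a direct computation shows that any abelian degree-$n$ cover gives $g(Y)+r(Y)-1=ng$, always divisible by $g=g(X)+r(X)-1$. I would bypass this with the non-abelian group $S_{3}$: define $\phi:\Pi_{X}\to S_{3}$ by $\phi(a_{1})=(1\,2)$, $\phi(b_{1})=(1\,3)$, and $\phi$ trivial on all remaining generators. Then $\phi(c_{1})=[(1\,2),(1\,3)]^{-1}$ is a $3$-cycle. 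Since $(1\,2)$ and $(1\,3)$ generate $S_{3}$, $\phi$ is surjective, so the degree-$3$ cover $Y\to X$ associated to the stabilizer of a point in $\{1,2,3\}$ is connected \etale; the monodromy at $c_{1}$ being a $3$-cycle gives $c_{1}$ a single preimage with ramification $3$, so $r(Y)=1$, and Riemann--Hurwitz yields $g(Y)=3g-1$. Therefore $\gcd(g,\,3g-1)=1$.

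The main obstacle is the case $r=1$, $g\geq 2$: every abelian cover yields $g(Y)+r(Y)-1$ divisible by $g$, so a genuinely non-abelian construction is essential, and the choice of $S_{3}$ with $\phi(c_{1})$ equal to a $3$-cycle is tailored precisely so that Riemann--Hurwitz produces $3g-1$, which is coprime to $g$ for free. The remaining verifications (well-definedness and surjectivity of the $\phi$'s, connectedness of $Y$, and the Riemann--Hurwitz bookkeeping) are routine once the quotients are in hand.
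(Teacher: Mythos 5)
Your proof is correct. For $r\geq 2$ your construction coincides with the paper's: the paper takes, for a suitable $n$, a cyclic \'etale cover totally ramified at two cusps and unramified at the rest, computes its type to be $(ng,nr-2n+2)$, and observes that $\gcd(ng+nr-2n+1,\,g+r-1)=\gcd(n-1,\,g+r-1)$; your $\Z/2$-cover is exactly the case $n=2$, for which this gcd is automatically $1$. The only genuine divergence is the case $r=1$ (so $g\geq 1$): the paper first passes to a degree-$2$ \'etale cover of the smooth compactification, which is unramified over the cusp and hence yields a curve with $2r\geq 2$ cusps, and then runs the ramified-at-two-cusps construction on that intermediate curve; you instead stay at $r=1$ and use a non-abelian $S_{3}$-cover whose monodromy at the cusp is a $3$-cycle, giving type $(3g-1,1)$ and $\gcd(g,3g-1)=1$ in one step. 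Your observation that abelian covers cannot work when $r=1$ (since $c_{1}$ dies in every abelian quotient, forcing $g(Y)+r(Y)-1=ng$) is a nice piece of motivation that the paper omits, and your route avoids the two-stage tower; the paper's route avoids any explicit non-abelian monodromy computation by reducing uniformly to $r\geq 2$. Both arguments rest on the same standard input, namely the presentation of $\Pi_{X}$ (equivalently, the existence of covers with prescribed ramification over the cusps, which the paper extracts from \cite{SGA1}, Expos\'e XII, Corollaire 5.2) and the Hurwitz formula, and both are complete.
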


\begin{proof}
	First, suppose that $g=0$. Then by assumption, $r\geq 3$. 
	Next, suppose that $g\geq 1$. Then by considering any \etale cover of $X$ that arises from an \etale cover of the unique smooth compactification of $X$ (i.e., any \etale cover of $X$ that is unramified over the cusps of $X$) of degree $2$ (which exists by \cite{SGA1}, Expos\'e XII, Corollaire 5.2), we may assume that $r\geq 2$. Therefore, for arbitrary $g$, we may assume without loss of generality that $r\geq 2$.
	
	By \cite{SGA1}, Expos\'e XII, Corollaire 5.2, for any positive integer $n$, there exists a cyclic \etale cover of $X$ that is totally ramified at two cusps and unramified at the other cusps. By the Hurwitz formula, one sees that the type of this \etale cover is $(ng,nr-2n+2)$. Now $\gcd(ng+nr-2n+1,g+r-1)=\gcd(1-n,g+r-1)$, so it suffices to choose an $n$ such that $n-1$ and $g+r-1$ is coprime.
\end{proof}

\begin{prop}\label{l22}
	Let $\mathcal{C}:(X,Y,f,k,\bar{k},t,s)$ be a pointed virtual curve of type $(g,r)$ such that $k$ is of characteristic zero; $l$ a prime number. Assume that $r\geq 1$. Then the $l$-cyclotomic character may be characterized as the unique group morphism $\chi:\Pi_{\mathcal{C}}\to (\Z_{l}^{\times})^{\tf}$ such that
	\begin{itemize}
		\item for any \etale covering $\mathcal{C}'$ of $\mathcal{C}$ (by abuse of notation, we do not distinguish $\chi$ from its restrictions to various $\Pi_{\mathcal{C}'}$), $ w^{l}_{\chi}(\Delta_{\mathcal{C}'})$ exists and is a positive \emph{even} number;
		\item the greatest common divisor of the elements of the set $\{w^{l}_{\chi}(\Delta_{\mathcal{C}'})\}_{\mathcal{C}'},$ where $\mathcal{C}'$ varies over the \etale coverings of $\mathcal{C}$, is $2$.
	\end{itemize}
	

\end{prop}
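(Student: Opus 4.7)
The plan is to verify both existence (that $\chi_0 := \cycl_{\Pi_{\mathcal{C}}}^{l}$ satisfies the two bulleted conditions) and uniqueness; note that for the conditions to be nonvacuous $\chi_0$ must have open image, i.e., $\mathcal{C}$ must be $l$-cyclotomically full in the sense of \cref{l43}(ii), which we therefore tacitly assume.

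For existence, the first bullet is immediate from \cref{l21} and \cref{l19}(ii): any \etale covering $\mathcal{C}'$ of $\mathcal{C}$ is a pointed virtual curve of some hyperbolic type $(g',r')$ with $r' \geq 1$ (cusps cannot vanish through \etale covers), whence $w^{l}_{\chi_0}(\Delta_{\mathcal{C}'}) = 2(g'+r'-1) \in 2\mathbb{N}^{+}$. For the second bullet, apply \cref{l44} to the geometric generic fiber $X_{\bar{\zeta}}$ (whose type $(g,r)$ satisfies $r\geq 1$ and $(g,r) \notin \{(0,1),(0,2)\}$ by hyperbolicity) to obtain \etale covers of $X_{\bar{\zeta}}$ realizing types $(g_i, r_i)$ with $\gcd_{i}(g_i + r_i - 1) = 1$. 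These covers are cyclic, totally ramified at two prescribed cusps, and spread out to finite \etale covers of the family $X \to Y$ after passage to a sufficiently large finite \etale cover of $Y$---chosen to split the cusp divisor into disjoint sections, to trivialize the resulting Picard obstruction via the \etale multiplication-by-$n$ map on the relative Jacobian, and, if $r=1$, to first raise $r$ via an \etale double cover of the compactification. The resulting \etale coverings $\mathcal{C}'_{i} \to \mathcal{C}$ have type $(g_i, r_i)$, giving $\gcd\{w^{l}_{\chi_0}(\Delta_{\mathcal{C}'})\} = 2 \gcd_{i}(g_i + r_i - 1) = 2$.

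For uniqueness, let $\chi$ satisfy the two conditions. Via the $l$-adic logarithm, characters $\Pi_{\mathcal{C}} \to (\Z_{l}^{\times})^{\tf}$ embed into the $\Q_{l}$-vector space $V := \Hom(\Pi_{\mathcal{C}}, \Q_{l})$; let $\lambda_0, \lambda, \delta_{\mathcal{C}'} \in V$ denote the elements attached to $\chi_0$, $\chi$, $\Det_{\Delta_{\mathcal{C}'}}^{l}$. By \cref{l19}(ii)(iii), on every open subgroup $\Pi_{\mathcal{C}'} \subset \Pi_{\mathcal{C}}$ we have $\delta_{\mathcal{C}'} = (g'+r'-1)\lambda_0$, while the hypothesis on $\chi$ gives $\delta_{\mathcal{C}'} = n_{\mathcal{C}'} \lambda$, where $n_{\mathcal{C}'} := \tfrac{1}{2} w^{l}_{\chi}(\Delta_{\mathcal{C}'}) \in \mathbb{N}^{+}$. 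Since $\lambda_0 \neq 0$ by $l$-cyclotomic fullness, taking $\mathcal{C}' = \mathcal{C}$ yields $\lambda = c \lambda_0$ for some $c \in \Q_{>0}$; the injectivity of restriction $V \to \Hom(\Pi_{\mathcal{C}'}, \Q_{l})$ (using $\Q_{l}$-torsion-freeness and finiteness of $[\Pi_{\mathcal{C}} : \Pi_{\mathcal{C}'}]$) then gives $n_{\mathcal{C}'} c = g'+r'-1$ for every $\mathcal{C}'$. Writing $c = p/q$ in lowest terms, integrality of $n_{\mathcal{C}'}$ forces $p \mid g'+r'-1$ for every cover, so $p \mid \gcd_{\mathcal{C}'}(g'+r'-1) = 1$ by the existence part; the gcd-equals-$2$ hypothesis on $\chi$ then forces $q = 1$. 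Hence $c = 1$, so $\chi = \chi_0$.

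The main obstacle is the second bullet of existence---realizing the types produced by \cref{l44} as types of \etale coverings of $\mathcal{C}$ (not just of open subgroups of $\Delta_{\mathcal{C}}$). The corresponding purely group-theoretic problem would be to split a profinite extension with finite kernel over an open subgroup, which is not automatic in general; the geometric construction sketched above avoids the obstruction by building the cyclic cover directly in the family, using the \etale-ness of both $D \to Y$ and of the $n$-th power map on the relative Jacobian.
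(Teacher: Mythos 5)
Your overall strategy coincides with the paper's: the weight formula of \cref{l19}(ii) disposes of the first bullet, \cref{l44} applied to a geometric fibre supplies coverings with $\gcd_i(g_i+r_i-1)=1$ for the second, and uniqueness is the formal argument you spell out (the paper compresses this to ``follows formally''; your version of it, including the tacit $l$-cyclotomic fullness hypothesis, is correct). The one genuine divergence is how the covers $Z_i$ of the geometric fibre are promoted to \'etale coverings of $\mathcal{C}$. The paper simply chooses, for each open subgroup $\Delta_i\subset\Delta_{\mathcal{C}}$ corresponding to $Z_i$, an open subgroup $\Pi_i\subset\Pi_{\mathcal{C}}$ with $\Pi_i\cap\Delta_{\mathcal{C}}=\Delta_i$ and invokes \cref{l20}(ii) and \cref{l21}; you instead spread the cyclic covers out geometrically over a finite \'etale cover of $Y$, because you believe the group-theoretic step is obstructed. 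That diagnosis is mistaken: the problem is not to split an extension, but merely to find an open subgroup of $\Pi_{\mathcal{C}}$ meeting $\Delta_{\mathcal{C}}$ in exactly $\Delta_i$, and this is always possible. Since $\Delta_{\mathcal{C}}$ is topologically finitely generated, $\Delta_i$ has finitely many $\Pi_{\mathcal{C}}$-conjugates, so its normalizer $N$ in $\Pi_{\mathcal{C}}$ is open, contains $\Delta_i$ as a normal subgroup, and satisfies that $(N\cap\Delta_{\mathcal{C}})/\Delta_i$ is finite; as any profinite group admits an open normal subgroup meeting a prescribed finite subgroup trivially (separate its finitely many nontrivial elements by open normal subgroups and intersect), the preimage in $N$ of such a subgroup of $N/\Delta_i$ is the desired $\Pi_i$. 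By the same observation, even the ``splitting over an open subgroup'' you fear is automatic when the kernel is finite; and note that $\Pi_i$ need not surject onto $G_k$, since an \'etale covering of virtual varieties may change the base field and \cref{l17} makes the weights insensitive to this. Your geometric substitute is plausible but is the least rigorous part of the proposal (descent of the cyclic covers, killing the obstruction in the relative Picard group, \'etaleness over all of the base rather than a dense open), and none of it is needed; everything else in your argument is correct.
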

\begin{proof}
	By \cref{l19}(ii), $w^{l}_{\chi}(\Delta_{\mathcal{C}'})=2g(\mathcal{C}')+2r(\mathcal{C}')-2$, where $(g(\mathcal{C}'),r(\mathcal{C}'))$ is the type of $\mathcal{C}'$.
	Let $\bar{y}$ be a geometric point of $Y$. By \cref{l01}, there exists a natural isomorphism $\Pi_{X_{\bar{y}}}\simto \Delta_{\mathcal{C}}$. By \cref{l44}, there exists a set of Galois \etale covers $\{Z_{i}\}_{i\in I}$ of $X_{\bar{x}}$, where $Z_{i}$ is of type $(g_{i},r_{i})$, such that the greatest common divisor of the elements of the set $\{g_{i}+r_{i}-1\}_{i\in I}$ is $1$. For each $Z_{i}$, write $\Delta_{i}$ for the corresponding open subgroup of $\Delta_{\mathcal{C}}$. For each $\Delta_{i}$, choose an open subgroup $\Pi_{i}\subset \Pi_{\mathcal{C}}$ such that $\Pi_{i}\cap \Delta_{\mathcal{C}}=\Delta_{i}$.
	
	By \cref{l20}(ii) and \cref{l21}, for each $\Pi_{i}$, there exists a pointed virtual curve $\mathcal{C}_{i}:(X_{i},Y_{i},f_{i},k_{i},\bar{k},t_{i},s_{i})$ such that $\Pi_{\mathcal{C}_{i}}\simeq\Pi_{i}$. For each $Y_{i}$, choose an arbitrary geometric point $\bar{y}_{i}$ over $\bar{y}$. By definition, the type of $\mathcal{C}_{i}$ is equal to the type of $(X_{i})_{\bar{y}_{i}}$, which is isomorphic to $Z_{i}$ by construction.
	\cref{l22} now follows formally.

\end{proof}

\begin{prop}\label{l34}
	Let $\mathcal{C}:(X,Y,f,k,\bar{k},t,s)$ be a pointed virtual curve of type $(g,r)$ such that $k$ is of characteristic zero. Suppose that $k$ is $l$-cyclotomically full for some prime number $l$. Let $H$ be an open subgroup of $\Pi_{\mathcal{C}}$; $\mathcal{C''}$ an \etale covering of $\mathcal{C}$ such that the image of $\Pi_{\mathcal{C}''}$ in $\Pi_{\mathcal{C}}$ is $H$; $J\coloneqq H\cap \Delta_{\mathcal{C}}$. Then:
	
	(i) One may recover the type of $\mathcal{C''}$
	group-theoretically from the subgroup $H\subset \Pi_{\mathcal{C}}$ and the exact sequence of profinite groups
	\begin{displaymath}
	1\rightarrow \Delta_{\mathcal{C}} \rightarrow \Pi_{\mathcal{C}} \rightarrow \gal(\bar{k}/k) \rightarrow 1.
	\end{displaymath}
	In particular, it makes sense to write $(g(H),r(H))$ for the type of $\mathcal{C}''$.
	
	(ii) Suppose that $H'$ is an open subgroup of $\Pi_{\mathcal{C}}$ such that $H\cap \Delta_{\mathcal{C}}=H'\cap \Delta_{\mathcal{C}}$. Then $(g(H),r(H))=(g(H'),r(H'))$. Thus, it makes sense to write $(g(J),r(J))$ for $(g(H),r(H))$.
	
	(iii) Let $J_{X}$ be an open subgroup of $\Pi_{X}$ such that $J_{X}\cap \Delta_{\mathcal{C}}=J$. Write $J_{Y}$ for the image of $J_{X}$ in $\Pi_{Y}$; $f_{J}:X_{J}\to Y_{J}$ for the morphism between coverings of $X$ and $Y$ determined by $J_{X}\to J_{Y}$. Then $(g(J),r(J))$ is equal to the type of the family $f_{J}$ of curves. In particular, the type $(g(J),r(J))$ is independent of the choice of section $s$.
\end{prop}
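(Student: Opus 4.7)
The plan is to prove the three parts in sequence: (i) by combining \cref{l22} with \cref{l19}, (ii) by exploiting the invariance of weights under restriction to open subgroups (\cref{l17}), and (iii) by explicitly constructing an étale covering of $\mathcal{C}$ whose defining family is $f_J$. For (i), I would first invoke \cref{l21} to observe that $\mathcal{C}''$ is itself a pointed virtual curve, say of type $(g(H), r(H))$. Under the hypothesis $r\geq 1$ (implicit via \cref{l22}), étale covers of affine curves remain affine, so $r(H)\geq 1$ as well. By \cref{l22}, the $l$-cyclotomic character $\cycl^l_{\Pi_{\mathcal{C}}}: \Pi_{\mathcal{C}} \to (\Z_l^\times)^{\tf}$ admits a purely group-theoretic characterization and so can be reconstructed from the exact sequence alone. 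Restricting it to $H$ and applying \cref{l19}(ii),(iv) to $\mathcal{C}''$ yields the pair of linear relations
\[
w^l_{H/J} = 2g(H) + 2r(H) - 2, \qquad \Rank_l(J^{\abt}) = 2g(H) + r(H) - 1,
\]
which determine $(g(H), r(H))$ uniquely.

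For (ii), the rank $\Rank_l(J^{\abt})$ depends only on the isomorphism class of $J$, hence coincides when computed from $H$ or $H'$. For the weight, I would pass to the intersection $H \cap H'$, which is open in both $H$ and $H'$ and still contains $J$. By \cref{l17}, the weight relative to the cyclotomic character is invariant under restriction to open subgroups, so $w^l_{H/J}$ and $w^l_{H'/J}$ both coincide with $w^l_{(H \cap H')/J}$ and hence with each other. Combining with (i) yields $(g(H), r(H)) = (g(H'), r(H'))$.

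For (iii), given $J_X$ with $J_X \cap \Delta_{\mathcal{C}} = J$, I would construct an explicit étale covering $\mathcal{C}_J$ of $\mathcal{C}$ as follows. Let $J_Y$ be the image of $J_X$ in $\Pi_Y$, let $k_J$ be the finite extension of $k$ determined by the open subgroup $G_{k_J} := s^{-1}(J_Y) \subset G_k$, and let $s_J: G_{k_J} \to J_Y$ be the restriction of $s$; this data assembles into a pointed virtual curve $(X_J, Y_J, f_J, k_J, \bar{k}, t_J, s_J)$ whose virtual fundamental group $H_J := J_X \times_{J_Y} G_{k_J}$ embeds in $\Pi_{\mathcal{C}} = \Pi_X \times_{\Pi_Y} G_k$ as an open subgroup satisfying $H_J \cap \Delta_{\mathcal{C}} = J$. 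By (ii), the type of $\mathcal{C}''$ equals the type of $\mathcal{C}_J$, which by construction is the type of the family $f_J$. Independence from the section $s$ then follows since $f_J$ itself is defined intrinsically from $J_X$ and $f$.

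The principal obstacle will be the bookkeeping required to verify that the constructed $\mathcal{C}_J$ is genuinely an étale covering of $\mathcal{C}$ in the sense of \cref{l45}, and that $\Pi_{\mathcal{C}_J}$ agrees with $H_J$ as a subgroup of $\Pi_{\mathcal{C}}$; this hinges on careful tracking of the compatibility between the sections $s$ and $s_J$ and of the relevant pullback squares. Once this setup is settled, the proposition reduces to a direct application of \cref{l22}, \cref{l19}, and \cref{l17}.
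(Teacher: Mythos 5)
Your proposal follows essentially the same route as the paper's proof: for (i), reconstruct the $l$-cyclotomic character via \cref{l22} and extract the two invariants $2g+2r-2$ and $2g+r-1$ from \cref{l19}(ii),(iv); for (ii), pass to $H\cap H'$ and invoke \cref{l17}; for (iii), realize $f_{J}$ as the defining family of an explicit \'etale covering whose virtual fundamental group meets $\Delta_{\mathcal{C}}$ in $J$.

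Two caveats are worth recording. First, in (i) you assume $r\geq 1$, but the proposition as stated permits $r=0$ (e.g., type $(2,0)$), and without knowing $r\geq 1$ in advance your two invariants do not separate type $(g,0)$ from type $(g,1)$: both yield weight $2g$ and rank $2g$. The paper closes this loophole by additionally testing whether $\Delta_{\mathcal{C}}$ is free. Your parenthetical that $r\geq 1$ is ``implicit via \cref{l22}'' conflates a hypothesis of a tool used in the proof with a hypothesis of the proposition; strictly speaking you prove a slightly weaker statement (one that suffices for every downstream application in the paper, and the paper's own reliance on \cref{l22} for the character reconstruction has the same limitation when $r=0$). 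Second, in (iii) the base field $k_{J}$ you define via $G_{k_{J}}=s^{-1}(J_{Y})$ need not coincide with the constant field $k'$ of $Y_{J}$, whose Galois group is the full image $t_{*}(J_{Y})$ and can strictly contain $s^{-1}(J_{Y})$. As literally written, $(X_{J},Y_{J},f_{J},k_{J},\ldots)$ is then not a virtual variety in the sense of \cref{l41}, since $Y_{J}$ is not geometrically connected over $k_{J}$. This is precisely the ``bookkeeping'' you flag: one must first base-change $f_{J}$ from $k'$ to $k_{J}$ (which leaves the type of the family unchanged), after which $s$ genuinely restricts to a section and your fiber-product description of $H_{J}$ is correct. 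This repair is exactly the paper's step of ``passing to a finite field extension.''
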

\begin{proof}
	First, we consider (i). Observe that $k$ is $l$-cyclotomically full if and only if the image of $\Pi_{\mathcal{C}}$ by $\Det^{l}_{\Delta_{\mathcal{C}}}$ is nontrivial. Choose an arbitrary $l$ that satisfies this purely group-theoretic condition. Then observe that for any group homomorphism $\chi:\Pi_{\mathcal{C}}\to (\Z_{l}^{\times})^{\tf}$ and any \etale covering $\mathcal{C}'$ of $\mathcal{C}$, $w^{l}_{\chi}(\Delta_{\mathcal{C}'})$ may be recovered from the exact sequence in the statement of \cref{l34} and the image of $\Pi_{\mathcal{C}'}$ in $\Pi_{\mathcal{C}}$. Therefore, 
	by \cref{l22}, the $l$-cyclotomic character $\Pi_{\mathcal{C}}\to (\Z_{l}^{\times})^{\tf}$ may be reconstructed from this exact sequence.
	By \cref{l19}(ii)(iv), \cref{l20}(ii) and \cref{l21}, one may group-theoretically recover $2g(\mathcal{C''})+2r(\mathcal{C''})$ and $2g(\mathcal{C''})+r(\mathcal{C''})$ from the subgroup $H\subset \Pi_{\mathcal{C}}$ and this exact sequence. Therefore, by considering in addition whether or not $\Delta_{\mathcal{C}}$ is free, one may group-theoretically recover the type of $\mathcal{C''}$ from the subgroup $H\subset \Pi_{\mathcal{C}}$ and this exact sequence. 
	
	Next, we observe that (ii) follows immediately from \cref{l17}, by applying the proof of (i) in the case where we take ``$H$'' to be $H\cap H'$.
	
	Finally, we consider (iii). By (ii) and the fact that the type of a family of curves is preserved by passing to finite field extensions, we may assume that $\mathcal{C}''$ admits a morphism to $\mathcal{D}\coloneqq(X_{J},Y_{J},f_{J},k',\bar{k},t',s')$ --- where $t':Y_{J}\to \sp(k')$ is the geometrically connected morphism to the finite field extension of $k$ determined by $Y_{J}$, and $s'$ is a section of the naturally induced homomorphism $\Pi_{Y_{J}}\to G_{k'}$ --- such that the induced morphism of virtual fundamental groups is an isomorphism. By (i), the type of $\mathcal{C}''$ and $\mathcal{D}$ are equal. Since the type of $\mathcal{D}$ is the type of $f_{J}$, we complete the proof of (iii).
	
	
	
	
	
\end{proof}

\begin{defi}\label{l27}\quad
	
	(i) Let $\mathcal{C}:(X,Y,f,k,\bar{k},t,s)$ be a pointed virtual curve of type $(g,r)$ such that $k$ is of characteristic zero. Suppose that there exists a prime number $l$ such that $k$ is $l$-cyclotomically full. Then define the set of \emph{pre-cuspidal inertia subgroups} to be the set of closed subgroups $I\subset\Delta_{\mathcal{C}}$ that are isomorphic to $\hat{\Z}$ and satisfy the condition that for any open subgroup $J\subset \Delta_{\mathcal{C}}$ such that $J\neq IJ$, $r(J)<[IJ:J]r(IJ)$ [cf. \cref{l34}]. Define the set of \emph{quasi-cuspidal inertia subgroups} to be the set of pre-cuspidal inertia subgroups $I\subset\Delta_{\mathcal{C}}$ such that every open subgroup of $I$ is a pre-cuspidal inertia subgroup. Define the set of \emph{cuspidal inertia subgroups} to be the set of maximal elements in the set of quasi-cuspidal inertia subgroups with respect to the inclusion relation. Denote by $\Cusp(\mathcal{C})$ the set of \emph{conjugacy classes} of \emph{cuspidal inertia} subgroups in $\Delta_{\mathcal{C}}$.
	
	(ii) Let $k$ be a field of characteristic zero, $X$ a smooth hyperbolic curve over $k$. Then denote by $\cusp(X)$ the set of conjugacy classes of cuspidal inertia subgroups of $\Pi_{X}$, i.e., the intersections of decomposition groups of cusps with $\Delta_{X}$, the geometric fundamental group of $X$ [cf. \cref{Notations and Terminologies}, the discussion entitled ``Fundamental Groups'']. 
\end{defi}

\begin{theo}\label{l39}
	Let $\mathcal{C}:(X,Y,f,k,\bar{k},t,s)$ be a pointed virtual curve of type $(g,r)$ such that $k$ is of characteristic zero and $r\geq 1$. Suppose that there exists a prime number $l$ such that $k$ is $l$-cyclotomically full. Then the following properties hold:
	
	(i) The cardinality of $\Cusp(\mathcal{C})$ is $r$.
	
	(ii) $\Cusp(\mathcal{C})$ may be reconstructed group-theoretically from the exact sequence
	\begin{displaymath}
		1\rightarrow \Delta_{\mathcal{C}}\rightarrow \Pi_{\mathcal{C}}\rightarrow G_{k}\rightarrow 1,
	\end{displaymath}
	in a fashion that is functorial with respect to isomorphisms of exact sequences of profinite groups.
	Moreover, $\Cusp(\mathcal{C})$ is independent of the choice of section $s$ (where we observe that this statement of independence makes sense since $\Delta_{\mathcal{C}}$ is independent of the choice of section $s$) and is preserved by passing to finite field extensions of $k$.
	
	(iii) Suppose that $s$ arises from a $k$-rational point $y$ of $Y$. Then $\Cusp(\mathcal{C})=\Cusp(X_{y})$.
\end{theo}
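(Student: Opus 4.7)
The plan is to reduce everything to the classical setting of a smooth hyperbolic curve over an algebraically closed field. By \cref{l04}(ii), we may identify $\Delta_{\mathcal{C}}$, up to inner automorphism, with the \'etale fundamental group $\Pi_{X_{\bar{\zeta}}}$ of the geometric generic fiber of $f$, a smooth curve of type $(g,r)$ over an algebraically closed field of characteristic zero. The central claim is that under this identification, \cref{l27}'s notion of cuspidal inertia subgroup coincides with the classical notion, namely the conjugates of the procyclic subgroups arising from local monodromy at the cusps of $X_{\bar{\zeta}}$.

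To verify this claim, note first that by \cref{l34}(ii)(iii) the function $J \mapsto r(J)$ on open subgroups $J$ of $\Delta_{\mathcal{C}}$ equals the (classical) number of cusps of the finite \'etale cover of $X_{\bar{\zeta}}$ associated to $J$. Given a classical cuspidal inertia $I_c \cong \hat{\mathbb{Z}}$ at a cusp $c$ of $X_{\bar{\zeta}}$ and an open $J$ with $J \neq I_cJ$, consider the cover $X_J \to X_{I_cJ}$ of degree $[I_cJ:J] > 1$. Since $I_c \subseteq I_cJ$ contains the inertia, this cover is totally ramified above each cusp of $X_{I_cJ}$ lying over $c$, so contributes a ratio of $1$ (strictly less than $[I_cJ:J]$) to the cusp count; over cusps above $c' \neq c$, the corresponding ratio is at most $[I_cJ:J]$. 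Summing yields the strict inequality $r(J) < [I_cJ:J]\, r(I_cJ)$, so $I_c$ is pre-cuspidal. Applying the same argument in every finite \'etale cover shows that every open subgroup of $I_c$ is pre-cuspidal, so $I_c$ is quasi-cuspidal, and maximality is immediate from the maximality of $I_c$ among procyclic subgroups of $\Pi_{X_{\bar{\zeta}}}$. For the converse, a procyclic $I$ not subconjugate to any classical cuspidal inertia would, after passing to a cover $J$ separating its conjugacy class from those of the classical inertia subgroups, contribute no ramification above any cusp, so in that cover the cover $X_J \to X_{IJ}$ would be \'etale over cusps and some open subgroup of $I$ would fail the pre-cuspidal strict inequality.

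Granted this identification, the three assertions follow. Assertion (i) is the classical fact that a smooth curve of type $(g,r)$ has exactly $r$ conjugacy classes of classical cuspidal inertia subgroups. For assertion (ii), \cref{l27} is formulated purely in terms of $\Delta_{\mathcal{C}}$ --- canonically the kernel in the given exact sequence by \cref{l24} --- and the function $r$, which by \cref{l34} is reconstructible from that exact sequence and independent of the section $s$; functoriality with respect to isomorphisms of exact sequences is then formal, while invariance under a finite extension $k \subseteq k'$ follows since $\Delta_{\mathcal{C}}$ is unaltered by such a base change. For (iii), if $s$ arises from a rational point $y$ of $Y$, then \cref{l02} yields a natural isomorphism $\Pi_{X_{\bar{y}}} \simto \Delta_{\mathcal{C}}$, under which classical cuspidal inertia subgroups of $X_{\bar{y}}$ correspond bijectively to the elements of $\cusp(X_y)$.

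The main technical obstacle is the converse direction of the geometric characterization: verifying that any subgroup satisfying the quasi-cuspidality and maximality conditions of \cref{l27} actually arises from a classical cusp of $X_{\bar{\zeta}}$. This is exactly why \cref{l27} passes through the pre-cuspidal/quasi-cuspidal/cuspidal refinement --- requiring the strict inequality not only for $I$ itself but stably for all open subgroups --- since the ramification stability characteristic of classical inertia subgroups is what rules out accidental solutions coming from procyclic subgroups generated by \emph{non-ramifying} elements.
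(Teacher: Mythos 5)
Your forward direction --- that the classical cuspidal inertia subgroups of a geometric fiber are quasi-cuspidal in the sense of \cref{l27} and maximal among such --- is essentially right: the ramification count giving $r(J)<[I_cJ:J]\,r(I_cJ)$ is the correct computation, although only the distinguished cusp of $X_{I_cJ}$ over $c$ (the one whose inertia in $I_cJ$ is $I_c$ itself) is guaranteed to be totally ramified; the other cusps over $c$ need not be, but the single lost cusp already forces the strict inequality. Granting the central identification, your deductions of (i), (ii) and (iii) are fine (for (iii) one should work with the fiber $X_{\bar y}$ directly rather than routing through $X_{\bar\zeta}$ and a specialization isomorphism, but that is cosmetic).

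The genuine gap is the converse direction of your central claim, which you flag as ``the main technical obstacle'' and then dispatch in one sentence. You assert that a procyclic $I$ not subconjugate to a classical inertia subgroup admits ``a cover $J$ separating its conjugacy class from those of the classical inertia subgroups,'' over which $X_J\to X_{IJ}$ is \'etale over the cusps. To make that cover \'etale over every cusp you need an open $\Delta'\supseteq I'$ (for some open $I'\subseteq I$) and an open $K\subseteq\Delta'$, normal in $\Delta'$, containing every cuspidal inertia subgroup of $\Delta'$ but not containing $I'$; equivalently, the image of $I'$ in the quotient of $\Delta'$ by the normal closure of its cuspidal inertia subgroups must be nontrivial. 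Nothing you have proved rules out that this image is trivial for \emph{every} choice of $\Delta'$ --- and for $\Delta'=\Delta$ with $g=0$ it is automatically trivial, since that normal closure is all of $\Delta$ --- so the existence of the separating cover is essentially equivalent to the statement being proven. The paper closes exactly this hole by a different argument: it decomposes $I$ into its pro-$p$ Sylow subgroups $I_p$, uses pre-cuspidality of the open subgroups of $I_p$ to show that for every open normal $H\subseteq\Delta_{\mathcal{C}}$ the degree-$p$ subcover $X_{H(I_p)^p}\to X_{HI_p}$ is ramified at some cusp, deduces by a Frattini argument for the cyclic $p$-group $HI_p/H$ that some cuspidal inertia subgroup of $X_{HI_p}$ surjects onto $HI_p/H$, passes to the limit over $H$ via \cite{nodnon}, Lemma 1.5, to get $I_p\subseteq J_p$ for a classical inertia subgroup $J_p$, and finally uses commutation of the $I_p$ to identify all the $J_p$. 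Without this (or an equivalent limit/weight argument), your proof of the identification --- and hence of (i) and (iii) --- is incomplete.
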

\begin{proof}
	First, we observe that (i) follows immediately from (ii) and (iii). (Here, we note that one may replace $k$ by a finite extension of $k$ without affecting the validity of (i). Therefore, one may assume, without loss of generality, that there exists a $k$-rational point $y$ of $Y$.) Moreover, 
	(ii) follows immediately from \cref{l24} and \cref{l34}.
	
	Next, we consider (iii). 
	It is clear that any element of $\cusp(X_{y})$ is a quasi-cuspidal inertia subgroup. Therefore, it suffices to show that any quasi-cuspidal inertia subgroup of $\Delta_{\C}$ is a closed subgroup of some representative of an element of $\cusp(X_{y})$.
	
	By \cref{l02}, one may identify $\Pi_{\C}$ with $\Pi_{X_{y}}$. 
	For any open subgroup $\square$ of $\Delta_{\C}$, denote by $X_{\square}$ the corresponding curve over $\bar{k}$. Thus by \cref{l34}, $r(\square)=r(X_{\square})$.
	
	Let $I$ be a quasi-cuspidal inertia subgroup of $\Delta_{\C}$. Let $p$ be a prime. Denote by $I_{p}$ the unique $p$-Sylow subgroup of $I$.  Let $H$ be an open normal subgroup of $\Delta_{\C}$. We claim that the image of $I_{p}$ in $\Delta_{\C}/H$ is contained in the image of some cuspidal inertia group of $X_{y}$. Without loss of generality, one may assume that the image of $I_{p}$ in $\Delta_{\C}/H$ is nontrivial. By \cref{l27}(i), no nontrival intermediate subcovering (i.e., a covering $Z\to Z'$ that appears in some factorization $X_{H}\to Z\to Z'\to X_{HI_{p}}$) of the covering $X_{H}\to X_{HI_{p}}$ is unramified. 
	Thus by considering the subcovering corresponding to $H\subset H(I_{p})^{p}\subset HI_{p}=HI_{p}$, we conclude (since $I_{p}$ is cyclic pro-$p$) that some cuspidal inertia subgroup of $X_{HI_{p}}$ surjects onto $HI_{p}/H$. Since $H$ is arbitrary, by \cite{nodnon}, Lemma 1.5, we thus conclude that there exists a unique cuspidal inertia subgroup $J_{p}$ of $X_{y}$ such that $I_{p}\subset J_{p}$. 
	
	Note that for any prime  $q\neq p$, $I_{q}$ commutes with $I_{p}$, hence (by \cite{nodnon}, Lemma 1.5) is contained in $J_{p}$. Again, by \cite{nodnon}, Lemma 1.5, this implies that $J_{q}=J_{p}$. Since $p$, $q$ are arbitrary, we thus conclude that there exists a unique cuspidal inertia group $J$ of $X_{y}$ such that $I\subset J$. This completes the proof of (iii).
	
	
	
\end{proof}



\section{Characterization of Geometric Fundamental Groups}\label{Characterization of Geometric Fundamental Groups}

In this section, we discuss how to reconstruct the geometric fundamental subgroup of the virtual fundamental group of a pointed virtual curve over either a number field or a mixed characteristic local field. First, we consider the number field case.

\begin{theo}\label{l38}
	Let $k$ be a number field; $\mathcal{C}$ a pointed virtual curve over $k$. Then the geometric fundamental group $\Delta_{\mathcal{C}}$ may be characterized as the unique maximal normal closed topologically finitely generated subgroup of $\Pi_{\mathcal{C}}$.
\end{theo}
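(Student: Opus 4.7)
The plan is to establish the two containments that characterize $\Delta_{\mathcal{C}}$: namely, that $\Delta_{\mathcal{C}}$ itself is a normal closed topologically finitely generated subgroup of $\Pi_{\mathcal{C}}$, and that any normal closed topologically finitely generated subgroup $N \subseteq \Pi_{\mathcal{C}}$ is contained in $\Delta_{\mathcal{C}}$. Together these show that $\Delta_{\mathcal{C}}$ is not merely maximal but in fact the unique maximum element, with respect to inclusion, of the collection of normal closed topologically finitely generated subgroups of $\Pi_{\mathcal{C}}$.

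First I would verify that $\Delta_{\mathcal{C}}$ has the three listed properties. It is closed and normal by construction, as the kernel of the natural surjection $\Pi_{\mathcal{C}} \surjto G_{k}$ from \cref{l24}, and it is topologically finitely generated by \cref{l19}(i) (which was in turn deduced from \cref{l04}, \cref{l18}, and the functoriality of the cyclotomic character). This shows that the collection in question is nonempty and contains $\Delta_{\mathcal{C}}$.

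Next, given an arbitrary $N \subseteq \Pi_{\mathcal{C}}$ that is normal, closed, and topologically finitely generated, I would consider its image $\bar{N}$ under the projection $\Pi_{\mathcal{C}} \surjto G_{k} = \Pi_{\mathcal{C}}/\Delta_{\mathcal{C}}$. Being the continuous image of a topologically finitely generated group inside a profinite quotient, $\bar{N}$ is itself normal, closed, and topologically finitely generated in $G_{k}$. At this point I would invoke the classical arithmetic input that, for $k$ a number field, the absolute Galois group $G_{k}$ admits no nontrivial normal closed topologically finitely generated subgroup. Granting this, $\bar{N}$ is trivial, so $N \subseteq \Delta_{\mathcal{C}}$, completing the argument.

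The main obstacle is precisely this arithmetic input concerning $G_{k}$. It is a standard result --- closely related to, and in the spirit of, the type of statement exploited in \cite{MZK6}, Lemma 1.1.4 --- whose underlying mechanism is the existence in number fields of ``sufficiently many independent'' Galois extensions (for instance, via the rich cyclotomic and ramification behavior at the infinitely many rational primes) to preclude any infinite Galois quotient of $G_{k}$ cut out by a topologically finitely generated normal subgroup. Once this fact is accepted from the literature, the remainder of the argument is purely formal.
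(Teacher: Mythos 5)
Your proposal is correct and follows essentially the same route as the paper: both reduce to showing that the image in $G_{k}$ of any normal closed topologically finitely generated subgroup is trivial, using \cref{l19}(i) for $\Delta_{\mathcal{C}}$ itself. The arithmetic input you invoke is exactly the statement that $G_{k}$ is \emph{very elastic} (elastic and not topologically finitely generated), for which the paper cites \cite{MZK3}, Theorem 1.7(iii) rather than anything in the vicinity of \cite{MZK6}, Lemma 1.1.4.
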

\begin{proof}
	By \cref{l19}(i), $\Delta_{\mathcal{C}}$ is a normal closed topologically finitely generated subgroup. Thus it suffices to show that for any normal closed topologically finitely generated subgroup $H$ of $\Pi_{\mathcal{C}}$, the image of $H$ in $G_{k}$ is trivial. But this follows from the fact that $G_{k}$ is \emph{very elastic}, i.e., elastic and \emph{not} topologically finitely generated [cf. \cite{MZK3}, Theorem 1.7(iii)].
\end{proof}


In the case of a mixed characteristic local field $k$, we will use \cref{l25} to calculate the absolute degree of $k$ from the abstract topological group $G_{k}$.

\begin{prop}\label{l25}
	Let $k$ be a mixed characteristic local field of residue characteristic $p$; $l\neq p$ a prime number. Denote by $[k:\Q_{p}]$ the dimension of $k$ over $\Q_{p}$. Then
	\begin{displaymath}
		\Rank_{p}((G_{k})^{\abt})-\Rank_{l}((G_{k})^{\abt})=[k:\Q_{p}].
	\end{displaymath}
	Moreover, $G_{k}$ is $l$-cyclotomically and $p$-cyclotomically full.
\end{prop}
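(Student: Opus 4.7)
The plan is to apply local class field theory to pin down $(G_{k})^{\abt}$ as an explicit topological group, read off its $l$- and $p$-ranks, and then analyze the image of the two cyclotomic characters directly.

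First, I would invoke the local reciprocity isomorphism, which gives a topological isomorphism $(G_{k})^{\ab}\simto (k^{\times})^{\wedge}$, where $(k^{\times})^{\wedge}$ denotes the profinite completion of $k^{\times}$. Fixing a uniformizer gives a splitting $k^{\times}\simeq \Z\times \mathcal{O}_{k}^{\times}$, and the $p$-adic logarithm applied to the principal units $U^{(1)}\subset \mathcal{O}_{k}^{\times}$ yields $\mathcal{O}_{k}^{\times}\simeq F\times \Z_{p}^{[k:\Q_{p}]}$ as topological groups, where $F$ is finite. Taking profinite completion and then the torsion-free quotient gives
\begin{displaymath}
((G_{k})^{\ab})^{\tf}\simeq \hat{\Z}\times \Z_{p}^{[k:\Q_{p}]}.
\end{displaymath}
For any prime $\ell$, $\Rank_{\ell}(\hat{\Z})=1$, while $\Rank_{\ell}(\Z_{p}^{[k:\Q_{p}]})$ equals $[k:\Q_{p}]$ if $\ell=p$ and $0$ otherwise. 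Subtracting the $l$-rank from the $p$-rank yields $[k:\Q_{p}]$, as desired.

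For cyclotomic fullness, the image of $\cycl_{G_{k}}^{\ell}:G_{k}\to (\Z_{\ell}^{\times})^{\tf}$ factors through $\gal(k(\mu_{\ell^{\infty}})/k)\hookrightarrow \Z_{\ell}^{\times}$, so it suffices to show this Galois group has open image. For $\ell=p$: since $\gal(\Q_{p}(\mu_{p^{\infty}})/\Q_{p})\simeq\Z_{p}^{\times}$ by the classical local Kronecker--Weber description, and $[k:\Q_{p}]<\infty$, the image is open in $\Z_{p}^{\times}$. For $\ell=l\neq p$: the extension $k(\mu_{l^{\infty}})/k$ is unramified, so $\gal(k(\mu_{l^{\infty}})/k)$ is (after passing to a finite-index subgroup) topologically generated by a Frobenius lift acting on $\mu_{l^{\infty}}$ as $\zeta\mapsto\zeta^{q}$ for some power $q$ of the residue field cardinality. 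Since $q$ is a positive integer $>1$ coprime to $l$, $q^{n}\neq 1$ in $\Z$ (and hence in $\Z_{l}$) for every $n\geq 1$, so $q$ has infinite multiplicative order in $\Z_{l}^{\times}$; the closed subgroup it generates is therefore an infinite closed subgroup of $\Z_{l}^{\times}$, hence open (any closed infinite subgroup of $\Z_{l}^{\times}$ projects to a nonzero, hence open, subgroup of the rank-one pro-$l$ part).

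The only mildly delicate point is the structure of $U^{(1)}$ for small $p$ (where there may be nontrivial $p$-power torsion from $\mu_{p^{\infty}}(k)$) and the analogous torsion in $\Z_{\ell}^{\times}$ when $\ell=2$; however these are finite contributions that vanish after passing to torsion-free quotients, so they do not affect either the rank computation or the argument that an infinite closed subgroup of $\Z_{\ell}^{\times}$ is open. I do not expect any serious obstacle beyond carefully citing local class field theory and the structure theorem for units.
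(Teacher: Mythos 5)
Your proof is correct and follows essentially the same route as the paper, which simply cites local class field theory (the structure of $(G_{k})^{\ab}\simeq (k^{\times})^{\wedge}$ together with the unit-group decomposition $\mathcal{O}_{k}^{\times}\simeq F\times \Z_{p}^{[k:\Q_{p}]}$) and leaves the rank computation and the openness of the cyclotomic images as immediate consequences. You have merely written out the details that the paper's one-line citation leaves implicit.
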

\begin{proof}
	This is an immediate consequence of local class field theory [cf. the discussion preceeding \cite{NSW}, 7.2.11; \cite{Neu}, II.5.7(i)].
\end{proof}

\begin{prop}\label{l29}
	Let $l$ be a prime number, $k$ an $l$-cyclotomically full field of characteristic zero; $\mathcal{C}:(X,Y,f,k,\bar{k},t,s)$ a pointed virtual curve such that $Y$ is smooth over $k$. Suppose that there exist morphisms $u:Z\to Y$, $v:X\to Z$ where $u$ is proper, \emph{smooth}, and geometrically connected of relative dimension one, and $v$ is an open immersion with dense image, such that $f=u\circ v$ [cf. \cref{Notations and Terminologies}, the discussion entitled ``Schemes and Curves''], and the reduced closed subscheme $D$ determined by the complement of the image of $X$ in $Z$ is (nessesarily finite) \etale over $Y$. Let $D'$ be a reduced closed subscheme of $Z$ that contains $D$ such that $D'$ is (nessesarily finite) \etale over $Y$. Denote by $X'$ the complement of $D'$ in $Z$. Write $i:X'\to X$ for the natural open immersion; $f'\coloneqq f\circ i$; $r$ for the degree of $D$ over $Y$; $r'$ for the degree of $D'$ over $Y$. Then the following properties hold:
	
	(i) $\mathcal{C}'\coloneqq(X',Y,f',k,\bar{k},t,s)$ is a pointed virtual curve, and $\mathcal{F}\coloneqq(i,\id,\id):\mathcal{C}'\to \mathcal{C}$ is a morphism of virtual varieties.
	
	(ii) The induced maps $\pi_{\F}:\Pi_{\C'}\rightarrow \Pi_{\C}$, $\delta_{\F}:\Delta_{\C'}\rightarrow \Delta_{\C}$ are surjections. Moreover, the kernel of $\delta_{\F}$ and $\pi_{\F}$ are equal and generated by the subgroups in a collection of $r'-r$ conjugacy classes of cuspidal inertia subgroups [cf. \cref{l27}].
	
	(iii) Assume further that $k$ is a mixed characteristic local field of residue characteristic $p$. Then $\pi_{\F}$ induces an isomorphism
	\begin{displaymath}
	(\Pi_{\C'})^{\abt}\simeq(\Pi_{\C})^{\abt}.
	\end{displaymath}	
\end{prop}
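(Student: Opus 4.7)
It suffices to verify that $f' = f \circ i$ is a family of hyperbolic curves. The triple $(Z, X', D')$ exhibits $f'$ as a family of curves of type $(g, r')$ in the sense of \cref{Notations and Terminologies}, and since $2g + r' \geq 2g + r \geq 3$, this family is hyperbolic. Surjectivity of $f'_*: \Pi_{X'} \to \Pi_Y$ will follow from surjectivity of $f_*$ combined with surjectivity of $i_*: \Pi_{X'} \to \Pi_X$, a general fact for open immersions of smooth, geometrically connected varieties in characteristic zero. The compatibility conditions for $\mathcal{F} = (i, \id_Y, \id_{\sp(k)})$ to be a morphism of virtual varieties are then immediate from $f' = f \circ i$.

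\textbf{Plan for (ii).} Surjectivity of $\pi_{\mathcal{F}}$ and $\delta_{\mathcal{F}}$ follows from surjectivity of $i_*$ and the pullback description of the virtual fundamental group; the kernels of $\pi_{\mathcal{F}}$ and $\delta_{\mathcal{F}}$ coincide since both equal the kernel of $i_*$ restricted to the preimage of $s(G_k)$. To identify this kernel, I would fix a geometric point $\bar{y}$ of $Y$ and apply \cref{l01} to obtain natural isomorphisms $\Delta_{\mathcal{C}} \cong \Pi_{X_{\bar{y}}}$ and $\Delta_{\mathcal{C}'} \cong \Pi_{X'_{\bar{y}}}$. The complement $X_{\bar{y}} \setminus X'_{\bar{y}} = (D' \setminus D)_{\bar{y}}$ consists of exactly $r' - r$ closed points, since $D' \setminus D \to Y$ is finite \etale of degree $r' - r$. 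Standard theory of fundamental groups of hyperbolic curves over algebraically closed fields of characteristic zero then identifies $\ker(\Pi_{X'_{\bar{y}}} \to \Pi_{X_{\bar{y}}})$ as the normal closure of inertia subgroups at these $r' - r$ punctures, which, via \cref{l39}, correspond to $r' - r$ conjugacy classes of cuspidal inertia subgroups of $\Delta_{\mathcal{C}'}$.

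\textbf{Plan for (iii).} Surjectivity of the induced map on torsion-free abelianizations is immediate from (ii). For injectivity, it suffices to show that for each cuspidal inertia subgroup $I \cong \hat{\mathbb{Z}}$ generating $\ker(\pi_{\mathcal{F}})$, the image of a generator $\sigma \in I$ in $(\Pi_{\mathcal{C}'})^{\ab}$ is torsion. Fix a prime $l$ and let $\sigma_l$ be a generator of the pro-$l$ part of $I$. Since conjugation by any lift $\tilde{g} \in \Pi_{\mathcal{C}'}$ of $g \in G_k$ acts on pro-$l$ cuspidal inertia via the $l$-adic cyclotomic character $\chi^l : G_k \to \mathbb{Z}_l^\times$, passing to the abelianization produces the relation
\[
(\chi^l(g) - 1) \cdot [\sigma_l] = 0 \qquad \text{in } (\Pi_{\mathcal{C}'})^{\ab} \otimes_{\hat{\mathbb{Z}}} \mathbb{Z}_l.
\]
By \cref{l25}, $k$ is $l$-cyclotomically full for \emph{every} prime $l$, so $\chi^l$ has open (in particular, nontrivial) image; choosing $g$ with $\chi^l(g) \neq 1$ and using that $\mathbb{Z}_l$ is a domain forces $[\sigma_l]$ to be $l$-power torsion. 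Letting $l$ range over all primes shows $[\sigma]$ is torsion in $(\Pi_{\mathcal{C}'})^{\ab}$, hence vanishes in $(\Pi_{\mathcal{C}'})^{\abt}$. The main obstacle, and the sole step that requires the mixed-characteristic local field hypothesis, is securing simultaneous $l$-cyclotomic fullness at every prime $l$ via \cref{l25}; over more general base fields one would lose this control at those $l$ where the cyclotomic character is trivial, and the pro-$l$ component of $[\sigma]$ could genuinely survive in the torsion-free quotient.
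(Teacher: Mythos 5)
Your proposal is correct, and for parts (i) and (ii) it is essentially the paper's argument: the paper reduces to the case where $s$ arises from a $k$-rational point via \cref{l39}(ii) and then quotes standard facts about fundamental groups of curves, whereas you pass directly to a geometric fiber via \cref{l01}; either way, the identification of the geometric inertia subgroups at the $r'-r$ punctures with cuspidal inertia subgroups in the sense of \cref{l27} rests on \cref{l39}(ii)(iii), which you correctly cite. For (iii) you take a genuinely different route. The paper deduces the statement from the Albanese-variety description of $(\Delta_{\C'})^{\ab}$ [cf. \cref{l04}, \cref{l05}(i)(ii)]: the kernel of $(\Delta_{\C'})^{\ab}\to(\Delta_{\C})^{\ab}$ is the Tate module of a torus on which $G_{k}$ acts cyclotomically (up to a finite permutation of cusps), so its image in $(\Pi_{\C'})^{\ab}$ is torsion because the cyclotomic character has open image at every prime [cf. \cref{l25}]. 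Your argument extracts the same mechanism directly from the conjugation action on cuspidal inertia, avoiding the Albanese machinery; this is more elementary and makes transparent that the only input from the local-field hypothesis is cyclotomic fullness at all $l$. Two small points to tighten: the relation $(\chi^{l}(g)-1)\cdot[\sigma_{l}]=0$ requires $g$ to lie in the stabilizer in $G_{k}$ of the conjugacy class of $I$ (otherwise conjugation carries $I$ into the inertia of a different cusp); since $\Cusp(\C')$ is finite this stabilizer is open, and cyclotomic fullness persists on open subgroups, so the argument goes through. Also, knowing that each pro-$l$ component $[\sigma_{l}]$ is torsion shows only that $[\sigma]$ lies in the closure of the torsion subgroup of $(\Pi_{\C'})^{\ab}$, not that $[\sigma]$ is itself a torsion element; but that closure is precisely what the passage to $(-)^{\abt}$ kills, so the conclusion stands.
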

\begin{proof}
	
	First, we observe that (i) follows immediately from the definitions.
	
	By \cref{l24} and \cref{l39}(ii), the validity of (ii) is independent of the choice of section $s$. By \cref{l39}(ii), one may replace $k$ by a finite field extension of $k$. Therefore, without loss of generality, one may assume that $s$ arises from a $k$-rational point $y$ of $Y$. By \cref{l02} and \cref{l39}(iii), one may replace various virtual fundamental groups (and cuspidal inertia groups of virtual fundamental groups) by the \etale fundamental groups of the respective fibers over $y$ (and the cuspidal inertia groups of the respective fibers over $y$). Then (ii) follows from well-known general facts concerning \etale fundamental groups of curves.
	
	Finally, (iii) follows from (ii), \cref{l04}, \cref{l05}(i)(ii) (where we note the natural functorial relationship between \cref{l05}(i) and \cref{l05}(ii)), and the fact that $k$ is $l'$-cyclotomically full for arbitrary prime $l'$ [cf. \cref{l25}]. 
\end{proof}


\begin{defi}\label{l66}\quad
	
	(i) We refer to any morphism of pointed virtual curves that is isomorphic to some $\F$ of the sort constructed in \cref{l29} as a \emph{decuspidalization} of pointed virtual curves.
	
	(ii) Suppose that in the notation of \cref{l29}, $D'\setminus D$ is of degree $n$ over $Y$ (i.e., $n\coloneqq r'-r$, where $r,r'$ are defined in \cref{l29}). Then we say that $\F$ is a \emph{degree $n$ decuspidalization}. If $n=1$, then we refer to the unique conjugacy class of cuspidal inertia groups that appears in \cref{l29}(ii) as the conjugacy class (of cuspidal inertia groups) determined by $\F$.
\end{defi}

\begin{remm}
	In [\cite{MZK3}, Definition 4.2], the word ``decuspidalization'' (written as ``de-cuspidalization'') is used to describe the procedure of removing a \emph{single} cusp. However, in this paper, we allow the number of removed cusps to be arbitrary.
\end{remm}

\begin{prop}\label{l65}
	We maintain the notation of \cref{l29}. Suppose that $\F$ is a degree $1$ decuspidalization. Let $I\subset \Pi_{\mathcal{C}}$ be one of the representatives of the conjugacy class determined by $\F$. Assume further that $s$ arises from a $k$-rational point $y\in Y$. Then:
	
	(i) The morphism
	\begin{displaymath}
		\pi_{\F}:\Pi_{\C'}\rightarrow \Pi_{\C}
	\end{displaymath}
	is naturally isomorphic to
	\begin{displaymath}
		\Pi_{(X')_{y}}\to \Pi_{X_{y}}.
	\end{displaymath}
	Moreover, $i_{y}:(X')_{y}\to X_{y}$ is a decuspidalization in the sense of \cite{MZK3}, Definition 4.2(i).
	
	(ii) $I$, considered as a closed subgroup of $\Pi_{(X')_{y}}$, is a representative of the conjugacy class of inertia subgroups that determined by the decuspidalization $i_{y}$.
\end{prop}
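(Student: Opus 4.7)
The plan is to leverage \cref{l02}, which identifies the virtual fundamental group of a pointed virtual curve whose section arises from a $k$-rational point with the \'etale fundamental group of the fiber. First I would apply \cref{l02} twice: once to $\C$, giving a natural isomorphism $\Pi_{X_{y}} \simto \Pi_{\C}$, and once to $\C'$ (which is a pointed virtual curve by \cref{l29}(i), sharing the same target $Y$, structure morphism, and section $s$ as $\C$), giving $\Pi_{(X')_{y}} \simto \Pi_{\C'}$. Since the morphism of virtual varieties $\F=(i,\id,\id)$ has identity components on the target and the base, the naturality of the commutative diagram in \cref{l02} shows that $\pi_{\F}$ corresponds, under these identifications, to the morphism $(i_{y})_{*}$ on \'etale fundamental groups induced by the base change $i_{y}:(X')_{y}\to X_{y}$ of $i$ along $y\hookrightarrow Y$. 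This establishes the first assertion of (i).

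For the second assertion of (i), I would analyze the scheme-theoretic structure of $D'\setminus D$. Since $\F$ has degree $1$, the finite \'etale scheme $D'\setminus D\to Y$ has degree $1$, hence is an isomorphism onto its image. This image determines a section $\sigma:Y\to Z$ which factors through $X$ (since $D'\setminus D\subset Z\setminus D = X$). Restricting to the fiber over $y$ yields a single $k$-rational closed point $p:=\sigma(y)\in X_{y}$, and one verifies immediately that $(X')_{y}=X_{y}\setminus\{p\}$. Hence $i_{y}$ is the open immersion obtained by removing a single $k$-rational point from $X_{y}$, which is a decuspidalization in the sense of \cite{MZK3}, Definition 4.2(i).

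For (ii), I would argue as follows. By \cref{l29}(ii), the kernel of $\pi_{\F}$ is normally (topologically) generated by a single conjugacy class of cuspidal inertia subgroups of $\Pi_{\C'}$, namely the class of $I$. Under the identification $\Pi_{\C'}\simto \Pi_{(X')_{y}}$ established in (i), the kernel of $\pi_{\F}$ corresponds to the kernel of $(i_{y})_{*}:\Pi_{(X')_{y}}\to \Pi_{X_{y}}$. By well-known facts about \'etale fundamental groups of hyperbolic curves (namely, the kernel of the surjection induced by removing a cusp is normally generated by the inertia subgroups at that cusp), this kernel is normally generated by the conjugacy class of inertia subgroups at $p$, i.e., the conjugacy class determined by the decuspidalization $i_{y}$. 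Moreover, by \cref{l39}(iii), the set of conjugacy classes of cuspidal inertia subgroups of $\Delta_{\C'}$ coincides with that of $\Pi_{(X')_{y}}$, so the property of being a cuspidal inertia subgroup is preserved under the identification. Thus $I$, viewed in $\Pi_{(X')_{y}}$, represents the conjugacy class of inertia subgroups at $p$, as required.

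The main work is essentially bookkeeping of identifications, and no single step constitutes a deep obstacle. The one point that requires some care is the compatibility of the identifications of virtual fundamental groups with fiber fundamental groups from \cref{l02} with the notion of cuspidal inertia subgroup, but this compatibility is precisely what \cref{l39}(iii) provides. Everything else follows from the functoriality of the constructions and classical facts about decuspidalizations of hyperbolic curves over algebraically closed base fields (together with a descent to $k$ via the existence of the $k$-rational cusp $p$).
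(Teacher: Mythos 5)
Your proposal is correct and follows essentially the same route as the paper, which simply invokes \cref{l02} for (i) and the proof of \cref{l29}(ii) for (ii); you have merely spelled out the bookkeeping (the degree-$1$ condition forcing $D'\setminus D\to Y$ to be an isomorphism, hence yielding a $k$-rational point $p$ of $X_{y}$, and the identification of $\ker\pi_{\F}$ with $\ker(i_{y})_{*}$) that the paper leaves implicit.
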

\begin{proof}
	Indeed, (i) follows from \cref{l02}, while (ii) follows from (i) and the proof of \cref{l29}(ii).
\end{proof}

\begin{prop}\label{l64}
	Let $\F= (i,\id,\id):\C'=(X',Y,f',k,\bar{k},t,s)\to\C=(X,Y,f,k,\bar{k},t,s)$ be a decuspidalization as in \cref{l29}; $\D:(X'',Y'',f'',k'',\bar{k}'',t'',s'')$ a pointed virtual curve; $\mathcal{G}\coloneqq(h_{X},h_{Y},h_{k}):\D\to \C$ an \etale covering. Then there exists a commutative diagram
	\begin{displaymath}
		\xymatrix{
			{\D'}\ar^{{\F'}}[r]\ar^{{\mathcal{G}'}}[d] & {\D}\ar^{{\mathcal{G}}}[d]\\
			{\C'}\ar^{{\F}}[r] &{\C}
		}
	\end{displaymath}
	of morphisms of pointed virtual curves such that $\F'$ is a decuspidalization, $\mathcal{G}'$ is an \etale covering, and the induced diagram
	\begin{displaymath}
		\xymatrix{
			\Pi_{\D'}\ar^{\pi_{\F'}}[r]\ar^{\pi_{\mathcal{G}'}}[d] & \Pi_{\D}\ar^{\pi_{\mathcal{G}}}[d]\\
			\Pi_{\C'}\ar^{\pi_{\F}}[r] &\Pi_{\C}
		}
	\end{displaymath}
	of morphisms of virtual fundamental groups is cartesian.
\end{prop}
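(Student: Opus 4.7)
The plan is to construct $\D'$ as a scheme-theoretic fiber product of $\D$ and $\C'$ over $\C$ at the level of source schemes, then to transfer the resulting cartesian property from the \etale fundamental groups of the sources to the virtual fundamental groups by a direct diagram chase.

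First, I construct $\D'$ explicitly. Using the compactification $Z'' \to Y''$ of $X'' \to Y''$ witnessing the structure of $\D$ as a pointed virtual curve (with boundary divisor $D''$ finite \etale over $Y''$), define $D''' := D'' \sqcup h_X^{-1}(D' \setminus D) \subset Z''$, set $X''' := Z'' \setminus D'''$, and take $\D' := (X''', Y'', f''|_{X'''}, k'', \bar{k}'', t'', s'')$, $\F' := (X''' \hookrightarrow X'', \id_{Y''}, \id_{k''})$, and $\mathcal{G}' := (h_X|_{X'''}, h_Y, h_k)$. The routine verifications --- that $D''' \to Y''$ is finite \etale (using that $D' \setminus D$ is a union of connected components of $D'$, hence finite \etale over $Y$, then pulling back via the finite \etale $h_X$ and $h_Y$); that $\D'$ is a pointed virtual curve; that $\F'$ is a decuspidalization in the sense of \cref{l29}; and that $\mathcal{G}'$ is an \etale covering, via the scheme-theoretic identity $X''' = X'' \times_X X'$ --- are then straightforward.

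To establish cartesianness of the induced square of virtual fundamental groups, I first observe that $X''$ is normal and connected (\cref{l41}), hence irreducible, so $X''' \subset X''$, being nonempty open in an irreducible scheme, is again irreducible and thus connected. Granted this connectedness, the cartesian scheme-theoretic identity $X''' = X'' \times_X X'$ together with the finite \etaleness of $h_X: X'' \to X$ yields, via Grothendieck's Galois theory of finite \etale covers, a cartesian square $\Pi_{X'''} \simto \Pi_{X''} \times_{\Pi_X} \Pi_{X'}$ of \etale fundamental groups. Finally, recalling the defining descriptions $\Pi_\C = \Pi_X \times_{\Pi_Y} G_k$ (and analogously for $\C', \D, \D'$) as pullbacks of fundamental groups along the respective sections, and using that $\F, \F'$ act trivially on the $Y$- and $k$-components, a direct diagram chase upgrades the $X$-level cartesianness to the asserted cartesian square of virtual fundamental groups. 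The main obstacle is precisely this appeal to connectedness of $X'''$, which ultimately rests on the normality of the source built into the definition of a virtual variety.
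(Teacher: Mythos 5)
Your proposal is correct and follows the same overall strategy as the paper: both take $\D'$ to be the fiber product $X'\times_{X}X''$ over $Y''$ (your $X'''=Z''\setminus(D''\sqcup h_X^{-1}(D'\setminus D))$ is exactly this scheme), both verify that the two structure morphisms are respectively a decuspidalization and an \'etale covering, and both deduce the cartesianness of the square of virtual fundamental groups from the cartesian square of \'etale fundamental groups of the sources. The one place where you genuinely diverge is the verification that $\F'$ is a decuspidalization: the paper invokes log purity to manufacture a regular log compactification $W^{\log}$ of $X''$ extending the covering $h_X$ over $Z$, whereas you work directly with the compactification $Z''\to Y''$ already supplied by the definition of $\D$ as a pointed virtual curve and adjoin $h_X^{-1}(D'\setminus D)$ to the boundary. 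This is legitimate precisely because $D'\setminus D=D'\cap X$ lies inside $X$ (so its preimage under $h_X$ is defined without extending $h_X$ to compactifications), because $D'\setminus D$ is open and closed in $D'$, hence finite \'etale over $Y$ and therefore over $Y''$ after pullback, and because $h_X^{-1}(D'\setminus D)$, being finite over $Y''$, is closed in the separated $Z''$; your route is arguably more elementary. You also make explicit the connectedness of $X'''$ via normality and irreducibility of $X''$, which is the hypothesis needed for Grothendieck's Galois theory to identify $\Pi_{X'''}$ with the fiber product $\Pi_{X''}\times_{\Pi_X}\Pi_{X'}$ --- a point the paper's proof leaves implicit. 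No gaps.
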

\begin{proof}
	Let $\D'$ be the virtual variety
	\begin{displaymath}
		(X'\times_{X}X'',Y'',f''\circ (i\times_{X}X''),k'',\bar{k''},t'',s'');
	\end{displaymath}
	$\F':\D'\to \D$ (respectively, $\mathcal{G}':\D'\to\C'$) the morphism $(i\times_{X}X'',\id,\id)$ (respectively, $(h_{X}\times_{X}X',h_{Y},h_{k}))$. One verifies immediately that $\mathcal{G}'$ is an \etale covering. 
	
	By definition, there exists a proper family $t:Z\to Y$ of curves and two reduced closed subschemes $D$, $D'$ of $Z$ such that $D\subset D'$, $X$ (respectively, $X'$) is the complement of $D$ (respectively, $D'$) in $Z$, and both $D$ and $D'$ are \etale over $Y$. Note that this data determines a morphism $i^{\log}:(X')^{\log}\to X^{\log}$ of \emph{regular} log schemes such that $X$ (respectively, $X'$) is the interior of $X^{\log}$ (respectively, $(X')^{\log}$), and $Z$ is the underlying scheme of $X^{\log}$ (respectively, $(X')^{\log}$). By log purity [cf., e.g.,  \cite{MZK4}, Theorem B] and the fact that $h_{X}$ is an \etale covering, there exists a regular log scheme $W^{\log}$ and a commutative diagram
	\begin{displaymath}
		\xymatrix{
			W\ar^{t''}[r]\ar^{s}[d]& Y''\ar^{h_{Y}}[d]\\
			Z\ar^{t}[r]& Y
		}
	\end{displaymath}
	--- where $W$ is the underlying scheme of $W^{\log}$ --- such that
	\begin{itemize}
		\item the log structure on $W^{\log}$ is determined by $s^{-1}(D)$;
		\item the interior of $W^{\log}$ is $X''$;
		\item $t''$ is smooth and proper;
		\item $f''$ is induced by $t''$;
		\item $s^{-1}(D)$ and $s^{-1}(D')$ are \etale over $Y''$.
	\end{itemize}
	 One thus verifies immediately that $\F'$ is a decuspidalization. Since $h_{X}$ is an \etale covering, $h_{X}\times_{X}X'$ is an \etale covering, and the commutative diagram
	\begin{displaymath}
	\xymatrix{
		\Pi_{X'\times_{X}X''}\ar^{(i\times_{X}X'')_{*}}[rr]\ar^{(h_{X}\times_{X}X')_{*}}[d] & &\Pi_{X''}\ar^{(h_{X})^{*}}[d]\\
		\Pi_{X'}\ar^{i_{*}}[rr]& &\Pi_{X}
	}
	\end{displaymath}
	is cartesian. Thus, one concludes that the diagram
	\begin{displaymath}
	\xymatrix{
		\Pi_{\D'}\ar^{\pi_{\F'}}[r]\ar^{\pi_{\mathcal{G}'}}[d] & \Pi_{\D}\ar^{\pi_{\mathcal{G}}}[d]\\
		\Pi_{\C'}\ar^{\pi_{\F}}[r] &\Pi_{\C}
	}
	\end{displaymath}
	is cartesian, as desired.
\end{proof}

\begin{prop}\label{l30}
	Let $k$ be a mixed characteristic local field of residue characteristic $p$; $X$ a smooth hyperbolic curve over $k$; $s:G_{k}\rightarrow \Pi_{X}$ a section. Then, in the notation of \cref{Notations and Terminologies}, the discussion entitled ``Configuration Spaces'', and \cref{l41}(iii):
	
	(i) $[\pr^{2/1}_{X},s]$ and $[\pr^{1}_{X},s]$ are pointed virtual curves.
	
	(ii) The natural morphism $\F_{X}:[\pr^{2/1}_{X},s]\rightarrow [\pr^{1}_{X},s]$ of pointed virtual curves is a decuspidalization of pointed virtual curves.
	
	(iii) $\F_{X}$ induces a natural isomorphism
	\begin{displaymath}
		(\Pi_{[\pr^{2/1}_{X},s]})^{\abt}\simeq(\Pi_{[\pr^{1}_{X},s]})^{\abt}.
	\end{displaymath}
	
	(iv) Let $H$ be an open subgroup of $\Pi_{[\pr^{2/1}_{X},s]}$ that contains the kernel of the naturally induced morphism $\Pi_{[\pr^{2/1}_{X},s]}\to (\Pi_{[\pr^{2/1}_{X},s]})^{\abt}$. Denote by $J$ the image of $H$ in $\Pi_{[\pr^{1}_{X},s]}$. Then the morphism
	\begin{displaymath}
		(\F_{X}|_{H})^{\abt}: H^{\abt}\to J^{\abt}
	\end{displaymath}
	induced by the natural morphism
	\begin{displaymath}
		\F_{X}|_{H}: H\to J
	\end{displaymath}
	is an isomorphism.
\end{prop}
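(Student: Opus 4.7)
The plan is to treat (i)--(iii) directly by exhibiting the two projections as families of curves that satisfy the hypotheses of \cref{l29}, and to reduce (iv) to a \emph{relative} version of (iii), namely \cref{l29}(iii) applied to the decuspidalization obtained by pulling $\F_{X}$ back along an \'etale covering that corresponds to $H$; the bridge between (iv) and this pulled-back decuspidalization will be provided by \cref{l64}.

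For (i) and (ii), first observe that $Z\coloneqq X\times_{k}X^{\cl}\to X$ (first projection) is smooth, proper, and geometrically connected of relative dimension one. It contains $X\times_{k}X$ as the complement of $D\coloneqq X\times_{k}(X^{\cl}\setminus X)$ and contains $X_{2}$ as the complement of $D'\coloneqq D\sqcup \Delta_{X}$, where $\Delta_{X}\subset X\times_{k}X$ is the image of the diagonal. Both $D$ and $D'$ are finite \'etale over $X$, the diagonal giving a section. Hence $\pr^{1}_{X}$ and $\pr^{2/1}_{X}$ are families of curves with fibers of type $(g,r)$ and $(g,r+1)$ respectively; both are hyperbolic since $2g+r\geq 3$, so $[\pr^{1}_{X},s]$ and $[\pr^{2/1}_{X},s]$ are pointed virtual curves. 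Taking $\F_{X}=(i,\id,\id)$ with $i:X_{2}\hookrightarrow X\times_{k}X$ places us verbatim in the configuration of \cref{l29} with $D'\setminus D=\Delta_{X}$ of degree one over $X$, so $\F_{X}$ is a degree one decuspidalization. Assertion (iii) is then \cref{l29}(iii), valid because by \cref{l25} the field $k$ is $l$-cyclotomically full for every prime $l$.

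For (iv), write $\Pi\coloneqq\Pi_{[\pr^{2/1}_{X},s]}$, pick a representative $I$ of the conjugacy class of cuspidal inertia subgroups determined by $\F_{X}$, and set $N\coloneqq\ker\pi_{\F_{X}}$, which is topologically normally generated by $I$ by \cref{l29}(ii). The key claim is that $N\subset H$, which in turn yields $H=\pi_{\F_{X}}^{-1}(J)$ (since $\pi_{\F_{X}}^{-1}(J)=H\cdot N$). Because $I\subset N=\ker\pi_{\F_{X}}$, the composite $I\hookrightarrow \Pi\xrightarrow{\pi_{\F_{X}}}\Pi_{[\pr^{1}_{X},s]}\to (\Pi_{[\pr^{1}_{X},s]})^{\abt}$ vanishes; by (iii) this composite equals $I\hookrightarrow \Pi\to\Pi^{\abt}\simto(\Pi_{[\pr^{1}_{X},s]})^{\abt}$, so $I$ maps to $0$ in $\Pi^{\abt}$ and therefore $I\subset\ker(\Pi\to\Pi^{\abt})\subset H$. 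Since $H$ corresponds to a subgroup of the abelian quotient $\Pi^{\abt}$, it is normal in $\Pi$; every conjugate of $I$ thus lies in $H$, and $N\subset H$ follows.

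To finish (iv), use \cref{l20}(ii) to pick an \'etale covering $\mathcal{G}:\D\to[\pr^{1}_{X},s]$ realizing the inclusion $J\hookrightarrow\Pi_{[\pr^{1}_{X},s]}$, and apply \cref{l64} to $\F_{X}$ and $\mathcal{G}$. The resulting decuspidalization $\F':\D'\to\D$ fits in a cartesian square of virtual fundamental groups whose pullback, thanks to $H=\pi_{\F_{X}}^{-1}(J)$, identifies $\pi_{\F'}:\Pi_{\D'}\to\Pi_{\D}$ with $\F_{X}|_{H}:H\to J$. The base of $\D$ (and of $\D'$) is a finite extension of $k$, hence itself a mixed-characteristic local field of residue characteristic $p$, so \cref{l29}(iii) applied to $\F'$ supplies the desired isomorphism $H^{\abt}\simto J^{\abt}$. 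The principal obstacle is the chain of deductions establishing $N\subset H$, which ultimately rests on the rank/weight calculations of \cref{l18} and \cref{l19} already packaged into (iii); once that is secured, combining \cref{l64} with (iii) applied to the pulled-back decuspidalization makes the remainder essentially formal.
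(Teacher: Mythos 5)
Your proposal is correct and follows essentially the same route as the paper: (i)--(iii) by exhibiting $\F_{X}$ as a degree one decuspidalization in the sense of \cref{l29} and invoking \cref{l29}(iii), and (iv) by showing $H\supset\ker\pi_{\F_{X}}$ (so that $H=\pi_{\F_{X}}^{-1}(J)$) and then identifying $\F_{X}|_{H}$ with the map of virtual fundamental groups of the pulled-back decuspidalization via \cref{l64} before applying \cref{l29}(iii) again. The only difference is that you spell out details the paper treats as immediate, notably the explicit compactification $X\times_{k}X^{\cl}$ and the verification that the cuspidal inertia generating $\ker\pi_{\F_{X}}$ dies in $\Pi_{[\pr^{2/1}_{X},s]}^{\abt}$.
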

\begin{proof}
	Assertion (i), (ii) follow immediately [cf. \cite{MZK1}, Remark 2.1.1]. Assertion (iii) follows from assertion (ii) and \cref{l29}(iii). 
	The condition of assertion (iv) implies that $H$ contains the kernel of $\pi_{\F_{X}}: \Pi_{[\pr^{2/1}_{X},s]}\to\Pi_{[\pr^{1}_{X},s]}$. Therefore, by \cref{l64}, there exists a decuspidalization $\mathcal{F}:\mathcal{D}'\to\mathcal{D}$ such that $H\simeq \Pi_{\mathcal{D}'}$, $J\simeq \Pi_{\mathcal{D}}$, and $\F_{X}|_{H}$ may be identified with $\pi_{\mathcal{F}}$, relative to the above isomorphisms. Thus, by \cref{l29}(iii), one concludes that $(\F_{X}|_{H})^{\abt}$ is an isomoprhism.
\end{proof}

\begin{defi}\label{l31}
	Let $k$ be a mixed characteristic local field of residue characteristic $p$; $X$ a smooth hyperbolic curve over $k$; $s:G_{k}\rightarrow \Pi_{X}$ a section. Then we shall say that an open normal subgroup $H$ of $\Pi_{[\pr^{2/1}_{X},s]}$ is \emph{of admissible type} if $H$ contains the kernel of the naturally induced morphism $\Pi_{[\pr^{2/1}_{X},s]}\to (\Pi_{[\pr^{2/1}_{X},s]})^{\abt}$.
\end{defi}

\begin{prop}\label{l32}
	Let $k$ be a mixed characteristic local field of residue characteristic $p$; $X$ a smooth hyperbolic curve over $k$; $s:G_{k}\rightarrow \Pi_{X}$ a section; $l\neq p$ a prime number. Then for any open normal subgroup $H\subset \Pi_{[\pr^{2/1}_{X},s]}$ of admissible type corresponding to an \etale covering $\D$ of $[\pr^{2/1}_{X},s]$, 
	\begin{displaymath}
		\Rank_{p}((\Pi_{\D})^{\abt})-\Rank_{l}((\Pi_{\D})^{\abt})=[k_{H}:\Q_{p}],
	\end{displaymath}
	where $k_{H}$ is the finite field extension of $k$ that corresponds to the image of $H$ in $G_{k}$.
\end{prop}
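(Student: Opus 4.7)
The plan is to apply Proposition~\ref{l30}(iv) to transfer the computation of $(\Pi_{\D})^{\abt}$ to $J^{\abt}$, and then to evaluate the rank difference by relating $J$ to an étale cover of $X$ itself. More concretely, the first projection $\pr^1_X: X\times_k X\to X$ admits the diagonal $\Delta_X\colon X\hookrightarrow X\times_k X$ as a canonical section; together with the identification $\Pi_{X\times_k X}\simeq \Pi_X\times_{G_k}\Pi_X$---valid because $X$ is hyperbolic, so the geometric fundamental group is topologically finitely generated and SGA~1-type results apply---a direct pullback calculation yields a natural isomorphism $\Pi_{[\pr^1_X,s]}\simeq \Pi_X$ compatible with the projections to $G_k$. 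Under this identification $J$ becomes an open subgroup of $\Pi_X$ that contains $\ker(\Pi_X\to\Pi_X^{\abt})$ (a consequence of admissibility of $H$ combined with the isomorphism $\pi_{\F_X}^{\abt}$ from Proposition~\ref{l30}(iii)); hence $J\simeq\Pi_{X'}$ for a finite abelian étale cover $X'\to X$ that is geometrically connected over $k_H$, and $(\Pi_{\D})^{\abt}\simeq \Pi_{X'}^{\abt}$.

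Next I would compute $\Rank_{l'}(\Pi_{X'}^{\abt})$ for $l'\in\{l,p\}$ via the Hochschild--Serre spectral sequence applied to $1\to\Delta_{X'}\to\Pi_{X'}\to G_{k_H}\to 1$ with $\Q_{l'}$-coefficients. The key input is the vanishing $H^2(G_{k_H},\Q_{l'})=0$, which follows from local Tate duality together with $\Q_{l'}(1)^{G_{k_H}}=0$: Proposition~\ref{l25} shows that $G_{k_H}$ is $l'$-cyclotomically full for every prime $l'$. The resulting short exact sequence gives
\begin{displaymath}
\Rank_{l'}(\Pi_{X'}^{\abt})=\Rank_{l'}(G_{k_H}^{\abt})+\dim_{\Q_{l'}}H^1(\Delta_{X'},\Q_{l'})^{G_{k_H}}.
\end{displaymath}
Subtracting the two cases $l'=p$ and $l'=l$ and applying Proposition~\ref{l25} to the base-field term produces $[k_H:\Q_p]$ plus the difference of the geometric invariant dimensions.

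To conclude it remains to verify that $\dim H^1(\Delta_{X'},\Q_p)^{G_{k_H}}=\dim H^1(\Delta_{X'},\Q_l)^{G_{k_H}}$. Propositions~\ref{l14} and~\ref{l05}(ii) identify $\Delta_{X'}^{\ab}$ with the Tate module of the Albanese variety $A'$ of $X'$, which is an extension of an abelian variety $B'$ by a torus $T'$ whose character group is determined by the cusps of ${X'}^{\cl}$. The toric part contributes zero for both $l'=l$ and $l'=p$, since the Galois action factors through the cyclotomic character, which has open image in $(\Z_{l'}^{\times})^{\tf}$ by Proposition~\ref{l25}. The main obstacle is the abelian contribution: for a general abelian variety over a local field $\dim V_p(B')^{G_{k_H}}$ and $\dim V_l(B')^{G_{k_H}}$ can differ, so the desired cancellation must be extracted from the specific origin of $X'$ as a cover constrained by the admissibility of $H$, combined with the self-duality provided by the Weil pairing (Proposition~\ref{l06}). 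Handling this geometric cancellation---plausibly by reducing via Proposition~\ref{l19}(iv) to the numerical rank formulae that hold uniformly in $l'$, and exploiting that the determinant of the Galois action on $V_{l'}(A')$ is a power of the cyclotomic character (Theorem~\ref{l18}(iii))---is the step I expect to require the most care.
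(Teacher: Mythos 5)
Your reduction coincides with the paper's: you use \cref{l30}(iii),(iv) to replace $(\Pi_{\D})^{\abt}$ by $J^{\abt}$, and you identify $J$ with $\Pi_{X'}$ for an \etale cover $X'\to X$ geometrically connected over $k_{H}$ --- this is exactly the paper's passage to the covering ``$Y$'' via \cref{l33}(i) --- and your Hochschild--Serre setup, with the correct vanishing $H^{2}(G_{k_{H}},\Q_{l'})=0$, is a valid way to organize what remains. The difference lies in how the remaining arithmetic fact is handled. The paper simply invokes \cref{l33}(iii), i.e., [MZK6], Lemma 1.1.4(ii), which asserts $\Rank_{p}((\Pi_{Y})^{\abt})-\Rank_{l}((\Pi_{Y})^{\abt})=[k:\Q_{p}]$ for an arbitrary hyperbolic curve $Y$ over a mixed-characteristic local field $k$; you instead attempt to prove this from scratch and, as you yourself acknowledge, do not close the argument.

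The gap is precisely the equality $\dim H^{1}(\Delta_{X'},\Q_{p})^{G_{k_{H}}}=\dim H^{1}(\Delta_{X'},\Q_{l})^{G_{k_{H}}}$. This is a statement about the dimension of a Galois-fixed subspace, and none of the tools you propose controls it: \cref{l19}(iv) computes the full rank $2g+r-1$ of $(\Delta_{X'})^{\abt}$, which is indeed uniform in the prime but irrelevant, since the fixed subspace is in general a proper subspace; \cref{l18}(iii) and the Weil pairing of \cref{l06} constrain only the determinant, i.e., the top exterior power of the Galois action, not the multiplicity of the trivial (or cyclotomic) eigenvalue; and appealing to the ``specific origin of $X'$ from the admissibility of $H$'' is a red herring, since the needed statement is true for, and must be proved for, arbitrary hyperbolic curves over such fields. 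Establishing it requires the weight--monodromy/semistable-reduction analysis of the Galois action on the (generalized) Jacobian for $l\neq p$ together with a $p$-adic Hodge-theoretic argument for the prime $p$ --- exactly the content of the cited Lemma 1.1.4(ii) of [MZK6], which is the single external input on which the paper's proof rests and which your argument is missing.
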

\begin{proof}
	By \cref{l30}(iii) and \cref{l31}, $H$ is the inverse image of a normal open subgroup $J\subset \Pi_{[\pr^{1}_{X},s]}$ by the morphism $\Pi_{[\pr^{2/1}_{X},s]}\rightarrow \Pi_{[\pr^{1}_{X},s]}$. By \cref{l33}(i) below, $J$ corresponds to an \etale covering $Y\rightarrow X$ such that $J\simeq \Pi_{[\pr^{1}_{J},s_{H}]}$, where
	\begin{displaymath}
		\pr^{1}_{J}:X\times_{k}Y\simeq(X\times_{k}k_{H})\times_{k_{H}}Y\rightarrow X\times_{k}k_{H}
	\end{displaymath}
	denotes the first projection, and $s_{H}:G_{k_{H}}\rightarrow \Pi_{X\times_{k}k_{H}}$ the restriction of $s$ to $G_{k_{H}}$. By \cref{l29}, $H\simeq \Pi_{[f,s_{H}]}$, where $f$ denotes the morphism naturally induced by the first projection
	\begin{displaymath}
		(X\times_{k}Y)\times_{(X\times_{k}X)}X_{2}\rightarrow X\times_{k}k_{H}.
	\end{displaymath}
	Denote $[\pr^{1}_{J},s_{H}]$ by $\D_{J}$. Then by \cref{l30}(iv),
	\begin{align*}
	\Rank_{p}((\Pi_{\D})^{\abt})-\Rank_{l}((\Pi_{\D})^{\abt}) &=\\
	\Rank_{p}(H^{\abt})-\Rank_{l}(H^{\abt}) &=\\
	\Rank_{p}((J)^{\abt})-\Rank_{l}((J)^{\abt}) &=\\
	\Rank_{p}((\Pi_{\D_{J}})^{\abt})-\Rank_{l}((\Pi_{\D_{J}})^{\abt}) &.
	\end{align*}
	Moreover, by \cref{l33}(iii) below, 
	\begin{displaymath}
		\Rank_{p}(\Pi_{\D_{J}}^{\abt})-\Rank_{l}(\Pi_{\D_{J}}^{\abt})=[k_{H}:\Q_{p}].
	\end{displaymath}
	This completes the proof of \cref{l32}.
\end{proof}

\begin{lemm}\label{l33}
	Let $k$ be a field of characteristic zero; $\bar{k}$ an algebraic closure of $k$; $X$, $Y$ smooth hyperbolic curves over $k$; $[f,s]$ a pointed virtual curve with base $k$, such that $f$ is the projection $X\times_{k}Y\rightarrow X$, and $Y$ is an \etale covering of $X$. Then the following hold:
	
	(i) The exact sequence
	\begin{displaymath}
		1\rightarrow \Delta_{[f,s]}\rightarrow \Pi_{[f,s]}\rightarrow G_{k}\rightarrow 1
	\end{displaymath}
	is naturally isomorphic to 
	\begin{displaymath}
		1\rightarrow \Pi_{Y\times_{k}\bar{k}}\rightarrow \Pi_{Y}\rightarrow G_{k}\rightarrow 1.
	\end{displaymath}
	This isomorphism is induced by the projection
	\begin{displaymath}
		X\times_{k}Y\rightarrow Y.
	\end{displaymath}

	(ii) Assume that there exists a prime number $l$ such that $k$ is $l$-cyclotomically full. Then $\cusp([f,s])$ is naturally isomorphic to the set of conjugacy classes of inertia subgroups of the cusps of $Y$.
	
	(iii) Suppose further that $k$ is a mixed characteristic local field of residue characteristic $p$. Let $l\neq p$ be a prime number. Then
	\begin{displaymath}
	\Rank_{p}((\Pi_{Y})^{\abt})-\Rank_{l}((\Pi_{Y})^{\abt})=[k:\Q_{p}].
	\end{displaymath}
\end{lemm}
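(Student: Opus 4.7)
\emph{Part (i).} I would view $[f,s]$ as a pointed virtual curve with defining morphism $\pr_1 : X \times_k Y \to X$, whose geometric fiber over any geometric point $\bar{x}$ of $X$ is canonically $Y \times_k \bar{k}$. Proposition \ref{l01} then yields a natural isomorphism $\Pi_{Y \times_k \bar{k}} \simto \Delta_{[f,s]}$. The second projection $\pr_2 : X \times_k Y \to Y$ induces a map $\Pi_{X \times_k Y} \to \Pi_Y$ compatible with the surjections to $G_k$; its restriction to $\Delta_{[f,s]}$ lands in $\Pi_{Y \times_k \bar{k}}$, and a check on geometric fibers shows it inverts the isomorphism of \cref{l01}. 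A five-lemma argument applied to the induced morphism of exact sequences (identity on $G_k$, the inverse isomorphism on geometric parts, the map induced by $\pr_2$ on the middle term) then produces (i).

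\emph{Part (ii).} By (i), the exact sequence for $[f,s]$ is naturally isomorphic to the one for $\Pi_Y$. \cref{l39}(ii) tells us that $\cusp([f,s])$ is reconstructed functorially from its exact sequence, is independent of the chosen section, and is preserved by finite field extensions. Extending $k$ finitely so that $X$ acquires a $k$-rational point $x$ and replacing $s$ by the section arising from $x$, \cref{l39}(iii) gives $\cusp([f,s]) = \cusp((X \times_k Y)_x)$; since the fiber of $\pr_1$ over a $k$-rational $x$ is canonically $Y$, we conclude $\cusp([f,s]) \cong \cusp(Y)$.

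\emph{Part (iii).} I would apply the Hochschild--Serre five-term exact sequence in continuous $\Z_l$-homology to $1 \to \Delta_Y \to \Pi_Y \to G_k \to 1$. Local Tate duality together with $G_k$ having $l$-cohomological dimension $2$ and finite $H^0(G_k,\mu_{l^n})$ forces $H_2(G_k,\Z_l)$ to be finite, so tensoring with $\Q_l$ yields the short exact sequence
\begin{displaymath}
0 \to (\Delta_Y^{\ab} \otimes \Q_l)_{G_k} \to (\Pi_Y)^{\ab} \otimes \Q_l \to (G_k)^{\ab} \otimes \Q_l \to 0,
\end{displaymath}
hence $\Rank_l((\Pi_Y)^{\abt}) = \dim_{\Q_l}(\Delta_Y^{\ab} \otimes \Q_l)_{G_k} + \Rank_l((G_k)^{\abt})$. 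Combined with \cref{l25}, (iii) reduces to the claim that $\dim_{\Q_l}(\Delta_Y^{\ab} \otimes \Q_l)_{G_k}$ is independent of $l$. Using \cref{l05}(ii), one identifies $\Delta_Y^{\ab} \otimes \Z_l$ with the $l$-adic Tate module of the semi-abelian Albanese $A'$ of $Y$; the short exact sequence decomposing $T_l(A')$ into its toric and abelian parts then reduces this independence to standard results on Galois coinvariants of Tate modules of semi-abelian varieties over local fields (Grothendieck's local monodromy theorem for $l \neq p$, $p$-adic Hodge theory for $l = p$), which express the common dimension as a geometric invariant of $A'$ (essentially the $k$-split toric rank of its N\'eron model).

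\emph{Main obstacle.} The principal technical hurdle is the last step of (iii). The Galois action on $V_l(A')$ is in general non-semisimple (due to monodromy), and reconciling the $l \neq p$ and $l = p$ cases to a common geometric answer requires qualitatively different foundational input in each regime before one can conclude the independence-of-$l$ statement.
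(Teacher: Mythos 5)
Your proposal is correct in overall architecture and agrees with the paper on part (ii) (extend $k$ so that the base of the family acquires a rational point, invoke the independence statements of \cref{l39}(ii), then apply \cref{l39}(iii) to identify $\cusp([f,s])$ with the cusps of the fiber $Y$), but it takes genuinely different routes on (i) and (iii). For (i), the paper does not use \cref{l01} and the five lemma; it instead establishes the K\"unneth-type decomposition $\Pi_{(X\times_{k}Y)\times_{k}\bar{k}}\simeq\Pi_{X\times_{k}\bar{k}}\times\Pi_{Y\times_{k}\bar{k}}$ via log smoothness and [\cite{Hos}, Theorem 2], from which the identifications $\Delta_{[f,s]}\simeq\Pi_{Y\times_{k}\bar{k}}$ and $\Pi_{[f,s]}\simeq\Pi_{Y}$ are immediate. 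Your argument --- \cref{l01} applied to the fiber of $\pr_{1}$, the observation that $(\pr_{2})_{*}$ carries $\Delta_{[f,s]}$ into $\Delta_{Y}$ compatibly with the projections to $G_{k}$, then the five lemma --- is equally valid and arguably lighter, since it reuses \cref{l01} rather than importing a log-geometric input. For (iii), the paper simply cites [\cite{MZK6}, Lemma 1.1.4(ii)]; your Hochschild--Serre reduction (using finiteness of $H_{2}(G_{k},\Z_{l})$ and \cref{l25}) to the statement that $\dim_{\Q_{l}}(\Delta_{Y}^{\ab}\otimes\Q_{l})_{G_{k}}$ is independent of $l$ is in effect a reproof of that cited lemma, and the ``main obstacle'' you identify is precisely its content. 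If you want your version to be self-contained, note two points: coinvariants are only right exact, so the passage through the toric/abelian decomposition of $T_{l}(A')$ [cf. \cref{l05}(ii), \cref{l14}] requires a weight or monodromy-filtration argument to control the kernel of $(V_{l}(T))_{G_{k}}\to(V_{l}(A'))_{G_{k}}$; and the $l=p$ case genuinely requires $p$-adic Hodge-theoretic (or Raynaud-type semistable reduction) input, which is why deferring to the citation is the economical choice the paper makes.
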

\begin{proof}
	Assertion (i) follows from the fact that
	\begin{displaymath}
		\Pi_{(X\times_{k}Y)\times_{k}\bar{k}}\simeq \Pi_{X\times_{k}\bar{k}}\times \Pi_{Y\times_{k}\bar{k}},
	\end{displaymath}
	which follows from the fact that both $X$ and $Y$ are the interiors of log smooth curves over $k$, and [\cite{Hos}, Theorem 2].
	To show assertion (ii), one may assume without loss of generality that $Y$ admits a $k$-rational point. Then assertion (ii) follows from \cref{l39}(ii)(iii).
	Assertion (iii) follows from [\cite{MZK6}, Lemma 1.1.4(ii)]
\end{proof}

\begin{defi}\label{l46}
	Let $k$ be a mixed characteristic local field of residue characteristic $p$; $X$ a smooth hyperbolic curve over $k$; $s:G_{k}\rightarrow \Pi_{X}$ a section. Then we say that an open normal subgroup $H\subset \Pi_{[\pr^{2/1}_{X},s]}$ that is of admissible type is \emph{of field extension type} if
	\begin{displaymath} 
		\Rank_{p}(H^{\abt})-\Rank_{l}(H^{\abt})=
		[\Pi_{[\pr^{2/1}_{X},s]}:H](\Rank_{p}((\Pi_{[\pr^{2/1}_{X},s]})^{\abt})-\Rank_{l}((\Pi_{[\pr^{2/1}_{X},s]})^{\abt})).
	\end{displaymath}	
	Let $J$ be a topologically finitely generated closed normal subgroup of $\Pi_{X}$. Then we say that $J$ is \emph{of geometric type} if for any subgroup $
	H\subset \Pi_{[\pr^{2/1}_{X},s]}$ of field extension type, $J\subset H$.
\end{defi}



\begin{theo}\label{l37}
	Let $k$ be a mixed characteristic local field of residue characteristic $p$; $X$ a smooth hyperbolic curve over $k$; $s:G_{k}\rightarrow \Pi_{X}$ a section. Then the subgroup $\Delta_{[\pr^{2/1}_{X},s]}$ may be purely group-theoretically reconstructed from the abstract group $\Pi_{[\pr^{2/1}_{X},s]}$, in a fashion that is functorial with respect to isomorphisms of profinite groups. 
\end{theo}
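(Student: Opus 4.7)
The plan is to characterize $\Delta\coloneqq\Delta_{[\pr^{2/1}_{X},s]}$ group-theoretically as the unique maximal topologically finitely generated closed normal subgroup of $\Pi\coloneqq\Pi_{[\pr^{2/1}_{X},s]}$ that is \emph{of geometric type} in the sense of \cref{l46}. Since each condition appearing in this characterization --- being topologically finitely generated, closed, and normal, together with admissibility and field extension type [cf. \cref{l31}, \cref{l46}] --- is formulated purely in terms of the abstract group structure of $\Pi$ and the ranks of abelianizations of its open subgroups, the resulting reconstruction will be automatically functorial with respect to isomorphisms of profinite groups.

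The first step would be to identify which open normal subgroups $H\subset\Pi$ of admissible type are also of field extension type. By \cref{l32} applied to $H$ and to $\Pi$, the defining equation of \cref{l46} reduces to $[k_{H}:k]=[\Pi:H]$, where $k_{H}/k$ denotes the finite extension corresponding to the image of $H$ in $G_{k}$. Combining this with the elementary identity
\begin{displaymath}
[\Pi:H]=[k_{H}:k]\cdot[\Delta:H\cap\Delta],
\end{displaymath}
which follows from the second isomorphism theorem applied to $1\to\Delta\to\Pi\to G_{k}\to 1$, one sees that field extension type is equivalent to the inclusion $H\supset\Delta$. Consequently the open normal subgroups of admissible and field extension type are precisely those $H\supset\Delta\cdot[\Pi,\Pi]$; and since $\Pi/(\Delta\cdot[\Pi,\Pi])\simeq G_{k}^{\ab}$ is a Hausdorff profinite abelian group, the intersection of all such $H$ is exactly $\Delta\cdot[\Pi,\Pi]$. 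Hence a topologically finitely generated closed normal subgroup of $\Pi$ is of geometric type if and only if it is contained in $\Delta\cdot[\Pi,\Pi]$, and by \cref{l19}(i), $\Delta$ itself satisfies this.

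The principal obstacle will be to show that $\Delta$ is the \emph{maximum} subgroup of geometric type, not merely a maximal one. Given such a $J\subset\Delta\cdot[\Pi,\Pi]$, write $\bar{J}$ for its image in $G_{k}$; then $\bar{J}$ is a topologically finitely generated closed normal subgroup of $G_{k}$ contained in $[G_{k},G_{k}]$. Invoking the elasticity of the absolute Galois group $G_{k}$ of a mixed-characteristic local field --- a property analogous to the very elasticity used in the proof of \cref{l38} [cf. \cite{MZK3}, Theorem 1.7] --- one concludes that $\bar{J}$ is either trivial or open in $G_{k}$. The latter alternative is incompatible with the inclusion $\bar{J}\subset[G_{k},G_{k}]$, since it would force the finite quotient $G_{k}/\bar{J}$ to surject onto the infinite profinite group $G_{k}^{\ab}\simeq\widehat{k^{\times}}$ [cf. \cref{l25} and local class field theory]. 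Hence $\bar{J}=1$, so $J\subset\Delta$, completing the desired characterization.
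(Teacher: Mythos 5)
Your proof is correct and follows essentially the same route as the paper: using \cref{l32} to show that, among admissible-type subgroups, field extension type is equivalent to containing $\Delta_{[\pr^{2/1}_{X},s]}$, and then deducing maximality among subgroups of geometric type from the elasticity of $G_{k}$ together with the infinitude of its torsion-free abelianization. The only (harmless) imprecision is that admissible type requires containing the kernel of $\Pi_{[\pr^{2/1}_{X},s]}\to (\Pi_{[\pr^{2/1}_{X},s]})^{\abt}$ rather than merely the closure of the commutator subgroup, so the intersection of the relevant open subgroups is $\Delta_{[\pr^{2/1}_{X},s]}\cdot\ker(\Pi_{[\pr^{2/1}_{X},s]}\to (\Pi_{[\pr^{2/1}_{X},s]})^{\abt})$; your final step then goes through verbatim with $G_{k}^{\abt}$ in place of $G_{k}^{\ab}$.
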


\begin{proof}
	
	By \cref{l32} and \cref{l46}, any subgroup of $\Pi_{[\pr^{2/1}_{X},s]}$ that is of field extension type contains $\Delta_{[\pr^{2/1}_{X},s]}$, as well as the commutator of $\Pi_{[\pr^{2/1}_{X},s]}$. Therefore, $\Delta_{[\pr^{2/1}_{X},s]}$ is of geometric type. 
	We claim that any subgroup $J$ of  $\Pi_{[\pr^{2/1}_{X},s]}$ that is of geometric type is contained in $\Delta_{[\pr^{2/1}_{X},s]}$. Indeed, consider the image $I$ of $J$ in $G_{k}\simeq \Pi_{[\pr^{2/1}_{X},s]}/\Delta_{[\pr^{2/1}_{X},s]}$. Since $I$ is a topologically finitely generated closed normal subgroup of $G_{k}$, by the elasticity of $G_{k}$ [cf. \cite{MZK3}, Theorem 1.7(ii)], $I$ is either open or trivial. On the other hand, $I$ is contained in the intersection of the images in $G_{k}$ of the open subgroups $H\subset\Pi_{[\pr^{2/1}_{X},s]}$ of field extension type, hence is of infinite index in $G_{k}$ (since $G_{k}^{\abt}$ is infinite). Thus, $J\subset \Delta_{[\pr^{2/1}_{X},s]}$, as desired. Therefore, $\Delta_{[\pr^{2/1}_{X},s]}$ may be characterized as the maximal subgroup of $\Pi_{[\pr^{2/1}_{X},s]}$ that is of geometric type.
\end{proof}

\section{Automorphisms of 2-Configuration Spaces}\label{Automorphisms of 2-Configuration Spaces}

In \cref{Weights of Determinant Maps} and \cref{Characterization of Geometric Fundamental Groups}, we discussed various ways to characterize various objects of geometric origin. In this section, we first construct the function field of a smooth, geometrically connected curve of type $(0,3)$ over either a mixed-characteristic local field or a number field from the virtual fundamental group associated to a Galois section of the curve.
Then, by generalizing the previous construction, we construct various scheme-theoretic objects, such as $\mathbf{F}$-graphs (where $\mathbf{F}$ is the category of fields, [cf. \cref{Notations and Terminologies}, the discussion entitled ``Graphs and Categories'']) whose vertices map to function fields of curves of type $(0,n)$, where $n\geq 3$, over either a mixed-characteristic local field or a number field, from certain virtual fundamental groups.



\begin{prop}\label{l48}
	Let $k$ be a field of characteristic zero such that there exists a prime number $l$ such that $k$ is $l$-cyclotomically full; $\bar{k}$ an algebraic closure of $k$; $X$ a smooth hyperbolic curve over $k$; $s:G_{k}\rightarrow \Pi_{X}$ a section; $\diag_{X}\in \Cusp([\pr^{2/1}_{X},s])$ the conjugacy class of cuspidal inertia subgroups determined by the decuspidalization $[\pr^{2/1}_{X},s]\to [\pr^{1}_{X},s]$ of pointed virtual curves [cf. \cref{l66}(i), \cref{l30}]; $H$ the normal closed subgroup of $\Pi_{[\pr^{2/1}_{X},s]}$ generated by $\diag_{X}$. Then the following exact sequences
	\begin{displaymath}
	1\rightarrow \Delta_{[\pr^{2/1}_{X},s]}/H\rightarrow \Pi_{[\pr^{2/1}_{X},s]}/H\rightarrow G_{k}\rightarrow 1
	\end{displaymath}
	\begin{displaymath}
	1\rightarrow \Delta_{[\pr^{1}_{X},s]}\rightarrow \Pi_{[\pr^{1}_{X},s]}\rightarrow G_{k}\rightarrow 1
	\end{displaymath}
	\begin{displaymath}
	1\rightarrow \Pi_{X\times_{k}\bar{k}}\rightarrow \Pi_{X}\rightarrow G_{k}\rightarrow 1
	\end{displaymath}
	are naturally isomorphic. Moreover, there exists a natural bijection between $\cusp([\pr^{1}_{X},s])\simeq\cusp([\pr^{2/1}_{X},s])\setminus\{\diag_{X}\}$ and the set of conjugacy classes of inertia subgroups of cusps of $X$.

\end{prop}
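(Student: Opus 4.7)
The strategy is to reduce all three claimed isomorphisms of exact sequences and the stated bijection on cuspidal inertia classes to results already established earlier. For the isomorphism between the first and second exact sequences, I apply \cref{l30}(ii), which asserts that the natural morphism $\F_X\colon [\pr^{2/1}_X,s]\to [\pr^1_X,s]$ is a decuspidalization of pointed virtual curves; since the diagonal in $X\times_k X$ is a section of $\pr^1_X$, this decuspidalization is of degree one. Then \cref{l29}(ii) identifies the common kernel of $\pi_{\F_X}$ and $\delta_{\F_X}$ with the normal closed subgroup of $\Pi_{[\pr^{2/1}_X,s]}$ generated by the unique conjugacy class of cuspidal inertia subgroups determined by $\F_X$, which is $\diag_X$ by definition. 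Hence this common kernel coincides with $H$, so $\F_X$ descends to a compatible isomorphism between the first and second exact sequences.

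For the isomorphism between the second and third exact sequences, I invoke \cref{l33}(i) with ``$Y$'' taken to be $X$, regarded as an \etale cover of itself via the identity. This directly yields a natural isomorphism of the two extensions, induced by the projection $\pr^2_X\colon X\times_k X \to X$. Composing with the previous step exhibits all three sequences as naturally isomorphic, and compatibility of the maps to $G_k$ is automatic because every morphism in sight is a morphism of pointed virtual varieties over $\sp(k)$.

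For the bijection on cuspidal inertia classes, \cref{l33}(ii), again with ``$Y$'' $=X$, produces a natural bijection between $\cusp([\pr^1_X,s])$ and the set of conjugacy classes of inertia subgroups of cusps of $X$. To obtain $\cusp([\pr^{2/1}_X,s])\setminus\{\diag_X\}\simeq \cusp([\pr^1_X,s])$, I use that \cref{l29}(ii) exhibits $\delta_{\F_X}$ as a surjection killing precisely the class $\diag_X$; to upgrade this to a bijection on the remaining classes, I pass via \cref{l39}(ii) to a finite extension of $k$ over which $s$ arises from a $k$-rational point $y\in Y=X$, reduce by \cref{l02} and \cref{l39}(iii) to the corresponding statement for \etale fundamental groups of the fibers over $y$, and then invoke the well-known scheme-theoretic fact that the cusps of the fiber $X\setminus\{y\}$ of $\pr^{2/1}_X$ are exactly the cusps of $X$ together with the single new cusp at $y$, the latter being removed upon compactifying to the fiber $X$ of $\pr^1_X$. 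The main technical obstacle is this last bookkeeping check, i.e., verifying that all of these identifications --- the quotient by $H$, the isomorphism of \cref{l33}(i), and the two cusp bijections --- are mutually compatible; but after the reduction to the pointed case this is purely scheme-theoretic.
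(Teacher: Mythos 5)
Your proposal is correct and follows essentially the same route as the paper: the paper likewise obtains the isomorphism between the first two sequences from \cref{l29}(ii) (via the decuspidalization of \cref{l30}(ii), whose kernel is generated by $\diag_{X}$, i.e., by $H$), and the isomorphism with the third sequence together with the cusp bijection from \cref{l33} applied with ``$Y$'' $=X$. Your extra bookkeeping for the bijection $\cusp([\pr^{2/1}_{X},s])\setminus\{\diag_{X}\}\simeq\cusp([\pr^{1}_{X},s])$ via \cref{l39} is a reasonable elaboration of a step the paper leaves implicit.
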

\begin{proof}
	It follows from \Cref{l29}(ii) that the first and the second exact sequences are isomorphic. It follows from \Cref{l33} that the second and the third exact sequences are isomorphic, and that there exists a natural bijection between $\cusp([\pr^{1}_{X},s])$ and the set of conjugacy classes of inertia subgroups of cusps of $X$.
\end{proof}

	
	In the following proposition, we recall some well-known results from anabelian geometry.
	
\begin{prop}\label{l72}
	Let $k$ be either a number field or a mixed-characteristic local field; $X$ a smooth curve of strictly Belyi type  over $k$. Then there exists a group-theoretic algorithm for reconstructing the function field $K(X)$ of $X$, as well as the set of valuations of $K(X)$ that arise from the cusps of $X$ and the inclusion $k\injto K(X)$ of fields, from the exact sequence of profinite groups
	\begin{displaymath}
		1\to \Delta_{X}\to \Pi_{X}\to G_{k}\to 1,
	\end{displaymath}
	in a fashion that is functorial with respect to isomorphisms of exact sequences of profinite groups.
	
	
\end{prop}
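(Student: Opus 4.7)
The plan is to invoke the results of \cite{MZK2}, Chapter 1, concerning the absolute anabelian reconstruction of the function field of a hyperbolic curve of strictly Belyi type, and then to make them compatible with the additional data required here (the set of cuspidal valuations and the field extension $k\hookrightarrow K(X)$). I will treat the number field and mixed-characteristic local field cases in parallel, separating them only when the reconstruction of the base field $k$ itself is needed.

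First, I would reduce to the case of a curve of type $(0,r)$ with $r\geq 3$. Since $X$ is of strictly Belyi type, by definition there exists a connected scheme $Z$ and finite étale coverings $Z\to X$ and $Z\to X'$, where $X'$ is of type $(0,r)$ with $r\geq 3$. Group-theoretically, this corresponds to choosing a suitable open subgroup $\Pi_Z\subset \Pi_X$ and an overgroup $\Pi_Z\subset \Pi_{X'}$, the existence of such a pair being encoded in the abstract group-theoretic structure of $\Pi_X$; the set of cuspidal inertia subgroups of $\Pi_Z$ (known to be reconstructible from the exact sequence $1\to \Delta_Z\to \Pi_Z\to G_k\to 1$ by \cref{l39}, or indeed by classical Mochizuki-type arguments) propagates correctly through these isogenies.

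Next, I would apply the Belyi cuspidalization technique of \cite{MZK2}, Chapter 1, to $\Pi_{X'}$: from the exact sequence $1\to \Delta_{X'}\to \Pi_{X'}\to G_k\to 1$ together with the reconstructible set $\cusp(X')$, one reconstructs the decomposition groups of \emph{all} closed points of $X'$ (not merely the cusps) by exhibiting them as stabilizers associated to suitably chosen Belyi cuspidalizations; the multiplicative monoid of nonzero rational functions on $X'$ is then reconstructed via Kummer-theoretic evaluation of sections against these decomposition groups, which yields $K(X')^{\times}$, the additive structure being determined by the ``additive structure'' reconstruction argument of \cite{MZK2}, Chapter 1. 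The cuspidal valuations of $K(X')$ are, by construction, precisely those corresponding to the classes in $\cusp(X')$, and the inclusion $k\hookrightarrow K(X')$ is read off from the subgroup $G_k=\Pi_{X'}/\Delta_{X'}$. In the mixed-characteristic local case, $k$ is recovered from $G_k$ by the standard local anabelian results \cite{MZK1}; in the number field case, $k$ is recovered by Neukirch--Uchida.

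Finally, I would descend from $X'$ to $X$ along the isogeny: the intermediate covering $Z\to X$ corresponds to an open subgroup $\Pi_Z\subset \Pi_X$, and the function field $K(Z)$ is a finite extension of $K(X')$ reconstructed from $\Pi_Z\subset \Pi_{X'}$ by the same procedure applied to $\Pi_Z$. Then $K(X)$ is recovered as the fixed field of $K(Z)$ under the deck-transformation action of $\Pi_X/\Pi_Z$ (which is visible inside $\mathrm{Aut}(\Pi_Z)$ via conjugation by $\Pi_X$), and the cuspidal valuations of $K(X)$ are the images of the cuspidal valuations of $K(Z)$ under the natural surjection on valuation sets. Functoriality with respect to isomorphisms of exact sequences of profinite groups follows because every step of the reconstruction is phrased purely in terms of the abstract profinite group structure and the distinguished normal subgroup $\Delta_X$. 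The main obstacle is the bookkeeping in the descent step: one must verify that the reconstructed rational function field, the cuspidal valuations, and the inclusion of $k$ are independent of the auxiliary choices of $Z$ and $X'$, which is handled by the compatibility of Belyi cuspidalization with finite étale coverings as established in \cite{MZK2}, Chapter 1.
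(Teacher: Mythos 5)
Your proposal is correct and follows essentially the same route as the paper: the paper's entire proof is a direct citation of \cite{MZK2}, Theorem 1.9 and Corollary 1.10, and your argument is precisely an unpacking of the Belyi-cuspidalization and Kummer-theoretic machinery behind those results. The preliminary reduction to a curve of type $(0,r)$ and the subsequent descent along the isogeny are unnecessary, since the cited results of \cite{MZK2} apply directly to any hyperbolic curve of strictly Belyi type over a number field or mixed-characteristic local field.
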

\begin{proof}
	This follows immediately from \cite{MZK2}, Theorem 1.9, and \cite{MZK2}, Corollary 1.10. 
	
	
\end{proof}


\begin{lemm}\label{l55}
	Let $k$ be either a number field or a mixed-characteristic local field; $X$ a smooth curve of type $(0,n)$ over $k$ that is defined over a number field. Suppose that $n\geq 3$. Then $X$ is of strictly Belyi type [cf. \cite{MZK5}, Definition 3.5].
\end{lemm}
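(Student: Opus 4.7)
The plan is to verify the definition of strictly Belyi type, as given in \cref{Notations and Terminologies} (the discussion entitled ``Schemes and Curves''), directly from the hypotheses. That definition requires two conditions: first, that $X$ be defined over a number field (in the sense of being the base change of a curve over a number field); second, that $X$ be isogenous to a smooth curve of type $(0, r)$ for some $r \geq 3$, where the relevant notion of isogeny is the existence of a connected scheme admitting finite surjective \'etale coverings to both curves.

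The first condition is immediate from the hypothesis. For the second condition, I would observe that $X$ itself is a smooth curve of type $(0, n)$ with $n \geq 3$, and that $X$ is trivially isogenous to itself: one may take $Z = X$ together with the identity \'etale covering $X \to X$ on both sides in the definition of isogenous. Taking $r = n$, the second condition is thereby verified.

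Thus the proof consists of an immediate unwinding of the definitions, and there is no substantive obstacle. The citation of \cite{MZK5}, Definition 3.5 in the statement is presumably for comparison with Mochizuki's original terminology, which is the framework underlying the group-theoretic reconstruction invoked in \cref{l72}; one may check without further difficulty that the formulation adopted in the present paper agrees with the MZK5 formulation in this setting. In particular, no appeal to Belyi's theorem is needed here, since the definition used in this paper permits the target curve of the isogeny to be of any type $(0, r)$ with $r \geq 3$, rather than specifically of type $(0, 3)$.
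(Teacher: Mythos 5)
Your proof is correct and matches the paper's, which simply states that the claim ``follows immediately from [MZK5], Definition 3.5'': in both cases the point is that $X$ is defined over a number field by hypothesis and is itself a hyperbolic curve of genus zero, hence trivially isogenous (via $Z=X$ and the identity coverings) to a curve of type $(0,r)$ with $r\geq 3$. Your additional remark that no appeal to Belyi's theorem is needed is accurate.
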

\begin{proof}
	This follows immediately from \cite{MZK5}, Definition 3.5.
\end{proof}

Generally speaking, in the situation of \cref{l48}, one cannot group-theoretically characterize the element $\diag_{X}\in\C$. However, in the special case of type $(0,3)$, one may show that \emph{every} class of cuspidal inertia subgroups that is fixed by the conjugation action of $G_{k}$ may be regarded as the class that represents the diagonal. 


First, we review basic facts concerning the automorphism groups of curves and $2$-configuration spaces.

\begin{lemm}\label{l47}
	Let $k$ be a field of characteristic zero; $X$ a smooth curve over $k$ that is isomorphic to $ \mathbf{P}^{1}_{k}\setminus\{0,1,\infty\}$. 
	
	(i) One may regard $X$ as the moduli space $\mathcal{M}_{0,4}$ of four-pointed genus zero curves with ordered marked points. In particular, there exists a natural $\mathbb{S}_{4}$-action on $X$. Denote the image of $\mathbb{S}_{4}$ in the group $\Aut_{k}(X)$ of $k$-automorphisms of $X$ by $H$.
	
	(ii) One may regard $X_{2}$ as the moduli space $\mathcal{M}_{0,5}$ of five-pointed genus zero curves with ordered marked points. In particular, there exists a natural faithful $\mathbb{S}_{5}$-action on $X_{2}$. Denote the image of $\mathbb{S}_{5}$ in the group $\Aut_{k}(X_{2})$ of $k$-automorphisms of $X_{2}$ by $G$.
	
	(iii) One may regard the projection $\pr^{2/1}_{X}$ as the natural morphism $\mathcal{M}_{0,5}\to \mathcal{M}_{0,4}$ given by forgetting the fifth marked point.

	
	(iv)  $H=\Aut_{k}(X)$; moreover, $H$ is isomorphic to $\mathbb{S}_{3}$ and acts faithfully on the set of $3$ cusps in $X$.
	
	(v) The subgroup $I$ of $G$ that consists of $f\in G$ such that there exists $h\in H$ and a commutative diagram
	\begin{displaymath}
		\xymatrix{
			X_{2}\ar^{f}[r]\ar^{\pr^{2/1}_{X}}[d] &X_{2}\ar^{\pr^{2/1}_{X}}[d]\\
			X\ar^{h}[r]& X
		}
	\end{displaymath}
	is isomorphic to $\mathbb{S}_{4}$.
	
	(vi) Note that for any $f\in I$, the $h\in H$ that satisfies the commutativity property in (v) is unique. In particular, we obtain a homomorphism $u:I\to H$, whose kernel $J$ is isomorphic to $\Z/2\Z\times \Z/2\Z$. If $s:G_{k}\to \Pi_{X}$ is a section, then elements of $J$ may be regarded as automorphisms of $[\pr^{2/1}_{X},s]$.
	
	(vii) Let $x\in X$ be a $k$-rational point. Then $J$ permutes the four cusps in the fiber $(X_{2})_{x}\simeq X\setminus x$. The image of the induced outer homomorphism $J\rightarrow \mathbb{S}_{4}$ is the intersection of all Sylow-$2$ subgroups in $\mathbb{S}_{4}$. 
\end{lemm}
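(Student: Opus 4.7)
The plan is to exploit the identifications $X\simeq \mathcal{M}_{0,4}$ and $X_{2}\simeq \mathcal{M}_{0,5}$ and then reduce each assertion to an elementary fact about the symmetric groups $\mathbb{S}_{3}$, $\mathbb{S}_{4}$, $\mathbb{S}_{5}$.

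For assertions (i)--(iii), I would fix the identification $X\simeq \mathcal{M}_{0,4}$ via $t\mapsto (\mathbf{P}^{1}_{k};0,1,\infty,t)$, which induces the natural $\mathbb{S}_{4}$-action by permutation of the four marked points, and similarly $X_{2}\simeq \mathcal{M}_{0,5}$ via $(s,t)\mapsto (\mathbf{P}^{1}_{k};0,1,\infty,s,t)$, with its natural $\mathbb{S}_{5}$-action (faithful because a generic $5$-pointed genus-zero curve has no nontrivial automorphisms). Under these identifications, the first projection $\pr^{2/1}_{X}:X_{2}\to X$ visibly corresponds to the forgetful morphism $\mathcal{M}_{0,5}\to \mathcal{M}_{0,4}$ that forgets the fifth marked point. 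For (iv), any $k$-automorphism of $X$ extends uniquely to a Möbius transformation of $\mathbf{P}^{1}_{k}$ preserving $\{0,1,\infty\}$, so $\Aut_{k}(X)\simeq \mathbb{S}_{3}$; on the other hand, the $\mathbb{S}_{4}$-action on $\mathcal{M}_{0,4}$ factors through the standard surjection $\mathbb{S}_{4}\surjto \mathbb{S}_{3}$ whose kernel is the normal Klein four-subgroup $V_{4}$ (which acts trivially because it preserves the cross-ratio), and the resulting $\mathbb{S}_{3}$ acts faithfully on the three cusps.

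For (v) and (vi), the key observation is that an element $f\in G\simeq \mathbb{S}_{5}$ preserves the fibration $\pr^{2/1}_{X}$ if and only if it fixes the label ``$5$'', since the fibers of the forgetful morphism $\mathcal{M}_{0,5}\to \mathcal{M}_{0,4}$ are precisely the $\stab_{\mathbb{S}_{5}}(5)$-orbits. Hence $I\simeq \stab_{\mathbb{S}_{5}}(5)\simeq \mathbb{S}_{4}$. For (vi), uniqueness of $h$ given $f$ follows from the faithfulness of $H$ on $X$ established in (iv), so $u:I\to H$ is a well-defined homomorphism; under the identifications above it coincides with the standard surjection $\mathbb{S}_{4}\surjto \mathbb{S}_{3}$, so $J=\ker u\simeq V_{4}\simeq \Z/2\Z\times \Z/2\Z$. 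Since each element $f\in J$ descends to the identity on $X$, the triple $(f,\id,\id)$ satisfies the commutativity conditions of \cref{l45} relative to any section $s:G_{k}\to \Pi_{X}$, so $J$ acts by automorphisms of $[\pr^{2/1}_{X},s]$.

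Finally, for (vii), I would fix a $k$-rational point $x\in X$ corresponding to $(0,1,\infty,a)$, so that the fiber $(X_{2})_{x}$ is $\mathbf{P}^{1}_{k}\setminus \{0,1,\infty,a\}$, with its four cusps labeled tautologically by the four marked points $\{1,2,3,4\}$. For each $\sigma\in J\subset \mathbb{S}_{4}$, since $\sigma$ acts trivially on the cross-ratio of the first four marked points, there exists a unique Möbius transformation $\phi_{\sigma}$ realizing the permutation $\sigma$ of $\{0,1,\infty,a\}$; the action of $\sigma$ on the fiber is induced by $\phi_{\sigma}$, so the image of the outer homomorphism $J\to \mathbb{S}_{4}$ is the tautological embedding of $V_{4}$ as the normal Klein four-subgroup of $\mathbb{S}_{4}$. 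It remains to verify the purely group-theoretic fact that this normal $V_{4}$ equals the intersection of all Sylow-$2$ subgroups of $\mathbb{S}_{4}$: the three Sylow-$2$ subgroups are the dihedral stabilizers of the three partitions of $\{1,2,3,4\}$ into two pairs, each contains $V_{4}$, while every transposition and every $4$-cycle lies in only one of them, so the intersection is precisely $V_{4}$. The main subtlety is the explicit realization of the $J$-action on the fiber cusps via the Möbius transformations $\phi_{\sigma}$; the remainder is routine.
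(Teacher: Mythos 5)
Your proposal is correct, and it supplies in full the argument that the paper compresses into a single sentence (``the assertions follow immediately from the various definitions involved''): the identifications $X\simeq\mathcal{M}_{0,4}$, $X_{2}\simeq\mathcal{M}_{0,5}$, the factorization of the $\mathbb{S}_{4}$-action through $\mathbb{S}_{4}\surjto\mathbb{S}_{3}$ with kernel the cross-ratio-preserving Klein subgroup, the identification $I=\stab_{\mathbb{S}_{5}}(5)$, and the computation that the intersection of the Sylow-$2$ subgroups of $\mathbb{S}_{4}$ is the normal Klein four-group are exactly the facts being invoked. Two small imprecisions are worth correcting, though neither affects the conclusion: in (vi), the uniqueness of $h$ follows from the surjectivity (dominance) of $\pr^{2/1}_{X}$, not from the faithfulness of the $H$-action on $X$; and in (v), the fibers of $\mathcal{M}_{0,5}\to\mathcal{M}_{0,4}$ are one-dimensional, hence are certainly not the $\stab_{\mathbb{S}_{5}}(5)$-orbits --- the correct justification is that a permutation $\sigma$ carries the fibration ``forget the $\sigma^{-1}(5)$-th point'' to the fibration ``forget the fifth point'', and these five fibrations are pairwise distinct, so $\sigma$ preserves $\pr^{2/1}_{X}$ if and only if $\sigma(5)=5$.
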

\begin{proof}
	The assertions of \cref{l47} follow immediately from various definitions involved.
\end{proof}


\begin{defi}
	Let $k$ be a field of characteristic zero; $X$ a smooth curve over $k$ that is isomorphic to $ \mathbf{P}^{1}_{k}\setminus\{0,1,\infty\}$. Denote by $\sxxx$ (respectively, $\sx$; $\sxx$; $\vx$) the group $G$ (respectively, $H$, $I$, $J$) constructed in \cref{l47}. Denote by $F_{X}:\sxx\to \sx$ the morphism $u$ constructed in \cref{l47}.
\end{defi}

\begin{lemm}\label{l67}
	Let $k$ be a field; $k'$ a finite \emph{Galois} field extension of $k$. Let $X$, $Y$ be schemes over $k$ such that there exists a $k'$-isomorphism $f:X\times_{k}k'\simto Y\times_{k}k'$. Then there exists a(n) (necessarily unique) $k$-isomorphism $h:X\simto Y$ such that $f=h\times_{k}k'$ if and only if for any element $\alpha\in\gal(k'/k)$, $(Y \times_{k}\alpha)\circ f=f\circ (X\times_{k}\alpha)$.
	
\end{lemm}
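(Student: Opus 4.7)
The plan is to recognize this as a standard instance of Galois (faithfully flat) descent along $\sp(k')\to \sp(k)$. The ``only if'' direction is immediate: if $f=h\times_k k'$, then both $(Y\times_k \alpha)\circ f$ and $f\circ (X\times_k \alpha)$ equal $h\times_k \alpha$, since $h$ is defined over $k$ and the Galois action only touches the second factor.

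For the ``if'' direction, I would let $p_X\colon X\times_k k'\to X$ and $p_Y\colon Y\times_k k'\to Y$ denote the natural projections (which are faithfully flat, since $k\to k'$ is) and set $g\coloneqq p_Y\circ f$. The evident identity $p_Y\circ (Y\times_k \alpha)=p_Y$ combined with the hypothesis yields
\begin{displaymath}
g\circ(X\times_k \alpha)=p_Y\circ (Y\times_k\alpha)\circ f=p_Y\circ f=g
\end{displaymath}
for every $\alpha\in\gal(k'/k)$, so $g$ is invariant under the $\gal(k'/k)$-action on $X\times_k k'$ (and the trivial action on $Y$). Since $p_X$ is a $\gal(k'/k)$-torsor, faithfully flat (Galois) descent of morphisms then produces a unique $k$-morphism $h\colon X\to Y$ satisfying $g=h\circ p_X$.

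To identify $f$ with $h\times_k k'$, I observe that $h\times_k k'$ and $f$ are both $k'$-morphisms $X\times_k k'\to Y\times_k k'$ whose composites with $p_Y$ agree (both equal $g$); the universal property of the fiber product $Y\times_k k'$ over $\sp(k')$ then forces $f=h\times_k k'$. Finally, $h$ is necessarily an isomorphism: applying the same descent argument to $f^{-1}$ (which satisfies the analogous compatibility, since taking inverses converts the displayed identity into the same identity with $\alpha$ replaced by $\alpha^{-1}$) yields a $k$-morphism $Y\to X$ whose base change to $k'$ is $f^{-1}$, and this must be the two-sided inverse of $h$ by the faithful flatness of $k\to k'$. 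Uniqueness of $h$ is immediate from the same faithful flatness. The argument is essentially routine, the only substantive ingredient being Galois descent for morphisms, and I foresee no real obstacle; if one preferred to avoid citing descent, the argument can equivalently be carried out by hand via the decomposition $(X\times_k k')\times_X (X\times_k k')\cong \coprod_{\alpha\in\gal(k'/k)} X\times_k k'$, under which the Galois-invariance of $g$ is precisely the descent cocycle condition.
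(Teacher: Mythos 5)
Your proof is correct and follows essentially the same route as the paper, which simply cites \cite{SGA1}, Expos\'e VIII, Th\'eor\`eme 5.2 (faithfully flat descent); your argument is a careful unpacking of exactly that Galois-descent statement, including the identification of the invariance condition with the descent cocycle condition on $(X\times_k k')\times_X(X\times_k k')\cong\coprod_{\alpha}X\times_k k'$.
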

\begin{proof}
	The content of this lemma follows immediately from \cite{SGA1}, Expos\'e VIII, Th\'eor\`eme 5.2.
\end{proof}

\begin{coro}\label{l68}
	Let $k$ be a field; $k'$ a finite \emph{Galois} extension of $k$. Let $X$, $Y$ be geometrically integral schemes over $k$ such that there exists a $k'$-isomorphism $f:X\times_{k}k'\simto Y\times_{k}k'$. 
	For $k^{\dagger}\in \{k,k'\}$, $Z\in\{X,Y\}$, write $\Aut_{k^{\dagger}}(Z\times_{k}k')$ for the group of $k^{\dagger}$-automorphisms of $Z\times_{k}k'$. For $Z\in\{X,Y\}$, the action on the integral closure of $k$ in the ring of global sections induce an exact sequence
	\begin{displaymath}
		1\to \Aut_{k'}(Z\times_{k}k') \to \Aut_{k}(Z\times_{k}k') \to \gal(k'/k)\to 1,
	\end{displaymath}
	and the fiber product by $Z$ over $k$ on the left induces a section $s_{Z}:\gal(k'/k)\to \Aut_{k}(Z\times_{k}k')$. Denote by $F$ the group isomorphism $\Aut_{k}(X\times_{k}k')\to \Aut_{k}(Y\times_{k}k')$ naturally induced by conjugation by $f$, i.e., $f\circ t=F(t)\circ f$ for any $t\in \Aut_{k}(X\times_{k}k')$. Then there exists a $k$-isomorphism $g:X\to Y$ if and only if the sections $F\circ s_{X}$, $s_{Y}$ are conjugate by an element in $\Aut_{k}(Y\times_{k}k')$ (or equivalently, $F^{-1}\circ s_{Y}$, $s_{X}$ are conjugate by an element in $\Aut_{k}(X\times_{k}k')$). 
\end{coro}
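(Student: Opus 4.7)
The plan is to deduce the corollary from \cref{l67} via Galois descent: the conjugation hypothesis will be converted into a Galois-equivariant $k'$-isomorphism $X\times_{k}k'\simto Y\times_{k}k'$, and \cref{l67} will then descend this to the required $k$-isomorphism $X\simto Y$.

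For the forward direction, suppose $g:X\simto Y$ is a $k$-isomorphism, and set $h\coloneqq (g\times_{k}k')\circ f^{-1}\in\Aut_{k'}(Y\times_{k}k')\subset\Aut_{k}(Y\times_{k}k')$. Using the defining property $f\circ t = F(t)\circ f$ of $F$, together with the fact that $g\times_{k}k'$ commutes with the descent data $s_{X}$, $s_{Y}$ (since $g$ is a $k$-morphism), one verifies directly that $h\circ(F\circ s_{X})(\alpha)\circ h^{-1}=s_{Y}(\alpha)$ for all $\alpha\in\gal(k'/k)$, establishing the desired conjugacy.

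The reverse direction is the heart of the argument. Suppose $h\in\Aut_{k}(Y\times_{k}k')$ conjugates $F\circ s_{X}$ to $s_{Y}$. The main step, which I expect to be the principal obstacle, is to reduce to the case $h\in\Aut_{k'}(Y\times_{k}k')$. To do this, I would identify $\Aut_{k}(Y\times_{k}k')$ with the semi-direct product $\Aut_{k'}(Y\times_{k}k')\rtimes\gal(k'/k)$ induced by $s_{Y}$; in this description, the section $s_{Y}$ corresponds to the trivial $1$-cocycle, while $F\circ s_{X}$ corresponds to the $1$-cocycle $c(\alpha)\coloneqq (F\circ s_{X})(\alpha)\circ s_{Y}(\alpha)^{-1}$ with values in $\Aut_{k'}(Y\times_{k}k')$. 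Writing out the conjugation condition inside the semi-direct product then shows that the image $\beta$ of $h$ in $\gal(k'/k)$ is forced to lie in the center of $\gal(k'/k)$, and that $c$ is a coboundary $c(\alpha)=m^{-1}\alpha(m)$ for an explicit $m\in\Aut_{k'}(Y\times_{k}k')$ constructed from $h$ by absorbing the central twist (essentially $m=\beta^{-1}(n)$, where $n$ is the $\Aut_{k'}$-component of $h$). The element $m$ then realizes the conjugation already inside $\Aut_{k'}(Y\times_{k}k')$.

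Once $h$ can be taken in $\Aut_{k'}(Y\times_{k}k')$, set $\phi\coloneqq h\circ f$, which is a $k'$-isomorphism. The defining identity of $F$ and the conjugation hypothesis combine to give $s_{Y}(\alpha)\circ\phi=\phi\circ s_{X}(\alpha)$ for every $\alpha\in\gal(k'/k)$, so \cref{l67} yields the required $k$-isomorphism $g:X\simto Y$ with $g\times_{k}k'=\phi$. The equivalent reformulation in terms of $F^{-1}\circ s_{Y}$ and $s_{X}$ in $\Aut_{k}(X\times_{k}k')$ follows from the symmetric argument obtained by swapping the roles of $X$ and $Y$ and using $f^{-1}$ in place of $f$.
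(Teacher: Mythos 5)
Your proposal is correct and follows the same basic route as the paper: both directions are reduced to \cref{l67} by translating the conjugacy condition into the existence of a Galois-equivariant $k'$-isomorphism $\phi$ with $s_{Y}(\alpha)\circ\phi=\phi\circ s_{X}(\alpha)$. The one genuine difference is your intermediate reduction from conjugacy by a general element of $\Aut_{k}(Y\times_{k}k')$ to conjugacy by an element of $\Aut_{k'}(Y\times_{k}k')$, carried out via the semi-direct product decomposition and the cocycle $c(\alpha)=(F\circ s_{X})(\alpha)\circ s_{Y}(\alpha)^{-1}$. The paper's proof does not perform this step: it establishes the equivalence only for conjugating elements of the form $f\circ(f')^{-1}$, which automatically lie in the subgroup $\Aut_{k'}(Y\times_{k}k')$, and leaves implicit why conjugacy by an arbitrary element of the larger group $\Aut_{k}(Y\times_{k}k')$ (as in the literal statement of the corollary) still yields descent. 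Your computation --- that the image $\beta$ of the conjugating element in $\gal(k'/k)$ must be central, and that the twist by $\beta$ can be absorbed into the coboundary witness $m={}^{\beta^{-1}}(n)$ --- is exactly what is needed to close this small gap, so your argument is in fact slightly more complete than the one in the paper. Both directions of your verification (the forward direction via $h=(g\times_{k}k')\circ f^{-1}$, and the reverse via $\phi=h\circ f$ followed by \cref{l67}) check out.
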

\begin{proof}
	By \cref{l67}, there exists a $k$-isomorphism $h:X\simto Y$ if and only if there exists a $k'$-isomorphism $f':X\times_{k}k'\simto Y\times_{k}k'$ such that for any element $\alpha\in\gal(k'/k)$, $(Y\times_{k}\alpha)\circ f'=f'\circ (X\times_{k}\alpha)$. On the other hand, this condition is equivalent to the condition that for any element $\alpha\in\gal(k'/k)$, 
	\begin{displaymath}
		f\circ (f')^{-1}\circ (s_{Y}(\alpha))\circ f'\circ f^{-1}=F(s_{X}(\alpha)),
	\end{displaymath}
	i.e., $F\circ s_{X}$ and $s_{Y}$ are conjugate by $f\circ (f')^{-1}\in\Aut_{k}(Y\times_{k}k')$.
\end{proof}


Next, we observe that in the case of a smooth, geometrically connected curve of type $(0,3)$ over either a mixed-characteristic local field or a number field, one may reconstruct the function field  of the curve from the virtual fundamental group associated to a Galois section of the curve.


\begin{prop}\label{l49}
	In the situation of \cref{l48}, assume further that $k$ is a number field or a mixed-characteristic local field. Then the following hold:
	
	(i) The exact sequence
	\begin{displaymath}
	1\rightarrow \Delta_{[\pr^{2/1}_{X},s]}\rightarrow \Pi_{[\pr^{2/1}_{X},s]}\rightarrow G_{k}\rightarrow 1
	\end{displaymath}
	may be reconstructed from the abstract profinite group $\Pi_{[\pr^{2/1}_{X},s]}$.
	
	(ii) The type $(g,r)$ may be reconstructed from the abstract profinite group $\Pi_{[\pr^{2/1}_{X},s]}$, in a fashion that is functorial with respect to isomorphisms of profinite groups.
	
	(iii) $\cusp([\pr^{2/1}_{X},s])$ may be reconstructed from the abstract profinite group $\Pi_{[\pr^{2/1}_{X},s]}$, in a fashion that is functorial with respect to isomorphisms of profinite groups.
	
	(iv) Suppose that $(g,r)=(0,3)$. Then $X$ is isomorphic to $\mathbb{P}^{1}_{k}\setminus\{0,1,\infty\}$ if and only if the action of $G_{k}$ (or equivalently, $\Pi_{[\pr^{2/1}_{X},s]}$) on $\cusp([\pr^{2/1}_{X},s])$ by conjugation is trivial. 
	
	(v) Suppose that $X$ satisfies the equivalent conditions in (iv). Then elements in $\sxx$ induce natural automorphisms of the virtual variety $[\pr^{2/1}_{X},s]$, hence (by (iii)) determine a morphism 
	\begin{displaymath}
		\sxx\to \Aut_{\set}(\cusp([\pr^{2/1}_{X},s]))\simeq \mathbb{S}_{4},
	\end{displaymath}
	which is an isomorphism that maps $\vx$ to the intersection of all Sylow-$2$ subgroups in $\mathbb{S}_{4}$.
	
	(vi) Suppose that $X$ satisfies the equivalent conditions in (iv). Then one may reconstruct from the abstract profinite group $\Pi_{[\pr^{2/1}_{X},s]}$
	\begin{enumerate}
		\item[\it{(1)}] four fields, each of which is isomorphic to the function field $K(X)$ of $X$;
		\item[\it{(2)}]  an isomorphism between any two of the four fields
	\end{enumerate}
	such that the isomorphisms of (2) are closed under composition, in a fashion that is functorial with respect to isomorphisms of profinite groups.
	
\end{prop}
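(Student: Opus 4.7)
The plan is to address (i)--(vi) sequentially: the first three parts are essentially formal consequences of earlier machinery, while the last three require the specific structure of type $(0,3)$. Specifically, (i) is \cref{l37} applied to $(X,s)$; given (i), (ii) follows from \cref{l34} (applied with $H = \Pi_{[\pr^{2/1}_X,s]}$) together with \cref{l19}(ii), noting that $k$ is $l$-cyclotomically full for some prime $l$ (automatic for number fields and given by \cref{l25} for mixed-characteristic local fields); and (iii) then follows from \cref{l39}(ii) applied to the reconstructed exact sequence.

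For (iv), by \cref{l48} there is a natural $G_k$-equivariant bijection $\cusp([\pr^{2/1}_X,s]) \setminus \{\diag_X\} \simto \cusp(X)$, and $\diag_X$ is $G_k$-fixed since the diagonal embedding $X \hookrightarrow X \times_k X$ is defined over $k$. Hence the $G_k$-action on $\cusp([\pr^{2/1}_X,s])$ is trivial if and only if the $G_k$-action on $\cusp(X)$ is trivial, if and only if the three geometric cusps of $X$ are $k$-rational. In that case $X^{\cl}$ is a genus-zero curve with three $k$-rational points, hence $X^{\cl} \cong \mathbb{P}^1_k$; a M\"obius transformation moves these three points to $0,1,\infty$, yielding $X \cong \mathbb{P}^1_k \setminus \{0,1,\infty\}$. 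The converse is immediate.

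For (v), by \cref{l47} and (iv), $\sxx \cong \mathbb{S}_4$ acts on $X_2$ over $k$ by automorphisms covering the $\sx \cong \mathbb{S}_3$-action on $X$, inducing compatible outer actions on the respective fundamental groups. For each $\sigma \in \sxx$, the outer automorphism $\sigma_*$ of $\Pi_{X_2}$ carries the subgroup $\Pi_{[\pr^{2/1}_X,s]}$ (the preimage of $s(G_k)$) to the preimage of $u(\sigma)_*(s(G_k))$; at the abstract level these two subgroups of $\Pi_{X_2}$ are canonically identified via $\sigma_*$ itself (up to inner automorphism), yielding an outer action of $\sxx$ on $\Pi_{[\pr^{2/1}_X,s]}$. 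Composing with (iii) gives a morphism $\sxx \to \Aut_{\set}(\cusp([\pr^{2/1}_X,s])) \cong \mathbb{S}_4$. Both sides have order $24$; injectivity follows from the faithful $\sxxx$-action on $X_2$ combined with the functorial reconstruction of cusps, so the morphism is an isomorphism. The image of $\vx$ is given directly by \cref{l47}(vii).

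For (vi), for each $c \in \cusp([\pr^{2/1}_X,s])$ we quotient $\Pi_{[\pr^{2/1}_X,s]}$ by the closed normal subgroup generated by a representative of $c$; by (an argument analogous to) \cref{l48}, the resulting quotient is the exact sequence of a type-$(0,3)$ curve $Y/k$ that still satisfies condition (iv), so $Y \cong \mathbb{P}^1_k \setminus \{0,1,\infty\}$ is of strictly Belyi type by \cref{l55}. \cref{l72} then recovers the function field $K(Y) \cong K(X)$, producing a field $K_c$ for each of the four cusps. For any ordered pair $(c,c')$, (v) furnishes $\sigma \in \sxx$ with $\sigma(c) = c'$, whose functorial action on the reconstructed data induces an isomorphism $K_c \simto K_{c'}$; the collection of such group-action-induced isomorphisms is closed under composition by associativity of the $\sxx$-action. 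The main obstacles are: in (v), verifying that the outer $\sxx$-action on $\Pi_{X_2}$ descends to a well-defined outer action on the abstract group $\Pi_{[\pr^{2/1}_X,s]}$ (requiring careful tracking of the section $s$ up to $\Delta_X$-conjugation); and in (vi), handling the non-uniqueness of $\sigma$ realizing $\sigma(c) = c'$, determined only up to $\stab_{\sxx}(c) \cong \mathbb{S}_3$, whose image in $\Aut(K_c)$ is the natural cusp-permuting automorphism group of $K(X)$.
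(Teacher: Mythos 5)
Parts (i)--(iii) match the paper's proof (which cites \cref{l34}(i), \cref{l39}(ii), \cref{l38}, \cref{l25}, \cref{l37}), and your proof of (iv) is a correct, more elementary alternative to the paper's route through \cref{l47}(iv) and the descent criterion of \cref{l68}. The real issues are in (v) and (vi), and they are exactly the two ``obstacles'' you name at the end without resolving; as written the proposal therefore has a genuine gap. In (v), for $\sigma\in\sxx\setminus\vx$ covering a nontrivial $h\in\sx$, the induced automorphism of $\Pi_{X_2}$ sends the subgroup $\Pi_{[\pr^{2/1}_X,s]}$ to $\Pi_{[\pr^{2/1}_X,\,h_*\circ s]}$, and there is no reason for $h_*\circ s$ to be conjugate to $s$; saying the two subgroups are ``canonically identified via $\sigma_*$ itself'' is circular. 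The paper's fix is not to act on $\Pi_{[\pr^{2/1}_X,s]}$ for such $\sigma$ at all, but to use the section-independence of $\Delta_{[\pr^{2/1}_X,s]}$ [cf. \cref{l24}] and of $\Cusp$ [cf. \cref{l39}(ii)]: every $\sigma\in\sxx$ acts on the common geometric part $\Delta$, hence on the four-element set $\cusp([\pr^{2/1}_X,s])$, which is all that (v) requires; \cref{l47}(v)(vi)(vii) together with \cref{l39}(iii) then identify the image of $\vx$. Your injectivity argument via ``the faithful $\sxxx$-action on $X_2$'' also does not by itself yield faithfulness on the four cusp classes.

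In (vi) the gap is more serious. You choose, for each ordered pair $(c,c')$, some $\sigma\in\sxx$ with $\sigma(c)=c'$; such a $\sigma$ is only determined up to $\stab_{\sxx}(c)\simeq\mathbb{S}_3$, and this stabilizer acts on $K_c$ through the full cusp-permuting $\mathbb{S}_3$ of $k$-automorphisms of $K(X)$, so your isomorphism $K_c\simto K_{c'}$ is not well defined and the resulting system cannot be closed under composition. The paper resolves this by using only the Klein four-group $\vx$, which (a) consists of automorphisms covering the identity of $X$, hence genuinely preserving the section and the subgroup $\Pi_{[\pr^{2/1}_X,s]}$, and (b) acts simply transitively on the four cusp classes, so the connecting element is unique and closure under composition is automatic. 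Moreover, to make the construction depend only on the abstract profinite group rather than on the geometric $\vx$-action, the paper proves a rigidity statement (its claim ($\star$)): the isomorphism $\theta^i_j:\Pi_i\to\Pi_j$ carrying each $H^i_m$ onto $H^j_{\sigma(m)}$ and compatible with the projections to $G_k$ is unique up to inner automorphism, because by \cref{l72} the $G_k$-compatible outer automorphisms of $\Pi_X$ are exactly $\sx\simeq\mathbb{S}_3$, which acts faithfully on the cusps. This uniqueness step is absent from your proposal and is what turns the construction into a reconstruction ``from the abstract profinite group''.
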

\begin{proof}
	Assertions (i), (ii), and (iii) follow immediately from \cref{l34}(i), \cref{l39}(ii), \cref{l38}, \cref{l25}, and \cref{l37}.
	To show assertion (iv), note that $\diag_{X} \in \cusp([\pr^{2/1}_{X},s])$ is fixed by the conjugation action of $G_{k}$. Therefore, by \cref{l30}(ii) and \cref{l48}, it suffices to show that $X$ is isomorphic to $\mathbb{P}^{1}_{k}\setminus\{0,1,\infty\}$ if and only if the action of $G_{k}$ on $\cusp([\pr^{1}_{X},s])$ by conjugation is trivial. By \cref{l48}, it suffices to show that $X$ is isomorphic to $\mathbb{P}^{1}_{k}\setminus\{0,1,\infty\}$ if and only if the action of $G_{k}$ on the $\cusp(X)$ by conjugation is trivial. This follows from \cref{l47}(iv) and \cref{l68}.
	
	Assertion (v) follows immediately from \cref{l39}(ii), (iii); \cref{l47}(v), (vi). 
	
	Next, we consider assertion (vi). Write $\{a_1,a_2,a_3,a_4\}$ for the 
	four elements of  $\cusp([\pr^{2/1}_{X},s])$. For each $i\in\{1,2,3,4\}$, write $H_{i}$ for the subgroup of $\Delta_{[\pr^{2/1}_{X},s]}$ generated by the cuspidal inertia subgroups in $a_i$ (so $H_{i}$ is normal
	in $\Pi_{[\pr^{2/1}_{X},s]}$); $\Pi_{i}$ for the quotient $\Pi_{[\pr^{2/1}_{X},s]}/H_{i}$; $\Delta_{i}$ for the kernel of the natural surjection $\Pi_{i}\surjto G_{k}$. For distinct $i,j\in\{1,2,3,4\}$,  denote the image of $H_{j}$ in $\Pi_{i}$ by $H^{i}_{j}$.
	We may assume without loss of generality that $a_{1}=\diag_{X}$. Therefore, by \cref{l48}, $\Pi_{1}$ is isomorphic to $\Pi_{X}$. We claim the following:
	
	($\star$) For distinct $i,j\in\{1,2,3,4\}$, denote by $\sigma$ the unique element in the Klein group (i.e., the intersection of all Sylow-$2$ subgroups of $\Aut(\{1,2,3,4\})\simeq \mathbb{S}_{4}$) such that $\sigma(i)=(j)$. Then, up to composition with an inner automorphism, there exists a unique isomorphism $\theta_{j}^{i}:\Pi_{i}\rightarrow \Pi_{j}$ such that $\theta_{j}^{i}$ maps $H^{i}_{m}$ onto $H^{j}_{\sigma(m)}$, and $\theta_{j}^{i}$ commutes with the natural surjections to $G_{k}$ up to composition with an inner automorphism.
	
	First, we show existence. By (v), one may take $\theta_{j}^{i}$ to be  the isomorphism induced by the natural action of $\sigma$ (considered as an element of $\vx$ [cf. \cref{l47}(vi)]) on $\Pi_{[\pr^{2/1}_{X},s]}$.
	
	
	Uniqueness follows from the following claim, which follows immediately from \cref{l72}:
	
	
	($\star\star$) The image of the natural injection
	\begin{displaymath}
	\sx\injto \Out(\Pi_{X})
	\end{displaymath}
	coincides with the set of elements that commute with the natural surjection $\Pi_{X}\surjto G_{k}$ up to composition with an inner automorphism.
	
	Now we have already reconstructed groups $\Pi_{i}$ and morphisms $\theta_{j}^{i}$ up to the composition with an inner automorphism. By \cref{l72}, this data allows one to reconstruct four function fields that are isomorphic to $K(X)$ and isomorphisms between them. Finally, it follows from the condition that $\theta_{j}^{i}$ maps $H^{i}_{m}$ onto $H^{j}_{\sigma(m)}$ that these isomorphisms satisfy the desired composition relations.
\end{proof}

Finally, we consider the situation discussed in \cref{l49} in the 
case where $g=0$ and the action of the Galois group of the base field 
on the set of cusps is trivial.

\begin{defi}\label{l100}
	Let $k$ be a field of characteristic zero such that there exists a prime number $l$ such that $k$ is $l$-cyclotomically full; $X$ a smooth hyperbolic over $k$ of type $(g,r)$; $s:G_{k}\rightarrow \Pi_{X}$ a section of the natural surjection $\Pi_{X}\surjto G_{k}$. Thus, we write $\pr^{2/1}_{X}:X_{2}\rightarrow X$ for the first projection. Then:
	
	(i) For any subset $S\subset \cusp([\pr^{2/1}_{X},s])$, write $\Pi_{[\pr^{2/1}_{X},s]}^{S}$ for the quotient of $\Pi_{[\pr^{2/1}_{X},s]}$ by the closed normal subgroup topologically generated by the elements of the conjugacy classes in $\cusp([\pr^{2/1}_{X},s])$ that are \emph{not} contained in $S$; $\Delta_{[\pr^{2/1}_{X},s]}^{S}$ for the kernel of the natural surjection $\Pi_{[\pr^{2/1}_{X},s]}^{S}\surjto G_{k}$.
	
	(ii) For any nonnegative integer $n$, denote by $\cusp^{n}([\pr^{2/1}_{X},s])$ the set of subsets of $\cusp([\pr^{2/1}_{X},s])$ of cardinality $n$.
	
	(iii) For any $\alpha\in \cusp([\pr^{2/1}_{X},s])$ and any $S\subset\cusp([\pr^{2/1}_{X},s])$, denote by $H^{S}_{[\pr^{2/1}_{X},s]}(\alpha)$ the image in $\Pi^{S}_{[\pr^{2/1}_{X},s]}$ of the closed normal subgroup of $\Pi_{[\pr^{2/1}_{X},s]}$ topologically generated by the elements of $\alpha$.

\end{defi}

\begin{lemm}\label{l69}
	Let $X$, $Y$ be smooth hyperbolic curves over a field $k$ of characteristic zero such that there exists a prime number $l$ such that $k$ is $l$-cyclotomically full. Let $f:X\to Y$ be an open immersion obtained by removing a single element $\in Y(k)$; $s:G_{k}\to \Pi_{X}$ a section. Write $s'\coloneqq f_{*}\circ s:G_{k}\to \Pi_{Y}$ for the induced section;  $u$ for the projection $X\times_{k} Y\to X$; $v$ for the projection $Y_{2}\times_{Y}X\to X$, where the fiber product $Y_{2}\times_{Y}X$ is taken relative to $\pr^{2/1}_{Y}$; $h$ for the open immersion $X_{2}\to Y_{2}\times_{Y}X$; $i$ for the open immersion $X\times_{k}X\to Y\times_{k}X$; $m$ for the open immersion $Y_{2}\times_{Y}X\to Y_{2}$; $j$ for the open immersion $X\times_{k}Y\to Y\times_{k}Y$.
	
	(i) $[u,s]$, $[v,s]$ are pointed virtual curves.

	(ii) $(i,\id,\id):[\pr^{1}_{X},s]\to [u,s]$ and $(h,\id,\id):[\pr^{2/1}_{X},s]\to [v,s]$ are degree $1$ decuspidalizations [cf. \cref{l66}(ii)].

	(iii)  The morphisms $j$ and $m$ induce isomorphisms between the exact sequences
	\begin{displaymath}
		1\to \Delta_{[u,s]}\to \Pi_{[u,s]}\to G_{k}\to 1,
	\end{displaymath}
	\begin{displaymath}
		1\to \Delta_{[v,s]}\to \Pi_{[v,s]}\to G_{k}\to 1
	\end{displaymath}
	and the exact sequences
	\begin{displaymath}
		1\to \Delta_{[\pr^{1}_{Y},s']}\to \Pi_{[\pr^{1}_{Y},s']}\to G_{k}\to 1,
	\end{displaymath}
	\begin{displaymath}
		1\to \Delta_{[\pr^{2/1}_{Y},s']}\to \Pi_{[\pr^{2/1}_{Y},s']}\to G_{k}\to 1.
	\end{displaymath}

\end{lemm}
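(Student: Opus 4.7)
The proof breaks naturally into the three assertions and I would address them in order, with (iii) being the place where one uses a genuine cohomological argument.

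For (i), I would verify that $u$ and $v$ are families of hyperbolic curves over $X$ by computing geometric fibers. The fiber of $u$ over any $\bar x\in X(\bar k)$ is $Y_{\bar k}$, which is hyperbolic by hypothesis; the fiber of $v$ over $\bar x$ is $Y_{\bar k}\setminus\{f(\bar x)\}$, which is of type $(g(Y),r(Y)+1)$ and hence also hyperbolic. The remaining conditions of \cref{l41} (normality, finite type, connectedness of the sources, and surjectivity of $u_\ast$, $v_\ast$) are immediate, using the standard homotopy exact sequence for a smooth family of geometrically connected curves with connected source to conclude surjectivity; the section $s$ and base data are already in hand.

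For (ii), I would exhibit the compactification data required by \cref{l29} uniformly for both morphisms. Let $Y^{\cl}$ denote the smooth compactification of $Y$, let $p\in Y(k)$ be the unique $k$-rational point with $Y\setminus f(X)=\{p\}$, and take $Z:=X\times_k Y^{\cl}$, which is proper smooth of relative dimension one over $X$. The boundary divisor of $[u,s]$ in $Z$ is $D:=X\times_k(Y^{\cl}\setminus Y)$, étale over $X$, while $[\pr^{1}_X,s]$ is recovered from $Z$ by removing the larger divisor $D\cup (X\times_k\{p\})$; the extra component $X\times_k\{p\}$ is étale of degree $1$ over $X$, which exhibits $(i,\id,\id)$ as a degree $1$ decuspidalization. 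The analogous analysis with the same $Z$ handles the second claim: the boundary of $[v,s]$ in $Z$ is $D\cup\Gamma_f$ (where $\Gamma_f\subset Z$ denotes the graph of $f$, étale over $X$), and the image of $h$ given by $(x_1,x_2)\mapsto(x_1,f(x_2))$ is the complement in $Z$ of $D\cup\Gamma_f\cup(X\times_k\{p\})$, again realizing a degree $1$ decuspidalization.

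For (iii), I would use the five lemma together with \cref{l01}. The data $(j,f,\id_k)$ defines a morphism of pointed virtual curves $[u,s]\to[\pr^{1}_Y,s']$ (commutativity of the relevant diagrams follows from $s'=f_\ast\circ s$ together with $\pr^{1}_Y\circ j=f\circ u$), which induces a map between the two exact sequences that is the identity on the quotient $G_k$. It therefore suffices to show that $\Delta_{[u,s]}\to\Delta_{[\pr^{1}_Y,s']}$ is an isomorphism. By \cref{l01}, both sides identify canonically with $\Pi_{Y_{\bar k}}$ via the fundamental group of a geometric fiber, and $j$ restricted to the fiber $\{\bar x\}\times_k Y_{\bar k}$ of $u$ is the canonical isomorphism to the fiber $\{f(\bar x)\}\times_k Y_{\bar k}$ of $\pr^{1}_Y$. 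The same argument applied to $m$, whose restriction to the fiber of $v$ over $\bar x$ is the canonical identification $Y_{\bar k}\setminus\{f(\bar x)\}\simto Y_{\bar k}\setminus\{f(\bar x)\}$ with the fiber of $\pr^{2/1}_Y$ over $f(\bar x)$, yields the second isomorphism. The only subtlety is the bookkeeping of the compactifications in (ii) and the explicit identification of the geometric fibers in (iii); no step presents a genuine obstacle beyond unwinding the definitions.
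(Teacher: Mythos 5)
Your proposal is correct and follows essentially the same route as the paper: the paper disposes of (i) and (ii) as immediate from the definitions (your compactification $Z=X\times_k Y^{\cl}$ with boundary divisors $D$, $\Gamma_f$, $X\times_k\{p\}$ is exactly the unwinding required), and proves (iii) by invoking \cref{l01} to identify the geometric virtual fundamental groups with the fundamental groups of the geometric fibers, just as you do. The only difference is that you spell out the fiberwise identifications and the five-lemma step that the paper leaves implicit.
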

\begin{proof}
	Assertions (i) and (ii) follow immediately from the various definitions involved. Assertion (iii) follows immediately from \cref{l01}.

	
\end{proof}

\begin{coro}\label{l70}
	In the situation of \cref{l69}, write $\alpha\in \cusp([\pr^{2/1}_{X},s])$ for the conjugacy class determined by $(h,\id,\id)$.
	Then the exact sequences
	\begin{displaymath}
	1\rightarrow \Delta_{[\pr^{2/1}_{Y},s']}\rightarrow \Pi_{[\pr^{2/1}_{Y},s']}\rightarrow G_{k}\rightarrow 1
	\end{displaymath}
	\begin{displaymath}
	1\rightarrow \Delta_{[\pr^{2/1}_{X},s]}^{\cusp([\pr^{2/1}_{X},s])\setminus\{\alpha\}}\rightarrow \Pi_{[\pr^{2/1}_{X},s]}^{\cusp([\pr^{2/1}_{X},s])\setminus\{\alpha\}}\rightarrow G_{k}\rightarrow 1
	\end{displaymath}
	[cf. \cref{l100}(i)] are naturally isomorphic.
\end{coro}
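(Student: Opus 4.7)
The plan is to exhibit the desired isomorphism as the composition of two identifications that have essentially already been established in the preceding material; so the argument should reduce to chasing definitions and checking that all morphisms are compatible with the projections to $G_{k}$.

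First, I would invoke \cref{l69}(ii), which asserts that $(h,\id,\id):[\pr^{2/1}_{X},s]\to [v,s]$ is a degree $1$ decuspidalization. By \cref{l66}(ii), such a decuspidalization determines a unique conjugacy class of cuspidal inertia subgroups of $\Pi_{[\pr^{2/1}_{X},s]}$, which, by the hypothesis of the corollary, is precisely $\alpha$. Next, I would apply \cref{l29}(ii) to this decuspidalization: the induced homomorphism
\begin{displaymath}
\pi_{(h,\id,\id)}:\Pi_{[\pr^{2/1}_{X},s]}\surjto \Pi_{[v,s]}
\end{displaymath}
is surjective and its kernel is the closed normal subgroup of $\Pi_{[\pr^{2/1}_{X},s]}$ topologically generated by (representatives of) the single conjugacy class $\alpha$. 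Comparing with \cref{l100}(i), this kernel is exactly the kernel of $\Pi_{[\pr^{2/1}_{X},s]}\surjto \Pi_{[\pr^{2/1}_{X},s]}^{\cusp([\pr^{2/1}_{X},s])\setminus\{\alpha\}}$, and hence $\pi_{(h,\id,\id)}$ factors through an isomorphism
\begin{displaymath}
\Pi_{[\pr^{2/1}_{X},s]}^{\cusp([\pr^{2/1}_{X},s])\setminus\{\alpha\}}\simto \Pi_{[v,s]}.
\end{displaymath}
Since the morphism of virtual varieties $(h,\id,\id)$ is the identity on the target and on the base, this isomorphism is compatible with the natural projections to $G_{k}$, and restricts to an isomorphism on the respective geometric kernels.

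The second step is to apply \cref{l69}(iii), which asserts that the open immersion $m:Y_{2}\times_{Y}X\to Y_{2}$ induces an isomorphism between the exact sequence
\begin{displaymath}
1\to \Delta_{[v,s]}\to \Pi_{[v,s]}\to G_{k}\to 1
\end{displaymath}
and
\begin{displaymath}
1\to \Delta_{[\pr^{2/1}_{Y},s']}\to \Pi_{[\pr^{2/1}_{Y},s']}\to G_{k}\to 1.
\end{displaymath}
Composing this isomorphism with the one obtained in the preceding paragraph yields the desired natural isomorphism of the two exact sequences in the statement of \cref{l70}.

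There is no real obstacle here: all the substantive work has been carried out in \cref{l29}, \cref{l66}, \cref{l100}, and \cref{l69}, and the present proof is a short diagram chase assembling them. The one point requiring a line of verification is that the kernel of $\pi_{(h,\id,\id)}$ coincides with the normal closed subgroup generated by \emph{precisely} the class $\alpha$ (and not some larger collection of conjugacy classes), which is guaranteed by the fact that $(h,\id,\id)$ is of degree $1$ and by the matching of the unique class produced by \cref{l29}(ii) with the class $\alpha$ as defined in \cref{l66}(ii).
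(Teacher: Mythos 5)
Your proposal is correct and follows essentially the same route as the paper, whose proof simply cites \cref{l29}(ii) together with the portions of \cref{l69}(ii)(iii) concerning $[v,s]$; you have merely spelled out the diagram chase (identifying the kernel of $\pi_{(h,\id,\id)}$ with the normal closed subgroup generated by $\alpha$, then transporting along the isomorphism induced by $m$) that the paper leaves implicit.
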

\begin{proof}
	\cref{l70} follows immediately from \cref{l29}(ii) and the portion of  \cref{l69}(ii)(iii) concerning $[v,s]$.
\end{proof}

\begin{coro}\label{l71}
	In the situation of \cref{l48}, suppose that $X$ is of type $(g,r)$. 
	Then:
	
	(i) There exists a natural bijection
	\begin{displaymath}
		\cusp([\pr^{2/1}_{X},s])\setminus \{\diag_{X}\}\simeq \cusp(X)
	\end{displaymath}
	of sets that is compatible with the respective natural $G_{k}$-actions.

	(ii) Assume that $r\geq 1$ and $2g-2+r\geq 2$. Then for every $\alpha\in \cusp([\pr^{2/1}_{X},s])\setminus \{\diag_{X}\}$,
	after possibly passing to a finite field extension of $k$, there exists an open immersion $f:X\to Y$ obtained by removing a single element $\in Y(k)$ such that $\alpha$ is determined by $f$ (where we regard $\alpha$ as an element of $\cusp(X)$ via the bijection of (1)). Moreover, $Y$ is of type $(g,r-1)$. Write $s'\coloneqq f_{*}\circ s: G_{k}\to \Pi_{Y}$. Then the exact sequences
		\begin{displaymath}
			1\rightarrow \Delta_{[\pr^{2/1}_{Y},s']}\rightarrow \Pi_{[\pr^{2/1}_{Y},s']}\rightarrow G_{k}\rightarrow 1
		\end{displaymath}
		\begin{displaymath}
			1\rightarrow \Delta_{[\pr^{2/1}_{X},s]}^{\cusp([\pr^{2/1}_{X},s])\setminus\{\alpha\}}\rightarrow \Pi_{[\pr^{2/1}_{X},s]}^{\cusp([\pr^{2/1}_{X},s])\setminus\{\alpha\}}\rightarrow G_{k}\rightarrow 1
		\end{displaymath}
		[cf. \cref{l100}(i)] are naturally isomorphic.

\end{coro}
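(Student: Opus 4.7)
The plan is to deduce both assertions directly from \cref{l48}, \cref{l69}, and \cref{l70} by carefully tracking the various natural bijections. For (i), I would compose the natural bijection
\[
\cusp([\pr^{2/1}_{X},s]) \setminus \{\diag_{X}\} \simeq \cusp([\pr^{1}_{X},s])
\]
induced by the decuspidalization $[\pr^{2/1}_{X},s] \to [\pr^{1}_{X},s]$ [cf.\ \cref{l48}, \cref{l29}(ii)] with the natural bijection $\cusp([\pr^{1}_{X},s]) \simeq \cusp(X)$ furnished by \cref{l48} (whose proof passes through the identification of \cref{l33}). Compatibility with the $G_{k}$-actions is automatic, since both bijections are induced by natural morphisms between exact sequences of profinite groups that commute with the natural surjections to $G_{k}$.

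For (ii), starting from $\alpha \in \cusp([\pr^{2/1}_{X},s]) \setminus \{\diag_{X}\}$, I would first apply (i) to identify $\alpha$ with a $G_{k}$-stable conjugacy class of cuspidal inertia subgroups of $\Delta_{X}$, equivalently, a $G_{k}$-orbit of cusps of $X$. After passing to a suitable finite field extension of $k$, I may assume this orbit consists of a single $k$-rational cusp $y$. Let $Y$ be the unique smooth curve obtained from $X$ by adjoining the point $y$, and let $f: X \to Y$ be the resulting open immersion. Then by construction $f$ removes a single element of $Y(k)$, and $Y$ is of type $(g, r-1)$. The hypothesis $2g - 2 + r \geq 2$ yields $2g + (r-1) \geq 3$, so $Y$ is hyperbolic, and all of the data of \cref{l69} is available with this choice of $f$.

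With this setup, \cref{l70} directly supplies the desired isomorphism of exact sequences, provided that the starting class $\alpha$ coincides with the conjugacy class $\alpha' \in \cusp([\pr^{2/1}_{X},s])$ determined, in the sense of \cref{l66}(ii), by the decuspidalization $(h,\id,\id): [\pr^{2/1}_{X},s] \to [v,s]$ of \cref{l69}(ii). The main technical step is the verification that $\alpha = \alpha'$. I expect this to follow from a direct fiberwise geometric check: the morphism $h: X_{2} \to Y_{2} \times_{Y} X$ (fiber product relative to $\pr^{2/1}_{Y}$) adds back, in each fiber of $\pr^{2/1}_{X}$ over a point $x \in X$, precisely the removed point $y \in Y \setminus X$, so $\alpha'$ corresponds, in the fiber $X \setminus \{x\}$, to the cusp of $X$ at $y$. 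Tracing this through the explicit construction of the bijection in (i) --- which factors via $[\pr^{2/1}_{X},s] \to [\pr^{1}_{X},s]$ and the identification of \cref{l33} --- shows that $y$ is also the cusp corresponding to $\alpha$ under (i), so $\alpha = \alpha'$. Once this compatibility is in hand, \cref{l70} closes the argument.
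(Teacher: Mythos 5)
Your proposal is correct and follows essentially the same route as the paper: assertion (i) is read off from the final portion of \cref{l48}, and assertion (ii) is obtained by using $\alpha$ to produce the open immersion $f:X\to Y$ of \cref{l69} and then invoking \cref{l70}. You in fact supply more detail than the paper does, in particular the explicit verification that $\alpha$ agrees with the conjugacy class determined by the decuspidalization $(h,\id,\id)$, which the paper leaves implicit in the phrase ``$\alpha$ determines an open immersion $f$''.
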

\begin{proof}
	
	Assertion (i) follows immediately from the final portion of  \cref{l48}.
	Next, we consider assertion (ii).  Observe that $\alpha$ determines an open immersion $f:X\to Y$ such that $f$, together with $s$, $s'$, satisfy the conditions of \cref{l69}. Therefore, assertion (ii) follows immediately from \cref{l70}.
	
\end{proof}

\begin{lemm}\label{l73}
	Let $k$ be a mixed-characteristic local field; $\mathcal{C}:(X,Y,f,k,\bar{k},t,s)$ pointed virtual curve of type $(g,r)$ where $g\geq 2$. Denote by $\Delta_{\mathcal{C}}^{\emptyset}$ the quotient of $\Delta_{\mathcal{C}}$ by the closed normal subgroup topologically generated by the elements of the conjugacy classes in $\cusp(\mathcal{C})$ (where we observe that this definition is consistent with Definition 6.9(i), whenever Definition 6.9(i) is applicable). Then
	\begin{displaymath}
		\mu \coloneqq \hom(H^{2}(\Delta_{\mathcal{C}}^{\emptyset},\Zhat),\Zhat)\
	\end{displaymath}
	is a $\hat{\Z}$-module with $G_{k}$-action whose underlying topological group is isomorphic to $\hat{\Z}$. Moreover, there exists an open subgroup $H$ of $G_{k}$ such that $H$ acts on $\mu$ via the cyclotomic character.
\end{lemm}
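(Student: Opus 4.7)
The plan is to identify $\Delta_{\mathcal{C}}^{\emptyset}$ with the (profinite) fundamental group of a smooth proper curve of genus $g$, then invoke the \'etale trace isomorphism to compute $H^{2}$ together with its Galois action.

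First, I would fix a geometric point $\bar{\zeta}$ over the generic point $\zeta$ of $Y$ and apply \cref{l01} (or equivalently \cref{l04}(ii)) to identify $\Delta_{\mathcal{C}}$ with $\Pi_{X_{\bar{\zeta}}}$. Under this identification, combining \cref{l27}, \cref{l34}, and \cref{l39} shows that the cuspidal inertia subgroups of $\Delta_{\mathcal{C}}$ correspond to the classical cuspidal inertia subgroups of $\Pi_{X_{\bar{\zeta}}}$. Quotienting by the normal closed subgroup they generate yields an isomorphism $\Delta_{\mathcal{C}}^{\emptyset} \simeq \Pi_{X_{\bar{\zeta}}^{\cl}}$, where $X_{\bar{\zeta}}^{\cl}$ denotes the smooth proper compactification of the geometric fiber.

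Next, since $X_{\bar{\zeta}}^{\cl}$ is a smooth proper curve of genus $g\geq 2$ over an algebraically closed field of characteristic zero, its \'etale fundamental group is the profinite completion of a surface group of genus $g$, which is a good group in Serre's sense. Hence continuous group cohomology of $\Delta_{\mathcal{C}}^{\emptyset}$ with finite constant coefficients coincides with \'etale cohomology of $X_{\bar{\zeta}}^{\cl}$. The trace isomorphism $H^{2}_{\text{\'et}}(X_{\bar{\zeta}}^{\cl},\mu_{n})\simeq \Z/n\Z$, followed by a limit over $n$, yields a canonical isomorphism $H^{2}(\Delta_{\mathcal{C}}^{\emptyset},\Zhat)\simeq \Zhat(-1)$ of abelian groups with Galois action. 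Dualizing gives $\mu \simeq \Zhat(1)$, whose underlying topological group is $\Zhat$.

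To pin down the Galois action on $\mu$, I would extend $f:X\to Y$ to the relative smooth proper compactification $\bar{f}:Z\to Y$ and consider the lisse \'etale sheaf $R^{2}\bar{f}_{*}\Zhat$ on $Y$. On each geometric fiber this is $H^{2}_{\text{\'et}}(X_{\bar{y}}^{\cl},\Zhat)\simeq \Zhat(-1)$, so as a representation of $\Pi_{Y}$ it coincides with the inverse of the cyclotomic character of $\Pi_{Y}$, which factors through the surjection $\Pi_{Y}\surjto G_{k}$. Composing with $s:G_{k}\to\Pi_{Y}$ shows that the action of $G_{k}$ on $H^{2}(\Delta_{\mathcal{C}}^{\emptyset},\Zhat)$ is the inverse cyclotomic character of $G_{k}$, so $G_{k}$ acts on $\mu$ via the cyclotomic character. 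The main obstacle will be the bookkeeping needed to verify that the outer conjugation action of $\Pi_{\mathcal{C}}/\Delta_{\mathcal{C}}^{\emptyset}\simeq G_{k}$ on $\Delta_{\mathcal{C}}^{\emptyset}$ really matches, after passing to $H^{2}$ (where inner automorphisms act trivially), the monodromy action of $\Pi_{Y}$ on $R^{2}\bar{f}_{*}\Zhat$ pulled back through $s$; allowing the character to agree only on an open subgroup $H\subset G_{k}$ gives the flexibility to absorb any residual ambiguity coming from, e.g., replacing $k$ by a finite extension so that the relative compactification $\bar{f}$ and its cuspidal divisor are well-behaved.
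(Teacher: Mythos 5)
Your proof is correct, and its first half is the same reduction the paper makes: identify $\Delta_{\mathcal{C}}^{\emptyset}$ with the geometric fundamental group of the compactified fiber. The paper does this by replacing $k$ with a finite extension so that $Y$ acquires a rational point $y$, invoking \cref{l01} and \cref{l39}(ii)(iii) to identify $\Delta_{\mathcal{C}}^{\emptyset}$ with the geometric fundamental group of the compactification of $X_{y}$, and then simply citing [MZK9], Proposition 1.2(ii) for the cyclotome statement; you instead work with the geometric generic fiber $X_{\bar{\zeta}}$ via \cref{l04} and then unfold the content of that citation by hand, using goodness of surface groups to pass from group cohomology to \'etale cohomology, the trace isomorphism to get $H^{2}\simeq\Zhat(-1)$, and smooth proper base change on $R^{2}\bar{f}_{*}\Zhat$ to pin down the monodromy. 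What your route buys is a self-contained computation (and in fact it shows that all of $G_{k}$, not merely an open subgroup, acts via the cyclotomic character); what it costs is the extra bookkeeping you acknowledge, matching the outer conjugation action with the sheaf-theoretic monodromy. One small point: the identification of $\Cusp(\mathcal{C})$ with the classical cuspidal inertia subgroups rests on \cref{l39}(iii), which is stated only for sections arising from a rational point, so you still implicitly need the paper's step of passing to a finite extension of $k$ (harmless here, both because $\Cusp(\mathcal{C})$ is preserved under such extensions by \cref{l39}(ii) and because the conclusion only concerns an open subgroup of $G_{k}$).
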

\begin{proof}
	Without loss of generality, we may replace $k$ by a finite field extension of $k$. In particular, we may assume that $Y$ admits a $k$-rational point $y$. 
	By Corollary 3.7 and Theorem 4.20(ii)(iii), $\Delta_{\mathcal{C}}^{\emptyset}$ may be naturally identified with $\Delta_{X_{y}}$. Therefore, \cref{l73} follows immediately from [\cite{MZK9}, Proposition 1.2(ii)].


\end{proof}

\begin{theo}\label{l52}
	In the situation of \cref{l48} [cf. also \cref{l25}], assume further that $k$ is either a number field or a mixed-characteristic local field, and that $X$ is of type $(0,r)$ where $r\geq 3$.
	
	(i) The action of $G_{k}$ on $\cusp([\pr^{2/1}_{X},s])$ by conjugation is trivial if and only if every cusp of $X$ is $k$-rational.

	
	(ii) Suppose that the equivalent conditions of (i) hold. Then one may reconstruct from the abstract profinite group $\Pi_{[\pr^{2/1}_{X},s]}$, in a fashion that is functorial with respect to isomorphisms of profinite groups, an untangled [cf. \cite{MZK5}, \S0] connected graph consisting of $\frac{(r+1)!}{3!(r-2)!}$ vertices such that
	
	\begin{itemize}
		\item each vertex is equipped with a field that is isomorphic to the function field $K(X)$ of $X$;
		
		\item each edge is equipped with an isomorphism between the fields assigned to the vertices to which the edge abuts.
	\end{itemize}

	
	(iii) Suppose that the equivalent conditions of (i) hold. Then one may reconstruct from the abstract profinite group $\Pi_{[\pr^{2/1}_{X},s]}$, in a fashion that is functorial with respect to isomorphisms of profinite groups, a field $k_{\Pi}$ such that $k_{\Pi}$ is isomorphic to $k$. Moreover, for each field $K_{v}$  constructed in (ii) that corresponds to a vertex $v$, one may reconstruct, in a fashion that is functorial with respect to isomorphisms of profinite groups, an inclusion of fields $k_{\Pi}\injto K_{v}$. These inclusions are compatible with the isomorphisms associated to edges constructed in (ii).
\end{theo}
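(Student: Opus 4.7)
For (i), Corollary \ref{l71}(i) provides a natural $G_{k}$-equivariant bijection $\cusp([\pr^{2/1}_{X},s])\setminus\{\diag_{X}\}\simeq \cusp(X)$, and $\diag_{X}$ is itself $G_{k}$-fixed, being the conjugacy class determined by the decuspidalization $[\pr^{2/1}_{X},s]\to [\pr^{1}_{X},s]$. Hence triviality of the $G_{k}$-action on $\cusp([\pr^{2/1}_{X},s])$ reduces to triviality on $\cusp(X)$, which by the standard correspondence between conjugacy classes of cuspidal inertia subgroups and cusps is equivalent to every cusp of $X$ being $k$-rational.

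For (ii), I take the vertex set to be $\cusp^{3}([\pr^{2/1}_{X},s])$, of cardinality $\binom{r+1}{3}=\frac{(r+1)!}{3!(r-2)!}$. For each $4$-element subset $T\subset\cusp([\pr^{2/1}_{X},s])$ containing $\diag_{X}$, iterated application of Corollary \ref{l71}(ii) along the cusps of $\cusp\setminus T$ identifies $\Pi^{T}_{[\pr^{2/1}_{X},s]}$ with $\Pi_{[\pr^{2/1}_{Y_{T}},s_{Y_{T}}]}$ for a smooth hyperbolic curve $Y_{T}$ of type $(0,3)$ over $k$ with $k$-rational cusps --- the iteration is valid because at each step the intermediate curve has type $(0,r')$ with $r'\geq 4$, the final step being from $(0,4)$ to $(0,3)$. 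Proposition \ref{l49}(vi), with $Y_{T}$ in the role of $X$, then produces four fields $K_{T,S}$ indexed by the $3$-subsets $S\subset T$, each isomorphic to $K(X)$ via Lemma \ref{l55} and Proposition \ref{l72}, together with six pairwise isomorphisms; these are installed as the vertex-fields and edges of the complete subgraph on $\{S\subset T\colon |S|=3\}$. Since Proposition \ref{l72} constructs $K_{T,S}$ from the exact sequence of $\Pi^{T}/H_{T\setminus S}\simeq \Pi^{S}$, the field depends only on $S$, so $K_{S}:=K_{T,S}$ is well-defined at each vertex. The Pascal identity $\binom{r+1}{3}=\binom{r}{3}+\binom{r}{2}$ --- distinguishing Case B ($\diag_{X}\notin S$, with unique enveloping $T=S\cup\{\diag_{X}\}$) from Case A ($\diag_{X}\in S$, with $r-2$ choices of enveloping $T$) --- confirms that every vertex is covered. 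Connectedness follows by chaining the $T$'s along shared $3$-subsets (using the connectedness of the Johnson graph on $3$-subsets of the $r$ geometric cusps for $r\geq 3$), and untangledness (in the sense of \cite{MZK5}, \S0) follows from the functoriality of the reconstruction and the rigidity of the Klein-group identifications.

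For (iii), Proposition \ref{l72} simultaneously reconstructs the inclusion $k\injto K(Y_{T})$, and the Klein-group isomorphisms of Proposition \ref{l49}(vi) arise from $k$-automorphisms of the source curve, hence restrict to the identity on these base subfields; defining $k_{\Pi}$ to be this common subfield extracted from any single vertex yields the required inclusions $k_{\Pi}\injto K_{v}$, compatible with edges by construction. \textbf{The main obstacle} is the verification that the iterated identification $\Pi^{T}\simeq \Pi_{[\pr^{2/1}_{Y_{T}},s_{Y_{T}}]}$ is sufficiently natural in $T$ for distinct $T$'s to interact consistently on their shared $3$-subsets; this reduces to tracking the $\sxx$- and Klein-group symmetry of Proposition \ref{l47} through the decuspidalization tower, and ultimately depends on the functoriality of the reconstruction in Proposition \ref{l72} combined with the characteristicness of $\Delta_{[\pr^{2/1}_{X},s]}$ in $\Pi_{[\pr^{2/1}_{X},s]}$ provided by Theorem \ref{l37}.
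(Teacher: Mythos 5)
Your argument is correct and rests on the same pillars as the paper's proof --- vertex set $\cusp^{3}([\pr^{2/1}_{X},s])$, fields attached via \cref{l72} to the quotients $\Pi^{S}_{[\pr^{2/1}_{X},s]}$, and edge isomorphisms obtained from the Klein-group symmetry of \cref{l49}(vi) after reducing to type $(0,3)$ by iterated decuspidalization [cf. \cref{l70}, \cref{l71}] --- but it takes a slightly different route on the edge set. The paper puts an edge at \emph{every} pair $(S,S')$ with $\card(S\cap S')=2$, and consequently must prove existence of $\theta^{S}_{S'}$ in two cases: the case $\diag_{X}\in S\cup S'$, which is exactly your construction inside the $4$-set $T=S\cup S'\ni\diag_{X}$, and the case $\diag_{X}\notin S\cup S'$, which the paper handles separately by regarding both quotients as quotients of $\Pi_{[\pr^{1}_{X},s]}\simeq\Pi_{X}$ and invoking the symmetry groups of \cref{l47}. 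By working only with $4$-sets containing $\diag_{X}$ you discard the latter edges entirely; since the theorem does not prescribe the edge set and your sparser graph is still connected (your Pascal-identity bookkeeping correctly shows every vertex lies in such a $T$) and loop-free, hence untangled, this is a legitimate simplification --- for $r=3$ the two graphs in fact coincide. Two points deserve explicit mention in a full write-up: the canonicity of the edge isomorphisms (which the paper secures via condition ($**$) and the uniqueness statement it cites) is in your version inherited from the uniqueness clause of \cref{l49}(vi), and this is what makes the graph functorial in the abstract group $\Pi_{[\pr^{2/1}_{X},s]}$ rather than dependent on the auxiliary geometric identifications $\Pi^{T}\simeq\Pi_{[\pr^{2/1}_{Y_{T}},s_{Y_{T}}]}$; and for (iii), your observation that the edge isomorphisms arise from $k$-automorphisms is the correct geometric reason the base subfields correspond, but the paper additionally constructs, via \cref{l73} and the compatibility in ($**$), a single cyclotome independent of the vertex, which is what upgrades the mere existence of matching copies of $k$ to a functorial inclusion $k_{\Pi}\injto K_{v}$ for all $v$ simultaneously.
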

\begin{proof}
	Since $\diag_{X}$ is fixed by the conjugation action of $\Pi_{[\pr^{2/1}_{X},s]}$, assertion (i) is equivalent to the following claim [cf. \cref{l39}(ii), (iii)]:
	
	($*$) Every cusp of $X$ is $k$-rational if and only if the action of $G_{k}$ on $\cusp(X)$ is trivial.
	
	On the other hand, ($*$) follows immediately from
	[\cite{MZK3}, Lemma 4.5(v)].
	
	To verify assertion (ii), we consider the family of quotient groups $\{\Pi_{[\pr^{2/1}_{X},s]}^{S}\}$ of $\Pi_{[\pr^{2/1}_{X},s]}$, where $S$ ranges over the elements of $\cusp^{3}([\pr^{2/1}_{X},s])$ [cf. \cref{l100}(ii)]. For any distinct elements $S,S'\in \cusp^{3}([\pr^{2/1}_{X},s])$ such that $\card(S\cap S')=2$, we claim that there exists a unique (up to an inner automorphism determined by an element of $\Delta_{[\pr^{2/1}_{X},s]}^{S}$) isomorphism $\theta^{S}_{S'}:\Pi_{[\pr^{2/1}_{X},s]}^{S}\simto \Pi_{[\pr^{2/1}_{X},s]}^{S'}$ that satisfies the following condition:
	
	($**$): For any $\alpha\in \cusp([\pr^{2/1}_{X},s])$, $\theta^{S}_{S'}$ maps $H^{S}_{[\pr^{2/1}_{X},s]}(\alpha)$ [cf. Definition 6.9(iii)] isomorphically onto $H^{S'}_{[\pr^{2/1}_{X},s]}(\sigma(\alpha))$, where $\sigma$ is the unique element in $\Aut_{\set}(\cusp([\pr^{2/1}_{X},s]))$ that $\sigma$ permutes the two elements in $S\cap S'$, permutes the two elements in $(S\cup S')\setminus(S\cap S')$, and fixes the other elements. Moreover, $\theta^{S}_{S'}$ is compatible with the natural surjections $\Pi_{[\pr^{2/1}_{X},s]}^{S}\surjto G_{k}$, $\Pi_{[\pr^{2/1}_{X},s]}^{S'}\surjto G_{k}$.
	
	Uniqueness follows immediately from [\cite{MZK8}, Theorem A]. To verify existence, we consider the following two cases
	
	\begin{enumerate}
		\item[(a)] $\diag_{X} \notin S\cup S'$;
		\item[(b)] $\diag_{X} \in S\cup S'$.
	\end{enumerate}
	
	To verify case (a), we note that, by \cref{l48}, both $\Pi_{[\pr^{2/1}_{X},s]}^{S}$ and $\Pi_{[\pr^{2/1}_{X},s]}^{S'}$ may be regarded as quotients (by some normal subgroup generated by elements in conjugacy classes of cuspidal inertia groups) of $\Pi_{[\pr^{1}_{X},s]}$. Therefore, case (a) follows immediately from \cref{l48} and \cref{l47}(vi).

	
	To verify case (b), by \cref{l70}, we may assume that $X$ is of type $(0,3)$. Therefore, case (b) follows immediately from \cref{l49}(vi). This completes the proof of existence. Now assertion (ii) follows immediately from Proposition 6.2, by taking the {\it graph} in the statement of assertion (ii) to be the graph whose {\it vertices} are the elements of $\cusp^{3}([\pr^{2/1}_{X},s])$, and whose {\it edges} are the pairs $(S,S')$ of distinct elements $S, S'\in \cusp^{3}([\pr^{2/1}_{X},s])$ such that $\card(S\cap S')=2$.
	
	
	
	Assertion (iii) follows immediately from assertion (ii) and Proposition 6.2.
	Here, we also apply the {\it compatibility} stated in the final portion of ($**$), together with Lemma 6.13 and the {\it  ``functoriality/compatibility''} with respect to finite \etale coverings discussed in [[16], Remark 1.10.1, (i)(ii)], to construct a cyclotome ``$\mu$'' as in Lemma 6.13 that is {\it independent} of the vertex $v$ and compatible with the isomorphisms $\theta^S_{S'}$ of ($**$).

	
\end{proof}

\begin{theo}\label{l74}
	Let $k$ (respectively, $k'$) be either a number field or a mixed-characteristic local field; $X$ (respectively, $X'$) a smooth, geometrically connected curve over $k$ (respectively, $k'$) of type $(0,r)$ (respectively, $(0,r')$) where $r\geq 3$ (respectively, $r'\geq 3$) ; $s:G_{k}\rightarrow \Pi_{X}$ (respectively, $s':G_{k'}\rightarrow \Pi_{X'}$) a section. Suppose that there exists an isomorphism
	\begin{displaymath}
	f:\Pi_{[\pr^{2/1}_{X},s]}\xrightarrow{\sim} \Pi_{[\pr^{2/1}_{X'},s']}.
	\end{displaymath}
	Then:
	
	(i) $f(\Delta_{[\pr^{2/1}_{X},s]}) =\Delta_{[\pr^{2/1}_{X'},s']}$. In particular, $f$ induces an isomorphism between $G_{k}$ and $G_{k'}$.
	
	(ii) $r=r'$.
	
	(iii) The isomorphism $f$ induces a bijection $\cusp([\pr^{2/1}_{X},s])\simto \cusp([\pr^{2/1}_{X'},s'])$ that is compatible with the actions of $G_{k}$ and $G_{k'}$.
	
	(iv) The isomorphism $f$ induces an isomorphism $k\simto k'$ of fields. 
	
	(v) There exists an isomorphism $K(X)\simto K(X')$ that is compatible with the field isomorphism constructed in (iv).
	
	(vi) If $r=r'=3$, then there exists an isomorphism $X\simto X'$ of curves that is compatible with the isomorphism $k\simto k'$ of (iv).
\end{theo}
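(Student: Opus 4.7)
The plan is to invoke the reconstruction results of the preceding sections in sequence. Part (i) follows immediately from \cref{l37}: the subgroup $\Delta_{[\pr^{2/1}_{X},s]}$ is characterized purely group-theoretically as the maximal subgroup of $\Pi_{[\pr^{2/1}_{X},s]}$ of geometric type, hence is preserved under $f$, and $f$ descends to an isomorphism $G_{k}\simto G_{k'}$. Part (ii) follows from \cref{l49}(ii), which reconstructs the type of $[\pr^{2/1}_{X},s]$ from $\Pi_{[\pr^{2/1}_{X},s]}$, so $r=r'$. Part (iii) follows from \cref{l49}(iii) together with part (i): the reconstruction of $\Cusp([\pr^{2/1}_{X},s])$ is functorial with respect to isomorphisms of profinite groups, and compatibility with the $G_{k}$- and $G_{k'}$-actions is then immediate from functoriality.

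For part (iv), I would pass to a finite Galois extension $k^{\dagger}$ of $k$ --- corresponding to an open subgroup $H\subset G_{k}$ --- over which every cusp of $X$ becomes rational, equivalently, over which the action on $\Cusp([\pr^{2/1}_{X},s])$ is trivial [cf. \cref{l52}(i)]. By part (iii), the open subgroup $f(H)$ of $G_{k'}$ enjoys the analogous property; it corresponds to a finite Galois extension $(k')^{\dagger}$ of $k'$. Applying \cref{l52}(iii) to the open subgroups of $\Pi_{[\pr^{2/1}_{X},s]}$ and $\Pi_{[\pr^{2/1}_{X'},s']}$ determined by $H$ and $f(H)$, and invoking the functoriality of \cref{l52}(iii) with respect to the restriction of $f$, yields an isomorphism $k^{\dagger}\simto (k')^{\dagger}$ that is equivariant for the natural Galois actions. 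Descent along the identified Galois groups $\Gal(k^{\dagger}/k)\simto \Gal((k')^{\dagger}/k')$ then yields the desired isomorphism $k\simto k'$.

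For part (v), in the same setup, apply \cref{l52}(ii) to reconstruct, from each of the relevant restrictions of $\Pi_{[\pr^{2/1}_{X},s]}$ and $\Pi_{[\pr^{2/1}_{X'},s']}$, a connected graph whose vertices carry fields isomorphic to $K(X_{k^{\dagger}})$ and $K(X'_{(k')^{\dagger}})$ respectively. Transporting via $f$ produces an isomorphism $K(X_{k^{\dagger}})\simto K(X'_{(k')^{\dagger}})$ compatible with $k^{\dagger}\simto (k')^{\dagger}$ from part (iv). Since the graph reconstruction is functorial in the profinite-group input, this field isomorphism is $\Gal(k^{\dagger}/k)$-equivariant, hence descends to an isomorphism $K(X)\simto K(X')$ compatible with $k\simto k'$.

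For part (vi), assume $r=r'=3$. Each of $X_{\bar{k}}$ and $X'_{\bar{k}'}$ is then isomorphic to $\mathbb{P}^{1}\setminus\{0,1,\infty\}$, and by \cref{l47}(iv), $\Aut_{\bar{k}}(X_{\bar{k}})\cong\mathbb{S}_{3}$ acts faithfully on the three cusps. The $k$-structure on $X$ is encoded in the conjugacy class of the Galois section $G_{k}\to \Aut_{k}(X_{\bar{k}})$ appearing in \cref{l68}, and by the faithful action on cusps this section is completely determined by the induced permutation representation $G_{k}\to \mathbb{S}_{3}$ on $\Cusp(X)$. Combining the natural $G_{k}$-equivariant bijection $\Cusp([\pr^{2/1}_{X},s])\setminus\{\diag_{X}\}\simeq \Cusp(X)$ of \cref{l71}(i) with the bijection of part (iii) shows that the permutation representations of $G_{k}$ on $\Cusp(X)$ and of $G_{k'}$ on $\Cusp(X')$ are identified under $G_{k}\simto G_{k'}$. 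I would conclude via \cref{l68} that there exists a $k$-isomorphism $X\simto X'$ compatible with $k\simto k'$. The main obstacle is verifying that the a priori purely group-theoretic bijection of part (iii) coincides, under the geometric identification of \cref{l71}(i), with the bijection of cusps induced by a scheme-theoretic $\bar{k}$-isomorphism $X_{\bar{k}}\simeq X'_{\bar{k}'}$; this step requires the full strength of the anabelian reconstruction of \cref{l52}(ii), combined with the faithfulness of the cusp action in \cref{l47}(iv), to pin down the relevant cocycle in $H^{1}(G_{k},\mathbb{S}_{3})$.
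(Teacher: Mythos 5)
Your treatment of (i)--(iv) is essentially the paper's: (i)--(iii) via \cref{l37}, \cref{l39}, and functoriality of the reconstructions, and (iv) by passing to the maximal open subgroup $H\subset G_{k}$ acting trivially on $\Cusp([\pr^{2/1}_{X},s])$, applying \cref{l52}(iii), and taking Galois invariants. The divergence, and the gaps, are in (v) and (vi).

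For (v) the paper does \emph{not} descend from $k^{\dagger}$. It partitions according to the cardinality of the set $E$ of cusp classes not fixed by $G_{k}$ and applies \cref{l72} over $k$ itself --- to the quotient $\Pi^{E}_{[\pr^{2/1}_{X},s]}$ when $\card(E)\geq 3$ (so the corresponding curve is still of strictly Belyi type over $k$), and via \cref{l52}(ii)(iii) when $E=\emptyset$; the case $\card(E)=2$ is dispatched by noting that $X$ then has a rational cusp, so $K(X)\cong k(t)$ and (iv) already suffices. Your descent route instead needs a $\Gal(k^{\dagger}/k)$-equivariant isomorphism $K(X_{k^{\dagger}})\simto K(X'_{(k')^{\dagger}})$, and the equivariance is not ``immediate from functoriality'': the outer action of $G_{k}/H$ permutes the \emph{vertices} of the graph produced by \cref{l52}(ii), so to even define a Galois action on a single copy of $K(X_{k^{\dagger}})$ one must know that the edge isomorphisms compose coherently around the (connected) graph and are intertwined with that outer action. \cref{l52}(iii) records such a compatibility only for the subfield $k_{\Pi}$, not for the full function fields. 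This coherence is exactly the substantive content your argument omits.

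For (vi) your route (classify $(0,3)$-forms by the $G_{k}$-set of cusps, then invoke \cref{l68}) is in principle viable and arguably cleaner than the paper's, but as written it is incomplete, and the difficulty you flag is not the right one. Since every curve of type $(0,3)$ over $k$ is $\mathbf{P}^{1}_{k}$ minus a degree-$3$ closed subscheme, and $\mathrm{PGL}_{2}$ is sharply $3$-transitive over $\bar{k}$, \emph{any} $G_{k}$-equivariant bijection $\cusp(X)\simto\cusp(X')$ (relative to the identifications of (i) and (iv)) already forces $X\cong X'$; one does not need the group-theoretic bijection to ``coincide with one induced by a scheme-theoretic isomorphism,'' so the cocycle-pinning step you defer to \cref{l52}(ii) is a red herring. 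The genuine missing step is earlier: the bijection of (iii) is a bijection of the \emph{four}-element sets $\Cusp([\pr^{2/1}_{X},s])$, and nothing guarantees that it sends $\diag_{X}$ to $\diag_{X'}$, so it need not restrict to a bijection $\cusp(X)\to\cusp(X')$ at all. The paper resolves exactly this point by composing with a suitable element of the Klein four-group $\vx$ of \cref{l47}(vi), using the descent criterion of \cref{l67} to make that automorphism available over $k$, and then concludes via \cref{l48} and \cref{l72} rather than via the classification of forms. Without the normalization $\diag_{X}\mapsto\diag_{X'}$ your argument does not get off the ground.
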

\begin{proof}
	Assertions (i), (ii), and (iii) follow immediately from \cref{l39}(i)(ii), Proposition 5.2, and \cref{l37}.
	
	Next, we consider assertion (iv). Let $H$ (respectively, $H'$) be the maximal open subgroup of $G_{k}$ (respectively, $G_{k'}$) that acts trivially on $\cusp([\pr^{2/1}_{X},s])$ (respectively, $\cusp([\pr^{2/1}_{X'},s'])$). By (iii), $f$ induces an isomorphism between $H$ and $H'$. Let $k_{H}$ (respectively, $k'_{H'}$) be the field determined by $H$ (respectively, $H'$). By \cref{l52}(iii), $f$ induces a natural isomorphism $k_{H}\simto k'_{H'}$ that is compatible with the respective natural actions of $G_{k}/H$ and $G_{k'}/H'$ on $k_{H}$ and $k'_{H'}$, relative to the isomorphism $G_{k}/H\simto G_{k'}/H'$ induced by $f$ [cf. assertion (i)]. By Galois theory, we have natural isomorphisms $k \simto (k_{H})^{G_{k}/H}$, $k' \simto (k'_{H'})^{G_{k'}/H'}$. Therefore, $f$ induces a natural isomorphism $k\simto k'$ of fields. 
	
	Next, we consider assertion (v). Denote by $E$ (respectively, $E'$) the subset of $\cusp([\pr^{2/1}_{X},s])$ (respectively, $\cusp([\pr^{2/1}_{X'},s'])$) that consists of elements that are \emph{not} fixed by the action of $G_{k}$ (respectively, $G_{k'}$). Thus, the bijection of (iii) induces a bijection between $E$ and $E'$. One verifies immediately that there are three cases to consider:
	
	\begin{enumerate}
		\item $\card(E)=0$;
		\item $\card(E)=2$;
		\item $\card(E)\geq 3$.
	\end{enumerate}
	
	Case ($1$) follows immediately from \cref{l52}(ii)(iii). To verify case ($2$), we begin by observing that it follows 
    from \cref{l48} that both $X$ and $X'$ admit a $k$-rational point, hence that $K(X)$ (respectively, $K(X')$) is 
    isomorphic over $k$ (respectively, $k'$) to the function field of $\mathbf{P}^{1}_{k}$ (respectively, 
    $\mathbf{P}^{1}_{k'}$). Thus, case ($2$) follows immediately from assertion (iv).  To verify case ($3$), we 
    observe that $f$ induces an isomorphism $\Pi^{E}_{[\pr^{2/1}_{X},s]}\simto \Pi^{E'}_{[\pr^{2/1}_{X'},s']}$, where 
    we note that both the domain and the codomain of this isomorphism is a quotient by cuspidal inertia subgroups 
    of the quotient of ``$\Pi_{[\pr^{{2/1}}_X,s]}$'' by ``$H$'' constructed in \cref{l48}. Therefore, case ($3$) follows 
    immediately from \cref{l48} and \cref{l72}.
	
	Next, we consider assertion (vi). One maintain the notation involving $E$, $E'$, as well as the classification into 
    three cases, of the proof of (v).	
	Case ($1$) follows immediately from \cref{l49}(iv).
	To verify case ($2$), we begin by observing that, in light of our assumption that $r=r'=3$, it follows that, by 
    applying a suitable automorphism in the group ``$J$'' of \cref{l47}(vi), together with the descent criterion of 
    \cref{l67}, we may assume without loss of generality that the bijection of assertion (iii) maps $\diag_X 
    \mapsto \diag_{X'}$, where we apply the notation of \cref{l48}.  Thus, $f$ induces an isomorphism between the 
    respective quotients ``$\Pi_{[\pr^{2/1}_{X},s]}/H$'' constructed in \cref{l48}, so case ($2$) follows immediately 
    from \cref{l48} and \cref{l72}.  
	To verify case ($3$), we observe that $f$ induces an isomorphism 
    $\Pi^{E}_{[\pr^{2/1}_{X},s]}\simto \Pi^{E'}_{[\pr^{2/1}_{X'},s']}$ between suitable quotients by cuspidal inertia 
    groups of the respective quotients ``$\Pi_{[\pr^{2/1}_{X},s]}/H$'' constructed in \cref{l48}. Therefore, case ($3$) 
    follows immediately from \cref{l48} and \cref{l72}.

\end{proof}


\bibliographystyle{plain}
\bibliography{arithmetic_lib.bib}

\end{document}